\newtheorem{thm}{Theorem}[section]
\newtheorem{conj}[thm]{Conjecture}
\newtheorem{cor}[thm]{Corollary}
\newtheorem{lem}[thm]{Lemma}
\newtheorem{prop}[thm]{Proposition}
\newtheorem{defin}[thm]{Definition}
\def\cC{{\mathcal C}}
\def\cF{{\mathcal F}}
\def\s{{\mathfrak s}}
\def\bF{{\mathbb F}}
\def\O{{\mathcal O}}
\def\Q{{\mathbb Q}}
\def\bR{{\mathbb R}}
\def\Z{{\mathbb Z}}
\def\L{{\langle}}
\def\R{{\rangle}}
\def\Char{{\mathrm{Char}}}
\def\spc{{\mathrm{spin^c}}}
\def\Spc{{\mathrm{Spin^c}}}
\def\Short{{\mathrm{Short}}}
\def\cut{{\textup{cut}}}
\def\cyc{{\textup{cyc}}}
\def\del{{\partial}}
\def\disc{{\textup{disc}}}
\def\im{{\text{im}}}
\def\mod{{\textup{mod} \;}}
\def\rk{{\mathrm{rk}}}
\def\span{{\textup{span}}}
\def\supp{{\textup{supp}}}
\newcommand{\into}{\hookrightarrow}
\begin{document}

\title[Lattices, graphs, and Conway mutation]%
{Lattices, graphs, and Conway mutation}

\author[Joshua Evan Greene]{Joshua Evan Greene}

\address{Department of Mathematics, Columbia University\\ New York, NY 10027}

\thanks{Partially supported by an NSF Postdoctoral Fellowship.}

\email{josh@math.columbia.edu}

\maketitle

\begin{abstract}

The $d$-invariant of an integral, positive definite lattice $\Lambda$ records the minimal norm of a characteristic covector in each equivalence class $(\mod 2\Lambda)$.   We prove that the 2-isomorphism type of a connected graph is determined by the $d$-invariant of its lattice of integral cuts (or flows).  As an application, we prove that a reduced, alternating link diagram is determined up to mutation by the Heegaard Floer homology of the link's branched double-cover.  Thus, alternating links with homeomorphic branched double-covers are mutants.

\end{abstract}





\section{Introduction.}

Conway mutation has been in the news a lot lately.  Given a sphere $S^2$ that meets a link $L \subset S^3$ transversely in four points, cut along it and reglue by an involution that fixes a pair of points disjoint from $L$ and permutes $S^2 \cap L$.  This process results in a new link $L' \subset S^3$, and a pair of links are called {\em mutants} if they are related by a sequence of such transformations.  An analogous definition of Conway mutation applies to link diagrams.


A fundamental question about any link invariant is whether it can distinguish mutants.  One such invariant is the homeomorphism type of the space $\Sigma(L)$, the double-cover of $S^3$ branched along $L$.  As first noted by Viro, mutant links possess homeomorphic branched double-covers \cite[Thm.1]{viro:mutation}, \cite[Prop.3.8.2]{kawauchi:book}.  It follows that any invariant of branched double-covers will not distinguish mutants either.  However, non-mutant links can possess homeomorphic branched double-covers, such as the pretzel knot $P(-2,3,7)$ and the torus knot $T(3,7)$.  It remains an intriguing open problem to classify distinct links with homeomorphic branched double-covers \cite[Probs.1.22\&3.25]{kirby:problems}.

Our purpose here is to show that within the class of {\em alternating} links, the Heegaard Floer homology of the branched double-cover provides a {\em complete} invariant for the mutation type.

\begin{thm}\label{t: main}

Given a pair of connected, reduced alternating diagrams $D, D'$ for a pair of links $L,L'$, the following assertions are equivalent:

\begin{enumerate}

\item $D$ and $D'$ are mutants;

\item $L$ and $L'$ are mutants;

\item $\Sigma(L) \cong \Sigma(L')$; and

\item $\widehat{HF}(\Sigma(L)) \cong \widehat{HF}(\Sigma(L'))$, as absolutely graded, relatively $\spc$-graded groups.

\end{enumerate}

\end{thm}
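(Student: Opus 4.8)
The plan is to establish the cycle of implications $(1)\Rightarrow(2)\Rightarrow(3)\Rightarrow(4)\Rightarrow(1)$, three of whose steps are immediate or classical. A Conway mutation of the diagram $D$ is carried out along a sphere meeting $L$ transversely in four points, so it is in particular a mutation of the link $L$; this gives $(1)\Rightarrow(2)$. Viro's observation, recalled in the introduction, that mutant links have homeomorphic branched double-covers gives $(2)\Rightarrow(3)$. And $(3)\Rightarrow(4)$ holds because $\widehat{HF}$, equipped with its absolute $\Q$-grading and relative $\spc$-grading, is an invariant of the oriented homeomorphism type of a closed $3$-manifold. The entire content of the theorem therefore lies in $(4)\Rightarrow(1)$, which I would prove by routing through the lattice of integral cuts of a Tait graph.

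First I would translate $\widehat{HF}(\Sigma(L))$ into lattice data. Let $G$ be a Tait (checkerboard) graph of the connected, reduced alternating diagram $D$, chosen so that its Goeritz form is positive definite --- possible precisely because $D$ is alternating --- noting that this Goeritz lattice is exactly the lattice $\Cut(G)=:\Lambda_D$ of integral cuts of $G$, equivalently the lattice of integral flows of the planar dual $G^*$; and the hypothesis that $D$ be reduced guarantees that $G$ has no loops or bridges. By the theorem of Ozsv\'ath and Szab\'o on the Heegaard Floer homology of branched double-covers of alternating links, $\Sigma(L)$ is an $L$-space and, after a choice of orientation, bounds a negative-definite \emph{sharp} $4$-manifold with intersection lattice $-\Lambda_D$. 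Consequently $\widehat{HF}(\Sigma(L),\s)\cong\Z$ for every $\spc$ structure $\s$, so all of the content of $\widehat{HF}(\Sigma(L))$ sits in its absolute grading, the correction terms $d(\Sigma(L),\cdot)$; and sharpness computes these directly from $\Lambda_D$, matching --- up to an overall additive normalization --- the minimal norm of a characteristic covector of $\Lambda_D$ in the coset determined by $\s$. Thus the isomorphism class of $\widehat{HF}(\Sigma(L))$, as an absolutely graded, relatively $\spc$-graded group, is precisely the $d$-invariant of $\Lambda_D$, viewed up to affine isomorphism of its index torsor. Running the same reduction for $D'$, hypothesis $(4)$ becomes: $\Lambda_D$ and $\Lambda_{D'}$ have the same $d$-invariant.

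Next I would invoke the lattice-theoretic result that is the technical core of the paper --- that the $d$-invariant of $\Cut(G)$ (equivalently of $\Flow(G)$) determines the $2$-isomorphism type of a connected graph --- to conclude that $G$ and $G'$ are $2$-isomorphic. It then remains to transport this back to the diagrams. Here I would use Whitney's theorem that two connected graphs are $2$-isomorphic if and only if they are related by a finite sequence of vertex-amalgamations and -cleavings at cut vertices together with \emph{twists} along two-element vertex cuts. On the level of an alternating diagram, a twist of the Tait graph along a pair of checkerboard regions separating the diagram into two tangles is realized by rotating one tangle, i.e.\ by a Conway mutation, while the amalgamation and cleaving moves are realized by (de)composing the diagram along a separating region; conversely a Conway mutation of $D$ plainly induces one of these moves on the Tait graph. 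Concatenating the realizations produces a sequence of mutations carrying $D$ to $D'$ --- which is $(1)$ --- and closes the cycle.

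The genuinely hard input is the one I am taking on faith: the lattice rigidity theorem, whose proof occupies the bulk of the paper. Among the pieces one must verify here, the two delicate ones are (a) that the branched-double-cover $4$-manifold is actually sharp --- exactly where alternation is used, via Ozsv\'ath--Szab\'o --- so that $\widehat{HF}(\Sigma(L))$ records nothing finer than $\Lambda_D$; and (b) the precise correspondence between Whitney's graph moves and Conway mutations of alternating diagrams, where one must check in particular that the planar-reflection part of a mutation, invisible to the abstract Tait graph, does not obstruct realizing an arbitrary $2$-isomorphism by mutations.
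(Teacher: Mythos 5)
Your overall route is the same as the paper's: the easy cycle $(1)\Rightarrow(2)\Rightarrow(3)\Rightarrow(4)$, then Ozsv\'ath--Szab\'o to convert $\widehat{HF}(\Sigma(L))$ into the $d$-invariant of a Tait-graph lattice (the paper uses $\cF(G)$, you use the Goeritz/cut lattice; these agree up to planar duality, so this is cosmetic), then the lattice rigidity theorem to get a 2-isomorphism of Tait graphs, then graph theory to convert the 2-isomorphism into diagrammatic mutations.

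The genuine gap is in that last conversion. A 2-isomorphism, and Whitney's theorem expressing it by twists, amalgamations and cleavings, is a statement about the \emph{abstract} graphs $G,G'$, whereas $D$ and $D'$ correspond to specific \emph{plane drawings} $\Gamma,\Gamma'$. If you realize each Whitney move by a tangle rotation starting from $\Gamma$, you end with \emph{some} plane drawing of a graph abstractly isomorphic to $G'$, but a planar graph generally has many inequivalent embeddings in $S^2$, so there is no reason yet that this drawing is $\Gamma'$, i.e.\ that the resulting diagram is $D'$. Closing this requires the Mohar--Thomassen theorem that any two plane drawings of a 2-connected planar graph differ by flips (extended to connected graphs using swaps at cut vertices, Lemma \ref{l: swap}), combined with the explicit dictionary of Lemma \ref{l: graph mutation} showing that flips, planar switches, \emph{and} swaps of plane drawings are all realized by Conway mutations of the alternating diagram; this is exactly the content of Proposition \ref{p: flip}, Corollary \ref{c: flip and switch}, and Corollary \ref{c: mutant graphs}, and it occupies a nontrivial part of Section \ref{s: mutation}. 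You flag this issue in your caveat (b), but flagging it is not supplying it: as written, your argument proves only that $D'$ is a mutant of \emph{some} alternating diagram whose Tait graph is isomorphic to $G'$, not of $D$ itself. Relatedly, your proposal to realize the amalgamation/cleaving moves by ``(de)composing the diagram along a separating region'' is not itself a mutation; in the paper this cut-vertex phenomenon is absorbed by swaps, which are then shown to be mutations.
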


\noindent 

We derive Theorem \ref{t: main} from Theorem \ref{t: main graph preview} below, a combinatorial result.  We proceed to sketch the main line of argument and then discuss some repercussions of Theorem \ref{t: main}.


\subsection{From topology to combinatorics.}

As indicated in the abstract, the focus of the paper is primarily combinatorial in nature.  This is thanks to a description of the invariant $\widehat{HF}(\Sigma(L))$ for an alternating link $L$ due to Ozsv\'ath-Szab\'o, which we quickly review.

First, the manifold $\Sigma(L)$ is an L-space.  This means that the invariant $\widehat{HF}$ has rank one in each $\spc$ structure on $\Sigma(L)$, the set of which forms a torsor over $H^2(\Sigma(L);\Z)$.  Thus, the invariant is completely captured by its Heegaard Floer $d$-invariant, which for the case at hand is the mapping $d: \Spc(\Sigma(L)) \to \Q$ that records the absolute grading in which each group is supported.  

To express the $d$-invariant, choose a reduced, alternating diagram $D$ for $L$, and let $G$ denote its Tait graph.  Associated to $G$ is its lattice of integral flows $\cF(G)$; this lattice is presented by the Goeritz matrix for $D$.  For an integral lattice $\Lambda$, define the characteristic coset
\[ \Char(\Lambda) = \{ \chi \in \Lambda^* \; | \; \L \chi, y \R \equiv |y| \, (\mod 2), \forall y \in \Lambda \}, \]
where $\L \, , \, \R$ denotes the inner product and $|\cdot|$ the norm (self-pairing) of an element.  The set $C(\Lambda) = \Char(\Lambda) \, (\mod 2 \Lambda)$ forms a torsor over the discriminant group $\Lambda^* / \Lambda$.  Given $[\chi] \in C(\Lambda)$, define
\[ d_\Lambda([\chi]) = \min \left\{ \frac{|\chi'|- \rk(\Lambda)}{4} \; | \; \chi' \in [\chi] \right\},\] 
and call an element $\chi \in \Char(\Lambda)$ {\em short} if its norm is minimal in $[\chi]$.  We call the pair $(C(\Lambda),d_\Lambda)$ the {\em $d$-invariant} of the lattice $\Lambda$.  

There exists a natural identification between the torsors $\Spc(\Sigma(L))$ and $C(\cF(G))$, and Ozsv\'ath-Szab\'o showed that this identification extends to an isomorphism between the pairs $(\Spc(\Sigma(L)),d)$ and $(C(\cF(G)),-d_\cF)$ (Theorem \ref{t: osz}).  In summary, the isomorphism type of $\widehat{HF}(\Sigma(L))$ is determined by the (lattice theoretic) $d$-invariant of the lattice of integral flows on the Tait graph.

The foregoing description of $\widehat{HF}(\Sigma(L))$ begs the question: when do the flow lattices attached to a pair of graphs have isomorphic $d$-invariants?  Our main combinatorial result answers this question.

\begin{thm}\label{t: main graph preview}

The following are equivalent for a pair of 2-edge-connected graphs $G, G'$:

\begin{enumerate}

\item $\cF(G)$ and $\cF(G')$ have isomorphic $d$-invariants;

\item $\cF(G) \cong \cF(G')$; and

\item $G$ and $G'$ are 2-isomorphic.

\end{enumerate}

\end{thm}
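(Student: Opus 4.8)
The plan is to prove the chain $(3)\Rightarrow(2)\Rightarrow(1)\Rightarrow(2)\Rightarrow(3)$. Since isometric lattices visibly have identical $d$-invariants, the implication $(2)\Rightarrow(1)$ is free, and the substance lies in the three implications $(3)\Rightarrow(2)$, $(2)\Rightarrow(3)$, and $(1)\Rightarrow(2)$, the last being the heart of the matter. Throughout I regard $\cF(G)$ as the kernel of the boundary map $\Z^E\to\Z^V$, equipped with the restriction of the standard inner product on $\Z^E$; that is, the cycle space of $G$ as an integral sublattice of $\Z^E$.

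For $(3)\Rightarrow(2)$, recall Whitney's theorem: two graphs have isomorphic cycle matroids if and only if they are related by a sequence of vertex identifications/cleavings and Whitney twists. Each such operation alters $G$ without changing the cycle space inside $\Z^E$, up to a permutation of the coordinates of $\Z^E$ and sign changes on individual coordinates — operations that manifestly preserve the isometry type of $\cF(G)$. For $(2)\Rightarrow(3)$, one must run a Torelli-type argument in reverse: the flow lattice remembers the cycle matroid. The point is to characterize intrinsically, inside the abstract lattice $\cF(G)$, the circuit vectors — the $\{0,\pm1\}$-flows supported on a simple cycle — and the hyperplane arrangement they determine; these recover the cycle matroid of $G$, hence $G$ up to 2-isomorphism via Whitney. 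Here 2-edge-connectivity is used to rigidify the embedding $\cF(G)\into\Z^E$ and to exclude degenerate orthogonal splittings, and this step can partly be quoted from the existing literature on the graph Torelli theorem.

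The crux is $(1)\Rightarrow(2)$: the $d$-invariant of the flow lattice of a 2-edge-connected graph determines that lattice up to isometry. The topological picture (via the Ozsv\'ath--Szab\'o identification discussed above) is that $(C(\cF(G)),d_{\cF(G)})$ is, up to sign, the Heegaard Floer $d$-invariant of $\Sigma(L)$, which bounds a sharp 4-manifold realizing $-\cF(G)$; gluing to a complementary filling exhibits the unimodular overlattice $\cF(G)\oplus\Cut(G)\into\Z^E$. Given a second lattice $\Lambda$ with the same $d$-invariant, the strategy is to reconstruct $\cF(G)$ from $d$ directly: read off $\rk(\Lambda)$ and $\det(\Lambda)=|C(\Lambda)|$; use the identity $d([\chi])=(|\chi|-\rk)/4$ to extract the short characteristic covectors, hence the short vectors of $\Lambda^*$ and of $\Lambda$; then show that for a flow lattice these circuit vectors generate, and that the combinatorial incidence constraints among them pin down the isometry type. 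An induction — on the number of edges, mirroring deletion--contraction at the level of the ambient $\Z^E$, while checking that the $d$-invariant of the resulting minor is still determined — is the natural way to organize this.

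I expect $(1)\Rightarrow(2)$ to be the main obstacle, for a structural reason: the $d$-invariant records only a \emph{minimum} over each coset, so a priori it sees only the short characteristic covectors, and one must bootstrap from that meagre data to a full description of $\Lambda$. Two points carry the difficulty: keeping the reconstruction inside the class of flow lattices of 2-edge-connected graphs at every inductive step, and controlling the ambiguity in how a short vector (equivalently, an edge or an elementary cut) is split off. The levers that should make it go through are positive-definiteness together with the special features of graphic lattices — uniformly bounded generator norms, a unimodular overlattice, and a discriminant linking form of graph type — all of which are themselves legible in the $d$-invariant.
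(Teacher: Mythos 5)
Your implications (3)$\Rightarrow$(2) and (2)$\Rightarrow$(1) are fine and agree with the paper (the switch/twist analysis is Proposition \ref{p: main prep}, after Bacher et al.), and quoting a graph Torelli theorem for (2)$\Rightarrow$(3) is a legitimate, if externally sourced, shortcut. The genuine gap is the step you yourself identify as the crux, (1)$\Rightarrow$(2): what you offer there is a restatement of the difficulty plus an unsubstantiated plan, not an argument. An isomorphism of $d$-invariants is only an isomorphism of torsors over the discriminant groups together with a $\Q$-valued function; it supplies no map between the lattices, and it does not even determine the rank a priori (every $\Z^n$ has a one-point torsor with $d\equiv 0$), so ``read off $\rk(\Lambda)$'' and ``extract the short vectors of $\Lambda^*$ and of $\Lambda$'' are not defined operations on this data. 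Knowing the minimal norm of a characteristic covector in each class mod $2\Lambda$ does not hand you any vectors, and your proposed deletion--contraction induction is never given a mechanism: you would need to know that the $d$-invariant of $\cF(G)$ determines the $d$-invariants of the minors' flow lattices and that the ambiguity in splitting off an edge can be controlled, which is essentially the theorem again. Even the (true) fact that circuit vectors generate $\cF(G)$ does not help, because the $d$-invariant does not present those vectors.

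What actually closes this gap in the paper is a different mechanism, which your sketch gestures at (the overlattice $\cF(G)\oplus\cC(G)\subset C_1(G;\Z)$ for a single graph) but never deploys across the two graphs. One first shows (Proposition \ref{p: short graph}, via Hakimi/Hall) that every short characteristic covector of $\cC(G)$ and $\cF(G)$ extends to one of $C_1(G;\Z)$, whence the cut and flow $d$-invariants are opposite (Corollary \ref{c: cut and cycle}). Given an isomorphism of the flow $d$-invariants of $G$ and $G'$, one therefore gets $(C(\cC(G)),d_\cC)\cong(C(\cF(G')),-d_{\cF'})$ and can \emph{cross-glue}: $\cC(G)\oplus_\varphi\cF(G')$ is an integral, positive definite, unimodular lattice all of whose characteristic covectors have norm at least the rank, so Elkies's theorem forces it to be $\Z^n$, with both $\Short$-restriction maps surjective (Proposition \ref{p: gluing}). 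The rigid-embedding Proposition \ref{p: rigid} then compares this copy of $\Z^n$ with $C_1(G;\Z)$ and $C_1(G';\Z)$, and a rank count makes the comparison maps isomorphisms, producing a bijection $E(G)\to E(G')$ that carries $\cF(G)$ to $\cF(G')$ and is checked to be a 2-isomorphism; this gives (1)$\Rightarrow$(3), hence also (1)$\Rightarrow$(2), in one stroke. Without Elkies's theorem (or an input of comparable strength) your bootstrap from the ``meagre data'' of minimal characteristic norms to the full isometry type does not go through, and no purely combinatorial substitute is indicated in your proposal.
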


\noindent A {\em 2-isomorphism} between a pair of 2-edge-connected graphs is a cycle-preserving bijection between their edge sets.  Note that Theorem \ref{t: main graph preview} applies to arbitrary 2-edge-connected graphs, not just planar ones. Also, the implication (3)$\implies$(2) appears in \cite[Prop.5]{bacheretal:lattices}, and (2)$\implies$(3) resolves the question implicit at the end of that paper.  An extended version of Theorem \ref{t: main graph preview} appears as Theorem \ref{t: main graph} below. 


\subsection{Prospectus on Theorem \ref{t: main}.}

We sketch the proof of Theorem \ref{t: main}, using Theorem \ref{t: main graph preview}.  The forward implications in Theorem \ref{t: main} are immediate, so it stands to establish (4)$\implies$(1).  Thus, choose a pair of reduced, alternating diagrams $D,D'$ for a pair of links $L,L'$ for which $\widehat{HF}(\Sigma(L)) \cong \widehat{HF}(\Sigma(L'))$.  It follows that $\cF(G)$ and $\cF(G')$ have isomorphic $d$-invariants, where $G, G'$ denote the Tait graphs.  By Theorem \ref{t: main graph preview}, it follows that $G$ and $G'$ are 2-isomorphic.  Now we invoke two graph theoretic results. First, a theorem of Whitney asserts that a pair of 2-isomorphic graphs are related by a sequence of switches. Second, using Whitney's result, a theorem of Mohar-Thomassen about planar graphs asserts that any two planar drawings of a pair of 2-isomorphic planar graphs are related by a sequence of flips, planar switches, and swaps.  Each of these transformations of planar graphs corresponds to a Conway mutation of link diagrams, so it follows that $D, D'$ are mutants.


\subsection{Prospectus on Theorem \ref{t: main graph preview}.}

Now we sketch the proof of Theorem \ref{t: main graph preview}.  To a graph $G$ we associate the chain group $C_1(G;\Z)$.  This group naturally inherits the structure of a lattice by taking the edge set of $G$ as an orthonormal basis.  Within $C_1(G;\Z)$ sits a pair of distinguished sublattices, the lattice of integral cuts $\cC(G)$, and the aforementioned lattice of integral flows $\cF(G)$.  (For the case of a planar graph $G$ with planar dual $G^*$, we have $\cC(G) \cong \cF(G^*)$.)  In general, $\cC(G)$ and $\cF(G)$ are complementary, primitive sublattices of $C_1(G;\Z)$.  Furthermore, every short characteristic covector for $\cC(G)$ and $\cF(G)$ is the restriction of one for $C_1(G;\Z)$.
It follows that the $d$-invariants of these sublattices are opposite one another: that is, there exists a natural isomorphism $(C(\cF(G)),d_\cF) \overset{\sim}{\to} (C(\cC(G)), -d_\cC)$. 

Now suppose that the flow lattices of $G$ and $G'$ have isomorphic $d$-invariants.  Since the discriminant groups are isomorphic, we can glue $\cF(G)$ and $\cC(G')$ to produce an integral, positive definite, unimodular lattice $\Lambda$.  Furthermore, since they have opposite $d$-invariants, $\Lambda$ has vanishing $d$-invariant.  By a theorem of Elkies, it follows that $\Lambda$ admits an orthonormal basis.  Using the fact that every short characteristic covector for $\cF(G)$ and $\cC(G')$ is the restriction of one for $\Lambda$, we can set the orthonormal basis for $\Lambda$ in one-to-one correspondence with the edge sets of $G$ and $G'$.  It easily follows that the resulting bijection between the edge sets of $G$ and $G'$ is a 2-isomorphism.


\subsection{Repercussions of Theorem \ref{t: main}.}\label{ss: repercussions}

Since diagrammatic mutations clearly preserve the number of crossings, Theorem \ref{t: main} implies that two reduced, alternating diagrams for the {\em same} link have the same number of crossings.  Furthermore, if a reduced, alternating diagram admits no non-trivial mutation, then it is the unique reduced, alternating diagram representing its link type (cf. \cite{schrijver:tait}).  Of course, much more is known now: any minimal crossing diagram of an alternating link is alternating \cite{kauff:altknots, murasugi:altknots, thistle:spanning}, and any two such diagrams are related by a sequence of {\em flypes} \cite{mt:tait}.  We simply point out that we obtain these corollaries in a rather different manner from how they were originally deduced, using graphs, lattices, and Floer homology in place of the Jones polynomial and explicit geometric arguments.

Second, Theorem \ref{t: main} generalizes the homeomorphism classification of the three-dimensional lens spaces and the isotopy classification of two-bridge links.  The lens spaces arise as the branched double-covers of the two-bridge links, and we argue directly in Proposition \ref{p: 2bridge} that a pair of two-bridge diagrams in standard position are mutants iff they coincide up to isotopy and reversal.  Using this fact, Theorem \ref{t: main} implies a one-to-one correspondence between such diagrams and lens spaces, yielding at once the classification of both.  We point out that $\widehat{HF}$ recovers the Reidemeister torsion by a result of Rustamov \cite[Thm.3.4]{rustamov:turaev}, which is well-known to completely distinguish the homeomorphism types of lens spaces \cite{brody:torsion, reidemeister:torsion}, and which leads to the classification of 2-bridge links \cite{schubert:2bridge}.  Of course, the homeomorphism classification follows from the stronger result that every lens space possesses a unique Heegaard torus up to isotopy \cite[Thm.1]{bonahon:lens}, \cite[Thm.5.1]{hr:2bridge}.  We simply point out, once again, the different manner of our argument.

Third, note that Theorem \ref{t: main} cannot extend too far beyond the domain of alternating links, due to the existence of non-mutant links with homeomorphic branched double-covers.  It would be interesting to know whether Theorem \ref{t: main} generalizes to quasi-alternating links.  Note that the invariant $\widehat{HF}$ does not distinguish the branched double-covers of alternating and non-alternating knots in general, such as the unknot and $T(3,5) \# \overline{T(3,5)}$.  However, we propose the following conjecture.

\begin{conj}\label{conj: funny pair}

There does not exist a pair of links, one alternating, the other non-alternating, with homeomorphic branched double-covers.

\end{conj}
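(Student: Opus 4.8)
\section*{Proof proposal for Conjecture \ref{conj: funny pair}.}

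The plan is to reduce the conjecture to the lattice-theoretic machinery already developed, mirroring the reduction of Theorem \ref{t: main} to Theorem \ref{t: main graph preview}. Suppose, for contradiction, that $L$ is alternating, $L'$ is non-alternating, and $\Sigma(L)$ and $\Sigma(L')$ are homeomorphic; since both are L-spaces we may fix orientations and take the homeomorphism to preserve them. Choose a reduced alternating diagram $D$ for $L$ with Tait graph $G$. Then $\Sigma(L)$ is an L-space, its $d$-invariant is $-d_{\cF(G)}$ by Theorem \ref{t: osz}, and, crucially, $\Sigma(L)$ bounds a \emph{sharp}, definite, simply connected $4$-manifold $W$ --- the double cover of $B^4$ branched along a pushed-in checkerboard surface of $D$ --- whose intersection lattice is one of the two Goeritz lattices of $D$ and which sits as a primitive sublattice of $\Z^n$ with the other Goeritz lattice as its orthogonal complement. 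Transporting $W$ across the homeomorphism, $\Sigma(L')$ bounds $W$ as well. If one could also manufacture, from \emph{some} diagram or spanning surface of $L'$, a sharp definite filling $W'$ of $\Sigma(L')$ of the opposite sign, then $X := W \cup_{\Sigma} \overline{W'}$ would be a closed, positive-definite $4$-manifold; its intersection lattice is the unimodular overlattice obtained by gluing the two Goeritz-type lattices along the discriminant form shared by $\Sigma(L)$ and $\Sigma(L')$, so by Donaldson's theorem (equivalently, Elkies' theorem applied to the lattice, as in the proof of Theorem \ref{t: main graph preview}) it is standard. Running the reconstruction step of that proof --- using that every short characteristic covector of each Goeritz lattice restricts from one of $\Z^N$ --- then puts an orthonormal basis of $\Z^N$ in bijection with the edge sets of $G$ and of the Tait graph $G'$ of the chosen diagram of $L'$, exhibiting a $2$-isomorphism between $G$ and $G'$ and forcing $L'$ to be alternating, the desired contradiction.

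The main obstacle is the construction of the opposite-sign filling $W'$. For an alternating diagram this is automatic, but a non-alternating diagram's Goeritz form is generically indefinite, and the double cover of $B^4$ branched along a pushed-in minimal-genus Seifert surface of $L'$ need not be sharp or definite. So the genuine content of the conjecture is the statement: an L-space that bounds a sharp definite $4$-manifold whose intersection lattice is a graph flow (equivalently cut) lattice is the branched double-cover of an alternating link. Equivalently, invoking the characterization of alternating links as those bounding both a positive-definite and a negative-definite connected spanning surface in $S^3$, one must upgrade the definite filling $W$ inherited from the alternating side to a definite \emph{spanning surface} of $L'$ and, moreover, produce a spanning surface of $L'$ of the opposite sign. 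The difficulty is structural: the homeomorphism type of $\Sigma(L')$ exerts only loose control over $L'$ itself, because the assignment $L \mapsto \Sigma(L)$ is far from injective.

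To attack the obstacle, I would try to extract a definite spanning surface of $L'$ directly from the $\spc$-graded invariant $\widehat{HF}(\Sigma(L'))$, using the vanishing of the correction terms of the closed manifold $X$ --- equivalently the Owens--Strle--type constraints satisfied by an L-space that bounds a sharp definite filling --- to rule out an indefinite Gordon--Litherland form for \emph{every} spanning surface of $L'$. Short of a full proof, I would first test the conjecture where $\Sigma$ has few $\spc$ structures: for lens spaces it already follows from Proposition \ref{p: 2bridge} together with the classification, and for small Brieskorn spheres one can enumerate $d_{\cF(G)}$ directly and check whether the gluing-and-Elkies mechanism ever obstructs a hypothetical alternating/non-alternating coincidence. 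If no obstruction surfaces in examples, the effort should be redirected toward constructing a candidate counterexample rather than a proof.
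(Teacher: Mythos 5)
The statement you are addressing is a \emph{conjecture}: the paper offers no proof of it, only circumstantial evidence (Menasco's theorem that mutation preserves alternatingness, Hodgson--Rubinstein's result for two-bridge links, Dunfield's census computations, and the absence of a known counterexample). So there is no proof in the paper to compare yours against, and the only question is whether your proposal actually settles the conjecture. It does not, and to your credit you say so yourself. The one-sided nature of the hypothesis is exactly the sticking point: the alternating link $L$ hands you a sharp, definite filling of $\Sigma(L)\cong\Sigma(L')$ whose intersection lattice is a Goeritz (flow/cut) lattice, but nothing in the hypothesis produces a second definite object attached to $L'$ --- neither an opposite-sign sharp filling $W'$ nor a definite spanning surface --- and without that second piece the gluing-plus-Elkies mechanism of Proposition \ref{p: gluing} and Proposition \ref{p: rigid} never gets off the ground, since it needs \emph{two} complementary lattices with matched $d$-invariants. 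Your reformulation (``an L-space bounding a sharp definite filling with graph-lattice intersection form is the branched double-cover of an alternating link'') is not a reduction but a restatement of the conjecture's content, and the characterization of alternating links via a pair of definite spanning surfaces that you invoke is itself an external input not available in, or used by, this paper.

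A further soft spot, even granting the missing filling: your final step ``exhibiting a 2-isomorphism between $G$ and $G'$ and forcing $L'$ to be alternating'' conflates two different things. The reconstruction in the proof of Theorem \ref{t: main graph} produces a 2-isomorphism between two \emph{graphs} whose cut/flow lattices glue to $\Z^n$; to conclude that $L'$ is alternating you would need the second lattice to arise as the Gordon--Litherland form of an actual spanning surface of $L'$, and then an argument (in the spirit of Corollary \ref{c: mutant graphs}) converting that combinatorial data back into an alternating diagram of $L'$. Neither step is supplied. As a research sketch your proposal is reasonable and correctly locates the difficulty, but it should be presented as a program, not a proof; the paper's own stance --- listing evidence and leaving the conjecture open --- remains the accurate one.
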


We provide some limited evidence in support of Conjecture \ref{conj: funny pair}.  First, no mutant pair violates Conjecture \ref{conj: funny pair}, since a result of Menasco implies that mutation preserves alternatingness \cite[Proof of Thm.3(b)]{menasco:alternating}.  Second, Hodgson-Rubinstein showed that a two-bridge link is uniquely determined by its branched double-cover \cite[Cor.4.12]{hr:2bridge}.  Third, Dunfield investigated knots with at most 16 crossings whose branched double-covers are hyperbolic and of small enough volume to appear in the Hodgson-Weeks census of closed hyperbolic 3-manifolds.  He reports 3765 non-alternating knots with such branched covers but only 178 alternating knots, and no manifold appears as the branched double-cover of both kinds of knots \cite{dunfield:comm}.  Finally, and most persuasively to this author, is the lack of any counterexample known to the (non-exhaustive) list of experts we consulted!


\subsection{Organization.}

The main body is organized into three sections: {\em Lattices}, {\em Graphs}, and {\em Conway Mutation}.  Each section begins at a basic level and invokes a key auxiliary result: Elkies's theorem on unimodular lattices; Whitney's theorem on 2-isomorphism of graphs; and Ozsv\'ath-Szab\'o's theorem on $\widehat{HF}(\Sigma(L))$, respectively.  It remains a curious fact that although Elkies's theorem asserts a purely algebraic fact and we it use towards a combinatorial end, its only known proof relies on analytical methods (modular forms).  Lastly, it is perhaps fitting that the chief insight involved the use of lattice gluing, a technique we learned from Conway, to establish the desired result about Conway mutation of alternating links.


\section*{Acknowledgments.}

Thanks to Nathan Dunfield and Oleg Viro for helpful correspondence, to Liam Watson for an inspiring conversation, and to the Venice Beach Institute for its hospitality. 


\section{Lattices.}


\subsection{Preparation.}

A {\em lattice} consists of a finitely-generated, free abelian group $\Lambda$ equipped with a non-degenerate, symmetric bilinear form \[ \L \, , \, \R : \Lambda \times \Lambda \to \Q.\]  It is {\em integral} if the image of its pairing lies in $\Z$, and the symbol $\Lambda$ will always denote an integral lattice in what follows.  The form extends to a $\Q$-valued pairing on $\Lambda \otimes \Q$, which allows us to define the {\em dual lattice}
\[ \Lambda^* := \{ x \in \Lambda \otimes \Q \; | \; \L x,y \R \in \Z, \; \forall \, y \in \Lambda \}.\] 
Given $x \in \Lambda^*$, denote by $\overline{x}$ its image in the {\em discriminant group} $\overline{\Lambda} := \Lambda^*/\Lambda$.  The  {\em discriminant} $\disc(\Lambda)$ is the order of this finite group, and $\Lambda$ is {\em unimodular} if $\disc(\Lambda)=1$.  For example, the lattice generated by $n$ orthonormal elements is the integral, unimodular lattice denoted $\Z^n$.  The pairing on $\Lambda$ descends to a non-degenerate, symmetric bilinear form
\[ b : \overline{\Lambda} \times \overline{\Lambda} \to \Q / \Z,\]
\[ b(\overline{x},\overline{y}) \equiv \L x,y \R \; (\mod 1), \]
the {\em discriminant form} (or, for topologists, the {\em linking form}).  The {\em norm} of $x \in \Lambda^*$ is the self-pairing $|x| := \L x,x \R$, and we set $q(\overline{x}) := b(\overline{x},\overline{x})$.


\subsection{The characteristic coset.}

Let
\[ \Char(\Lambda) = \{ \chi \in \Lambda^* \; | \; \L \chi,y \R \equiv |y| \, (\mod 2), \, \forall \, y \in \Lambda \} \]
denote the set of {\em characteristic covectors} for $\Lambda$.  This set constitutes a distinguished coset in $\Lambda^* / 2\Lambda^*$.  Correspondingly, the set \[ C(\Lambda) := \Char(\Lambda) \; (\mod 2\Lambda) \] forms a torsor over the group $2\Lambda^* / 2\Lambda \cong \overline{\Lambda}$.  Thus, for a unimodular lattice, such as $\Z^n$, this torsor has one element.  Given $\chi \in \Char(\Lambda)$, let $[\chi]$ denote its image in $C(\Lambda)$.  We never refer to $\overline{\chi} \in \overline{\Lambda}$, so no confusion should result.  We obtain a map
\[ \rho: C(\Lambda) \to \Q / 2 \Z, \]
\[ \rho([\chi]) \equiv \frac{|\chi| - \sigma(\Lambda)}{4} \; (\mod 2),\]
where $\sigma(\Lambda)$ denotes the {\em signature} of the pairing on $\Lambda$ (cf. \cite{os:definite}, \cite[\S 1.1]{os:absgr}).  The map $\rho$ is well-defined since
\[ \frac{1}{4}(|\chi + 2y| - |\chi|) = \L \chi, y \R + |y| \equiv 0 \, (\mod 2), \quad \forall y \in \Lambda.\]
By definition, the pair $(C(\Lambda),\rho)$ is the {\em $\rho$-invariant} of $\Lambda$.  In this terminology, we have the following classical result.

\begin{thm}[van der Blij \cite{vanderblij:characteristic}]\label{t: van der blij}

The mapping $\rho: C(\Lambda) \to \Q / 2\Z$ vanishes for a unimodular lattice $\Lambda$. \qed

\end{thm}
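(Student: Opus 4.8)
The plan is to show that for a unimodular lattice $\Lambda$, the quantity $|\chi| - \sigma(\Lambda)$ is divisible by $8$ for every characteristic covector $\chi$, and moreover that this residue is independent of the choice of $\chi \in \Char(\Lambda)$. Since in the unimodular case $\Lambda^* = \Lambda$, the set $\Char(\Lambda)$ is a single coset of $2\Lambda$, so $C(\Lambda)$ is a single point; hence it suffices to exhibit one characteristic covector whose norm is congruent to $\sigma(\Lambda)$ modulo $8$, and then invoke the already-established well-definedness of $\rho$ (the displayed congruence $\tfrac14(|\chi + 2y| - |\chi|) \equiv 0 \pmod 2$) to conclude $\rho \equiv 0$ on all of $C(\Lambda)$.

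First I would diagonalize the pairing over $\bR$, writing $\Lambda \otimes \bR \cong \bR^{p,n}$ with $p$ positive and $n$ negative eigenvalues, so $\sigma(\Lambda) = p - n$. The crux is then a reduction to the indefinite and definite cases via the classification of unimodular lattices, or alternatively a direct argument. I would take the direct route: reduce modulo $2$ to study the $\bF_2$-quadratic form $y \mapsto |y| \bmod 2$ on $\Lambda/2\Lambda$, whose associated bilinear form is the reduction of $\L\,,\R$, which is nondegenerate since $\Lambda$ is unimodular. A nondegenerate quadratic form over $\bF_2$ on a space of dimension $r = \rk(\Lambda)$ has an Arf invariant, and the characteristic covectors correspond to the unique vector $c \in \Lambda/2\Lambda$ with $\L c, y\R \equiv |y| \pmod 2$ for all $y$; computing $|c| \bmod 2$ recovers $r \bmod 2$ compatibility, but to get the mod $8$ statement one needs the finer arithmetic.

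Accordingly, the cleaner path is to induct on the rank using splitting. If $\Lambda$ is indefinite, a theorem in the style of Meyer/Hasse-Minkowski guarantees that $\Lambda$ represents zero, hence contains a hyperbolic plane $U$ as an orthogonal summand, $\Lambda \cong U \oplus \Lambda'$; both $U$ and $\Lambda'$ are unimodular, $\sigma(U) = 0$, and a characteristic covector for $\Lambda$ restricts to characteristic covectors on each summand, with norms adding. For $U$ one checks directly that the characteristic covector is $(1,1)$ (in a hyperbolic basis) with norm $2 \equiv 0 \pmod 8$ after the sign conventions are fixed — more carefully, $|\chi_U| - \sigma(U) = 0$. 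This reduces the indefinite case to lower rank. In the positive (or negative) definite case, one falls back on the classical fact that a positive definite unimodular lattice has $|\chi| \equiv \rk(\Lambda) \pmod 8$; the smallest such examples are $\Z^n$ (where $\chi = (1,\dots,1)$, $|\chi| = n = \sigma$) and $E_8$ (where $\sigma = 8$ and the characteristic covector $0$ has norm $0$), and the general statement is exactly van der Blij's lemma proved by this very congruence analysis.

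The main obstacle is pinning down the definite case cleanly without circularity: one cannot simply cite the classification of definite unimodular lattices to prove a statement that is logically prior to it. The honest resolution is the well-known direct proof: choose any $\chi \in \Char(\Lambda)$; the sublattice $\Lambda_0 = \{y : \L \chi, y\R \equiv 0 \pmod 2\}$ has index $2$, its orthogonal complement arithmetic forces $\chi/1 \in \Lambda$ with $|\chi|$ odd-rank-many terms, and a counting argument with the discriminant form (using that $q(\bar 0) = 0$ and $b$ is nondegenerate on the trivial group) shows $|\chi| \equiv \sigma(\Lambda) \pmod 8$. Equivalently, and most economically, one invokes that for unimodular $\Lambda$ the Gauss sum $\sum_{x \in \overline{\Lambda}} e^{\pi i q(x)}$ equals $e^{\pi i \sigma(\Lambda)/4} |\overline{\Lambda}|^{1/2} = e^{\pi i \sigma(\Lambda)/4}$ (Milgram's formula), which is a unit complex number; tracking this through the definition of $\rho$ forces $|\chi| \equiv \sigma(\Lambda) \pmod 8$, i.e. $\rho \equiv 0$. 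I would present the induction-to-hyperbolic-planes argument as the main line and remark that the definite base case is the content of Milgram's formula (or van der Blij's original congruence), which closes the proof.
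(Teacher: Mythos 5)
A preliminary remark: the paper offers no proof of this statement at all --- it is quoted from van der Blij with a citation and a \textup{q.e.d.} box --- so there is no internal argument to compare you against; the question is only whether your proposal is itself a proof. Your opening reduction is correct: for unimodular $\Lambda$ one has $\Lambda^* = \Lambda$, so $C(\Lambda)$ is a single class, and by the displayed well-definedness computation for $\rho$ it suffices to produce one $\chi \in \Char(\Lambda)$ with $|\chi| \equiv \sigma(\Lambda) \pmod 8$. The indefinite step is also essentially sound: a primitive isotropic vector splits off a unimodular plane of signature zero whose characteristic vectors have norm divisible by $8$, and one inducts (minor slip: $(1,1)$ is characteristic for the odd plane $\mathrm{diag}(1,-1)$, not for the even hyperbolic plane $U$, whose characteristic vectors lie in $2U$; you half-acknowledge this).

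The genuine gap is the definite case, which after your reductions is the entire content of the theorem, and neither of your proposed resolutions proves it. Citing ``the classical fact that a positive definite unimodular lattice has $|\chi| \equiv \rk(\Lambda) \pmod 8$'' is citing van der Blij's theorem itself --- circular, as you concede. The fallback on Milgram's formula does not repair this: the Gauss sum $\sum_{x \in \overline{\Lambda}} e^{\pi i q(x)}$ requires $q$ to be defined in $\Q/2\Z$, i.e.\ it is a statement about \emph{even} lattices, and for a unimodular lattice the sum runs over the trivial group, so it asserts only $\sigma(\Lambda) \equiv 0 \pmod 8$ --- the even unimodular case. Applied verbatim to an odd unimodular lattice such as $\Z$ it would yield the false conclusion $1 \equiv 0 \pmod 8$; it gives no control whatsoever on $|\chi|$ in the odd definite case, which is exactly the case left over. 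Your remaining sketch via the index-two even sublattice $\Lambda_0 = \{ y \in \Lambda : \L \chi, y \R \equiv 0 \pmod 2 \}$ points toward a genuine route (the even-neighbor construction reduces odd unimodular lattices to even ones, where Milgram/theta-function reciprocity applies), but as written it is not an argument: the clause about ``$\chi/1 \in \Lambda$ with $|\chi|$ odd-rank-many terms'' does not parse, and nondegeneracy of $b$ on the trivial group carries no information. To close the proof you would either have to carry out that construction honestly (or van der Blij's original Gauss-sum argument), or reorganize your own induction so that no definite base case remains: stabilizing a definite lattice by a single $\langle -1 \rangle$ (or $\langle 1 \rangle$) summand changes $|\chi|$ and $\sigma$ by the same amount mod $8$ and makes the lattice indefinite, after which the plane-splitting induction runs all the way down to rank zero. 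As it stands, the crucial step is asserted, not proved --- which is presumably why the paper simply cites the result.
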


\begin{defin}\label{d: torsor isomorphism}

Let $f_i : C_i \to S$ be a map from a $G_i$-torsor $C_i$ to a set $S$, for $i=1,2$.  An {\em isomorphism} \[ \varphi: (C_1,f_1)  \overset{\sim}{\to} (C_2,f_2)\] consists of a bijection $\varphi : C_1 \to C_2$ and a group isomorphism $ \varphi : G_1 \to G_2$ such that $ f_2 = \varphi \circ f_1 $ and $\varphi(c)-\varphi(c') = \varphi(c-c')$ for all $c,c' \in C_1$.

\end{defin}

\begin{lem}[cf. \cite{os:definite}, Prop.6]\label{l: rho => disc}

An isomorphism of $\rho$-invariants $\varphi: (C(\Lambda_1),\rho_1) \to (C(\Lambda_2),\rho_2)$ induces an isomorphism of discriminant forms $\varphi: (\overline{\Lambda}_1,b_1) \to (\overline{\Lambda}_2,b_2)$.

\end{lem}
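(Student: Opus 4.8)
The plan is to extract the discriminant form from the $\rho$-invariant by a polarization-type argument. Given the isomorphism $\varphi: (C(\Lambda_1),\rho_1) \to (C(\Lambda_2),\rho_2)$, we already have, by definition, the group isomorphism $\varphi: \overline{\Lambda}_1 \to \overline{\Lambda}_2$ (using the identifications $2\Lambda_i^*/2\Lambda_i \cong \overline{\Lambda}_i$). So the content is that this same map carries $b_1$ to $b_2$. The key observation is that $\rho$ already encodes $q(\overline{x}) = b(\overline{x},\overline{x})$: if $\chi \in \Char(\Lambda)$ and $x \in \Lambda^*$, then $\chi + 2x \in \Char(\Lambda)$ as well, and a direct computation gives
\[
\frac{|\chi+2x| - |\chi|}{4} = \L \chi, x \R + |x|.
\]
Reducing modulo $2\Z$, the term $\L \chi, x\R$ contributes $0 \pmod 1$ only after we are careful — in fact $\L \chi, x\R \in \Z$ is an integer since $\chi \in \Lambda^* \subset \Lambda \otimes \Q$ pairs integrally with... no: $x \in \Lambda^*$, not $\Lambda$, so $\L\chi,x\R \in \frac{1}{\disc}\Z$ in general. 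The right way is to track this carefully: $\rho([\chi + 2x]) - \rho([\chi]) \equiv \L\chi,x\R + |x| \pmod{2\Z}$, and the class of $\chi+2x$ in $C(\Lambda)$ differs from that of $\chi$ by $2\overline{x} \in 2\Lambda^*/2\Lambda$, i.e. by $\overline{x} \in \overline{\Lambda}$ under the torsor structure.

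First I would fix a basepoint: choose $\chi_0 \in \Char(\Lambda_1)$, so that every class in $C(\Lambda_1)$ is $[\chi_0 + 2x]$ for a unique $\overline{x} \in \overline{\Lambda}_1$. Define $\psi_1(\overline{x}) := \rho_1([\chi_0+2x]) - \rho_1([\chi_0]) \in \Q/2\Z$; by the displayed identity this equals $\L\chi_0,x\R + |x| \pmod{2\Z}$. Now compute $\psi_1(\overline{x}+\overline{y}) - \psi_1(\overline{x}) - \psi_1(\overline{y})$: the $\L\chi_0,\cdot\R$ terms cancel, and $|x+y| - |x| - |y| = 2\L x,y\R$, so we obtain $2\L x, y\R \pmod{2\Z}$, which depends only on $\L x,y\R \pmod{\Z} = b_1(\overline{x},\overline{y})$. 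Thus $b_1$ is recovered from $\rho_1$ by the formula
\[
2\,b_1(\overline{x},\overline{y}) \equiv \psi_1(\overline{x}+\overline{y}) - \psi_1(\overline{x}) - \psi_1(\overline{y}) \pmod{2\Z},
\]
where the left side is read via the injection $\tfrac12\Z/\Z \cong \tfrac12 \cdot(2\Z)/2\Z \hookrightarrow \Q/2\Z$, $b \mapsto 2b$. The same formula holds for $\Lambda_2$ with $\varphi(\chi_0)$ as basepoint. Since $\varphi$ intertwines $\rho_1$ with $\rho_2$ and the torsor subtractions, it intertwines $\psi_1$ with $\psi_2$, hence intertwines $b_1$ with $b_2$.

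The main obstacle is purely bookkeeping: making precise the identification of the value group of $\psi$ inside $\Q/2\Z$ and checking that "multiplication by $2$" is a well-defined injection $\Q/\Z \to \Q/2\Z$ restricted to the relevant subgroups, so that $b_i$ is genuinely determined (not merely determined up to the kernel of that map). Concretely, $2: \Q/\Z \to \Q/2\Z$ is injective, so there is no loss; I would state this as a one-line lemma. The second point to watch is that the group isomorphism component of $\varphi$ must be shown to agree, under the basepoint identifications $C(\Lambda_i) \cong \overline{\Lambda}_i$, with the abstract $\varphi: \overline{\Lambda}_1 \to \overline{\Lambda}_2$ appearing in the statement — but this is immediate from the torsor compatibility $\varphi(c) - \varphi(c') = \varphi(c-c')$ in Definition \ref{d: torsor isomorphism}. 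Everything else is the routine computation above. I expect the whole argument to be three or four lines once the injection $2:\Q/\Z\into\Q/2\Z$ is noted.
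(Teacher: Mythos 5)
Your proposal is correct and is essentially the paper's own argument: the paper also recovers $b$ from $\rho$ by polarization, using the single identity $\L x,y\R = \tfrac18\bigl(|\chi+2x+2y|+|\chi|-|\chi+2x|-|\chi+2y|\bigr)$ to get $b(\overline{x},\overline{y}) \equiv \tfrac12\bigl(\rho([\chi]+2\overline{x}+2\overline{y})+\rho([\chi])-\rho([\chi]+2\overline{x})-\rho([\chi]+2\overline{y})\bigr) \pmod 1$, which is exactly your second difference of $\psi$ together with the (implicit) injectivity of doubling $\Q/\Z \into \Q/2\Z$. Your basepoint bookkeeping and the torsor-compatibility remark are fine and match the paper's "follows directly" step.
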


\begin{proof}

Given a lattice $\Lambda$, fix $\chi \in \Char(\Lambda)$ and select $x,y \in \Lambda^*$.  From the identity
\[ \L x,y \R = \frac{1}{8} ( |\chi+2x+2y| + |\chi| - |\chi+2x| + |\chi+2y| ) \]
we obtain
\[ b(\overline{x},\overline{y}) \equiv \frac{1}{2} ( \rho([\chi]+2\overline{x}+2\overline{y}) + \rho([\chi]) - \rho([\chi]+2\overline{x}) - \rho([\chi]+2\overline{y})) \, (\mod 1). \]
The statement of the Lemma now follows directly.

\end{proof}


\subsection{Gluing.}\label{ss: gluing}

Given a sublattice $\Lambda_1$ of a lattice $\Lambda$, we obtain a natural restriction map
\[ r_1: \Lambda^* \to \Lambda_1^*.\]
The sublattice $\Lambda_1 \subset \Lambda$ is {\em primitive} if the mapping $r_1$ surjects.  A pair of sublattices $\Lambda_1, \Lambda_2 \subset \Lambda$ are {\em complementary} if they are orthogonal and their ranks sum to that of $\Lambda$.  One may check that $\Lambda_1, \Lambda_2 \subset \Lambda$ are primitive and complementary iff $\Lambda_1^\perp = \Lambda_2$ and $\Lambda_2^\perp = \Lambda_1$, although we shall not need this fact.  Versions of the following gluing Lemma have been observed by a number of researchers, e.g. \cite[Ch.4, Thm.1]{cs:lattices}, \cite[Prop.1]{os:definite}.

\begin{lem}\label{l: pre-gluing 1}

Suppose that $\Lambda_1, \Lambda_2$ are a pair of complementary, primitive sublattices of a unimodular lattice $\Lambda$.  Then there exists a natural isomorphism
\begin{equation}\label{e: phi b}
\varphi: (\overline{\Lambda}_1,b_1) \overset{\sim}{\to} (\overline{\Lambda}_2,-b_2).
\end{equation}
Conversely, suppose that $\Lambda_1$ and $\Lambda_2$ are a pair of integral lattices and there exists an isomorphism $\varphi$ as in \eqref{e: phi b}. Then the glue lattice
\[ \Lambda_1 \oplus_\varphi \Lambda_2 := \{ x + y \in \Lambda_1^* \oplus \Lambda_2^* \; | \; \varphi(\overline{x}) = \overline{y} \} \]
is an integral, unimodular lattice that contains $\Lambda_1$ and $\Lambda_2$ as complementary, primitive sublattices.

\end{lem}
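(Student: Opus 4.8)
The plan is to exploit the sandwich $\Lambda_1 \oplus \Lambda_2 \subseteq \Lambda \subseteq \Lambda_1^* \oplus \Lambda_2^*$. In the first part it holds because $\Lambda$ is unimodular, so $\Lambda = \Lambda^*$, and because a vector $x \in \Lambda$, written as $x = x_1 + x_2$ according to the orthogonal decomposition $\Lambda \otimes \Q = (\Lambda_1 \otimes \Q) \perp (\Lambda_2 \otimes \Q)$, pairs integrally with each $\Lambda_i$, whence $x_i \in \Lambda_i^*$; moreover, under the usual identification, the restriction map $r_i : \Lambda^* \to \Lambda_i^*$ sends $x$ to $x_i$. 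Primitivity of $\Lambda_i$ says precisely that $r_i$ is onto, and I claim $\ker r_1 = \Lambda_1 \oplus \Lambda_2$: the inclusion $\supseteq$ is clear, and for $\subseteq$, if $x = x_1 + x_2 \in \Lambda$ with $x_1 \in \Lambda_1$ then $x_2 = x - x_1$ lies in $\Lambda \cap (\Lambda_2 \otimes \Q) = \Lambda_2$, the equality holding because a primitive sublattice is saturated. (This implication --- that $r_i$ surjective forces $\Lambda \cap (\Lambda_i \otimes \Q) = \Lambda_i$ --- is the one point I would record carefully; it follows by factoring $r_i$ through the dual of the saturation of $\Lambda_i$ and invoking that the double dual of a lattice is itself.) Composing the resulting isomorphisms $\Lambda / (\Lambda_1 \oplus \Lambda_2) \overset{\sim}{\to} \overline{\Lambda}_1$ and $\Lambda / (\Lambda_1 \oplus \Lambda_2) \overset{\sim}{\to} \overline{\Lambda}_2$ produces the bijection $\varphi$, natural in that it is assembled from the restriction maps; explicitly, $\varphi(\overline{x_1}) = \overline{x_2}$ whenever $x_1 + x_2 \in \Lambda$.

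To check that $\varphi$ carries $b_1$ to $-b_2$ as in \eqref{e: phi b}, take $x = x_1 + x_2$ and $x' = x'_1 + x'_2$ in $\Lambda$; integrality of $\Lambda$ together with orthogonality gives $\L x_1, x'_1 \R + \L x_2, x'_2 \R \in \Z$, that is, $b_1(\overline{x_1}, \overline{x'_1}) = -b_2(\overline{x_2}, \overline{x'_2}) = -b_2(\varphi \overline{x_1}, \varphi \overline{x'_1})$ in $\Q / \Z$. This completes the first half.

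For the converse, set $\Lambda := \Lambda_1 \oplus_\varphi \Lambda_2 \subseteq \Lambda_1^* \oplus \Lambda_2^*$; it is a full-rank subgroup containing $\Lambda_1 \oplus \Lambda_2$, hence a lattice under the inherited non-degenerate form, and it coincides with the preimage of the graph of $\varphi$ under the projection $\Lambda_1^* \oplus \Lambda_2^* \to \overline{\Lambda}_1 \oplus \overline{\Lambda}_2$, so $[\Lambda : \Lambda_1 \oplus \Lambda_2] = \disc(\Lambda_1)$. Integrality of $\Lambda$ is the same short computation as above run backwards, now using the defining relation $b_2(\varphi a, \varphi c) = -b_1(a,c)$. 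Unimodularity then drops out of the discriminant identity $\disc(\Lambda) = \disc(\Lambda_1)\,\disc(\Lambda_2) / [\Lambda : \Lambda_1 \oplus \Lambda_2]^2 = \disc(\Lambda_1)\,\disc(\Lambda_2) / \disc(\Lambda_1)^2 = 1$, where the final step uses $\disc(\Lambda_1) = \disc(\Lambda_2)$, valid since $\varphi$ is an isomorphism. Finally, $\Lambda_1$ and $\Lambda_2$ sit in $\Lambda$ as complementary primitive sublattices: they are orthogonal with ranks summing to $\rk(\Lambda)$, and given $x \in \Lambda_1^*$ one pairs it with any $y \in \Lambda_2^*$ representing $\varphi(\overline{x})$ to obtain $x + y \in \Lambda = \Lambda^*$ restricting to $x$ on $\Lambda_1$, so each $r_i$ surjects.

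I do not anticipate a serious obstacle, the content being essentially linear algebra over $\Z$; the step meriting the most care is the forward-direction kernel computation, specifically the passage from ``$\Lambda_i$ primitive'' to ``$\Lambda_i$ saturated''. Once that is in hand, the rest is bookkeeping with indices and discriminants.
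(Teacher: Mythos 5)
Your proof is correct and takes essentially the same route as the paper: the forward isomorphism is the Conway--Sloane gluing map $\varphi(\overline{x}_1)=\overline{x}_2$ whenever $x_1+x_2\in\Lambda$ (the paper cites Conway--Sloane and calls the verification you spell out ``straightforward''), and your converse mirrors the paper's argument --- integrality via the discriminant-form relation, unimodularity via the index count $[\Lambda:\Lambda_1\oplus\Lambda_2]=\disc(\Lambda_1)$, and primitivity from surjectivity of the projections onto $\Lambda_i^*$. The only blemish is notational: the subgroup you call $\ker r_1$ is really the kernel of the composite $\Lambda\to\Lambda_1^*\to\overline{\Lambda}_1$ (the kernel of $r_1$ itself is $\Lambda\cap(\Lambda_2\otimes\Q)$, not $\Lambda_1\oplus\Lambda_2$), though your argument plainly intends, and correctly establishes, the former.
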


\begin{proof}

($\implies$)To quote \cite{cs:lattices}, the isomorphism is given by $\varphi(\overline{x}) =\overline{y}$ whenever $x+y \in \Lambda$.  Verification of the stated properties is straightforward.

\noindent ($\impliedby$) By construction, $\Lambda := \Lambda_1 \oplus_\varphi \Lambda_2$ is a lattice that contains $\Lambda_1$ and $\Lambda_2$ as complementary sublattices.  It is integral since, given $x+y \in \Lambda$, we have
\[ |x+y| = |x| + |y| \equiv q_1(\overline{x}) + q_2(\overline{y}) \equiv q_1(\overline{x})+q_2(\varphi(\overline{x})) \equiv q_1(\overline{x}) - q_1(\overline{x}) \equiv 0 \, (\mod 1).\]
It is unimodular since $\Lambda / (\Lambda_1 \oplus \Lambda_2)  \cong \{ (\overline{x},\varphi(\overline{x})) \in \overline{\Lambda}_1 \oplus \overline{\Lambda}_2 \}$ is a square-root order subgroup of $\overline{\Lambda}_1 \oplus \overline{\Lambda}_2 \cong (\Lambda_1 \oplus \Lambda_2)^*/ (\Lambda_1 \oplus \Lambda_2)$.  Since $\Lambda$ is integral and unimodular, the restriction maps $r_1, r_2$ are simply the projections $\Lambda \to \Lambda_1^*$, $\Lambda \to \Lambda_2^*$.  These maps surject by construction, so $\Lambda_1$ and $\Lambda_2$ are primitive sublattices of $\Lambda$.

\end{proof}

The construction of Lemma \ref{l: pre-gluing 1} behaves well with respect to characteristic cosets.

\begin{lem}\label{l: pre-gluing 2}

Suppose that $\Lambda_1, \Lambda_2$ are a pair of complementary, primitive sublattices of a unimodular lattice $\Lambda$.  Then there exists a natural isomorphism
\begin{equation}\label{e: phi rho}
\varphi: (C(\Lambda_1),\rho_1) \overset{\sim}{\to} (C(\Lambda_2),-\rho_2).
\end{equation}
Conversely, suppose that $\Lambda_1$ and $\Lambda_2$ are a pair of integral lattices and there exists an isomorphism $\varphi$ as in \eqref{e: phi rho}.  Then $\Lambda_1 \oplus_\varphi \Lambda_2$ is an integral, unimodular lattice that contains $\Lambda_1$ and $\Lambda_2$ as complementary, primitive sublattices.

\end{lem}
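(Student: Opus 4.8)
The plan is to leverage Lemma \ref{l: pre-gluing 1} together with the relationship between the $\rho$-invariant and the discriminant form established in Lemma \ref{l: rho => disc}, reducing the bilinear-form content to something already proven and then bolting on the characteristic-coset bookkeeping. Concretely, in the forward direction I would start with complementary primitive sublattices $\Lambda_1,\Lambda_2$ of a unimodular lattice $\Lambda$. For a characteristic covector $\chi$ of $\Lambda$ — which exists and is unique mod $2\Lambda$ by van der Blij's setup, since $\Lambda$ is unimodular — I would show that its restrictions $r_1(\chi) \in \Char(\Lambda_1)$ and $r_2(\chi) \in \Char(\Lambda_2)$. This is a direct check: for $y \in \Lambda_1$, $\langle r_1(\chi), y\rangle = \langle \chi, y\rangle \equiv |y| \pmod 2$ because $y \in \Lambda$ and $\chi$ is characteristic for $\Lambda$. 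Then I would define $\varphi([r_1(\chi) + 2\xi]) = [r_2(\chi) - 2\eta]$ whenever $\xi + \eta \in \Lambda$ with $\xi \in \Lambda_1^*$, $\eta \in \Lambda_2^*$ (matching the gluing map $\varphi$ on discriminant groups from Lemma \ref{l: pre-gluing 1}, up to the sign needed to land in $C(\Lambda_2)$), and verify it is well-defined and a torsor isomorphism covering $\varphi: \overline{\Lambda}_1 \to \overline{\Lambda}_2$.

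The substantive point in the forward direction is the compatibility with $\rho$: I need $\rho_2(\varphi([r_1\chi + 2\xi])) = -\rho_1([r_1\chi + 2\xi])$. Writing $\chi = r_1\chi + r_2\chi$ (legitimate since $\Lambda$ unimodular forces $\chi \in \Lambda \subset \Lambda_1^* \oplus \Lambda_2^*$), and using additivity of the norm across the orthogonal summands, one gets $|r_1\chi + 2\xi| + |r_2\chi - 2\eta| = |\chi + 2\xi - 2\eta| = |\chi + 2(\xi - \eta)|$, and $\xi - \eta \in \Lambda$ (or rather $\xi + \eta \in \Lambda$, so one should be a touch careful with signs — replacing $\eta$ by $-\eta$ as needed), so by the well-definedness computation for $\rho$ already displayed in the paper this equals $|\chi| \pmod{8}$. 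Since $\sigma(\Lambda) = \sigma(\Lambda_1) + \sigma(\Lambda_2) = \rk(\Lambda_1) + \rk(\Lambda_2)$ (because $\Lambda$ unimodular positive... actually one should just use $\sigma$ additivity without definiteness assumptions), dividing by $4$ and rearranging yields $\rho_1 + \rho_2\circ\varphi \equiv \frac{|\chi| - \sigma(\Lambda)}{4} \equiv \rho([\chi]) \equiv 0 \pmod 2$ by Theorem \ref{t: van der blij}. That is exactly $\rho_2 \circ \varphi = -\rho_1$.

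For the converse, suppose we are given $\varphi: (C(\Lambda_1),\rho_1) \xrightarrow{\sim} (C(\Lambda_2),-\rho_2)$. By Lemma \ref{l: rho => disc} this induces an isomorphism $\overline{\varphi}: (\overline{\Lambda}_1, b_1) \to (\overline{\Lambda}_2, b_2)$; I must check it is in fact an isomorphism onto $(\overline{\Lambda}_2, -b_2)$, i.e. that the induced form isomorphism is anti-isometric rather than isometric. This follows by running the formula in the proof of Lemma \ref{l: rho => disc} but tracking the sign: since $\varphi$ intertwines $\rho_1$ with $-\rho_2$, the four-term combination computing $b_2(\overline{\varphi x}, \overline{\varphi y})$ comes out as $-b_1(\overline x, \overline y)$. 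Once I have $\overline\varphi: (\overline\Lambda_1, b_1) \xrightarrow{\sim} (\overline\Lambda_2, -b_2)$, the converse half of Lemma \ref{l: pre-gluing 1} immediately gives that $\Lambda_1 \oplus_{\overline\varphi} \Lambda_2$ is integral and unimodular and contains $\Lambda_1, \Lambda_2$ as complementary primitive sublattices, which is precisely the claim.

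I expect the main obstacle to be purely bookkeeping: pinning down the signs so that $\varphi$ genuinely maps $C(\Lambda_1)$ into $C(\Lambda_2)$ (not $-C(\Lambda_2)$, which is a subtlety since characteristic cosets are not subgroups) and simultaneously intertwines $\rho_1$ with $-\rho_2$ rather than $+\rho_2$; the correct normalization is forced by insisting that the restrictions of a single characteristic covector of the glued lattice $\Lambda$ be paired up. There is no deep content beyond Lemmas \ref{l: rho => disc} and \ref{l: pre-gluing 1} and van der Blij's theorem — the work is in assembling them carefully and checking that $\varphi$ respects the torsor structures over the (isomorphic) discriminant groups. I would therefore phrase the proof as ``apply Lemma \ref{l: rho => disc} to reduce the converse to Lemma \ref{l: pre-gluing 1}, and for the forward direction restrict a characteristic covector of $\Lambda$ and invoke Theorem \ref{t: van der blij}'', relegating the sign verifications to a sentence each.
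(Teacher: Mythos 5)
Your proposal is correct and follows essentially the same route as the paper: in the forward direction you pair up the restrictions of a characteristic covector of $\Lambda$, use additivity of norm and signature together with van der Blij's theorem to get $\rho_1+\rho_2\circ\varphi\equiv 0$, and for the converse you combine Lemma \ref{l: rho => disc} (with the sign tracked) with the second half of Lemma \ref{l: pre-gluing 1}, exactly as the paper does. The only deviations are cosmetic: you transport the definition of $\varphi$ from one restricted covector via the torsor action (the paper instead proves $\Char(\Lambda)\to\Char(\Lambda_i)$ surjects and checks well-definedness via the identity $2\Lambda\cap(2\Lambda_1+\Lambda_2^*)=2\Lambda_1+2\Lambda_2$), and your initial $-2\eta$ sign slip is correctly flagged and fixed by the normalization you describe.
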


\noindent The mapping of discriminant groups used in the gluing $\Lambda_1 \oplus_\varphi \Lambda_2$ is the one implicit in the torsor map $\varphi$, as in Definition \ref{d: torsor isomorphism}.

\begin{proof}

($\implies$) To begin with, observe that
\begin{equation}\label{e: intersection}
2\Lambda \cap (2\Lambda_1 + \Lambda^*_2) = 2\Lambda_1 + 2\Lambda_2
\end{equation}
(we identify $\Lambda_1^\perp \subset \Lambda$ with $\Lambda_2^*$ using Lemma \ref{l: pre-gluing 1}).  Indeed, if $2(x+y) \in 2L$ with $2x \in 2\Lambda_1$, $2y \in \Lambda^*_2$, then $0 = \overline{x} \in \overline{\Lambda}_1$, so $0 = \varphi(\overline{x}) = \overline{y} \in \overline{\Lambda}_2$ using the isomorphism of Lemma \ref{l: pre-gluing 1}.  Thus, $y \in \Lambda_2$ and $2y \in 2\Lambda_2$, as desired.

Next, each restriction map $r_i$ clearly carries $\Char(\Lambda)$ onto a subset of $\Char(\Lambda_i)$.  Furthermore, since $r_i$ maps $\Lambda$ onto $\Lambda_i^*$, it carries $ 2\Lambda$ onto $2\Lambda_i^*$, and hence the coset $\Char(\Lambda)$  {\em onto} $\Char(\Lambda_i)$.  Thus, given a pair of elements $\chi_1,\chi'_1 \in \Char(\Lambda_1)$ with $[\chi_1] = [\chi'_1] \in C(\Lambda_1)$, there exists a pair of elements $\chi_2, \chi'_2 \in \Char(\Lambda_2)$ such that $\chi = \chi_1 + \chi_2$ and $\chi' = \chi'_1 + \chi'_2$ belong to $\Char(\Lambda)$.  Their difference $\chi-\chi'$ belongs to $2\Lambda$ since $\Lambda$ is unimodular, and $\chi_1 - \chi'_1 \in 2\Lambda_1$ by assumption.  It follows from \eqref{e: intersection} that $[\chi_2] = [\chi'_2] \in C(\Lambda_2)$.  Thus, we obtain a well-defined mapping
\[ \varphi: C(\Lambda_1) \overset{\sim}{\to} C(\Lambda_2),\]
\[\varphi([\chi_1]) = [\chi_2] \iff \chi_1 + \chi_2 \in \Char(\Lambda), \]
whence
\[\Char(\Lambda) = \{ \chi_1 + \chi_2 \in \Char(\Lambda_1) \oplus \Char(\Lambda_2) \; | \; \varphi([\chi_1]) = [\chi_2] \}.\]
Furthermore, if $\varphi([\chi_1]) = [\chi_2]$, then
\[ \rho([\chi_1]) + \rho([\chi_2]) \equiv \frac{|\chi_1| -\sigma(\Lambda_1)}{4} + \frac{|\chi_2| -\sigma(\Lambda_2)}{4} = \frac{|\chi_1 + \chi_2| - \sigma(\Lambda)}{4} \equiv \rho(\Lambda) \equiv 0 \, (\mod 2), \]
applying Theorem \ref{t: van der blij} at the last step.  Finally, the mapping $\varphi$ covers the isomorphism of discriminant forms from Lemma \ref{l: pre-gluing 1}, so it preserves the torsor structure.  This establishes the first part of the Lemma.

\noindent ($\impliedby$) This follows directly on combination of Lemma \ref{l: rho => disc} and the second part of Lemma \ref{l: pre-gluing 1}.

\end{proof}


\subsection{The positive definite case.}

When the form $\L \, , \, \R$ on $\Lambda$ is {\em positive definite}, its rank $n$ equals its signature $\sigma(\Lambda)$, and we obtain a $\Q$-valued lift of the $\rho$-invariant by defining
\[ d: C(\Lambda) \to \Q, \]
\[ d([\chi]) = \min \left\{ \frac{ |\chi'| - n}{4} \; | \; \chi' \in [\chi] \right\}.\]
By definition, the pair $(C(\Lambda),d)$ is the {\em $d$-invariant} $d(\Lambda)$ of the positive definite lattice $\Lambda$.  It is clearly additive, in the sense that there exists a natural isomorphism
\[ (C(\Lambda), d) \overset{\sim}{\to} (C(\Lambda_1) \oplus C(\Lambda_2), d_1 + d_2) \]
whenever $\Lambda \cong \Lambda_1 \oplus \Lambda_2$.  We further define
\[ \Short(\Lambda) = \{ \chi \in \Char(\Lambda) \: | \: |\chi| \leq |\chi'|, \, \forall \chi' \in [\chi] \}, \]
and refer to elements of $\Short(\Lambda)$ as {\em short} characteristic covectors.  For example, 
\[ \Short(\Z^n) = \{ \chi \; | \; \L \chi, e_i \R = \pm 1 \; \forall i \},\]
where $\{e_1,\dots,e_n\}$ denotes an orthonormal basis for $\Z^n$.  Thus, $|\chi| = n$ for all $\chi \in \Short(\Z^n)$, so the mapping $d: C(\Z^n) \to \Q$ is zero.  Conversely, we have the following fundamental result.

\begin{thm}[Elkies \cite{elkies}]\label{t: elkies}

If $\Lambda$ is a rank $n$, unimodular, integral, positive definite lattice and $|\chi| \geq n$ for all $\chi \in \Char(\Lambda)$, then $\Lambda \cong \Z^n$, i.e. $\Lambda$ admits an orthonormal basis. \qed

\end{thm}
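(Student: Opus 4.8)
Since this is the genuinely analytic input, I would prove Elkies's theorem via theta series and the structure theory of modular forms for the theta group $\Gamma_\theta=\langle\,\tau\mapsto\tau+2,\ \tau\mapsto-1/\tau\,\rangle$. To a positive definite integral lattice $L$ of rank $n$ attach $\theta_L(\tau)=\sum_{v\in L}q^{\langle v,v\rangle}$ with $q=e^{\pi i\tau}$, together with its \emph{shadow theta series} $\theta_S(\tau)=\sum_{s\in S}q^{\langle s,s\rangle}$, where $S=\tfrac12\Char(L)\subset L^*\otimes\Q$; note that $\langle s,s\rangle=|\chi|/4$ when $\chi=2s$, so the lowest exponent appearing in $\theta_S$ is $m/4$ with $m:=\min\{|\chi|:\chi\in\Char(L)\}$. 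If $L$ is even then $0\in S$, so $m=0$ and the theorem is vacuous; assume $L$ odd. When $L$ is unimodular, Poisson summation shows that $\theta_L$ is a modular form of weight $n/2$ for $\Gamma_\theta$ (with the appropriate power of the theta multiplier), holomorphic at the cusps. The classical structure of this space is that $\{\theta_3^{\,n-8j}F^{\,j}:0\le j\le\lfloor n/8\rfloor\}$ is a basis, where $\theta_3(\tau)=\sum_k q^{k^2}$ is the rank-one theta series and $F=(\theta_2\theta_4)^4$ is the weight-$4$ form built from the other two Jacobi theta constants, with $F=16q+O(q^2)$ at $\infty$ (these monomials are independent since they have distinct lowest exponents $0,1,2,\dots$). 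Hence $\theta_L=\sum_{j\ge0}c_j\,\theta_3^{\,n-8j}F^{\,j}$, and matching constant terms at $\infty$, where $\theta_L=1+\cdots$, gives $c_0=1$.

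Next I would pass to the shadow cusp. The shadow theta series $\theta_S$ is, up to an explicit nonzero constant, a fixed $SL_2(\Z)$-transform of $\theta_L$ (this is the defining relation of the shadow at the level of theta series; cf. Conway--Sloane); concretely, the relevant $\gamma_0$ sends $\theta_3\mapsto a\,\theta_2$ for a nonzero constant $a$ (since the shadow of $\Z$ is $\tfrac12+\Z$, with theta series $\theta_2$), and sends $F$ to a nonzero constant times $(\theta_3\theta_4)^4$, which has \emph{nonvanishing} constant term at $\infty$. Applying the transform monomial by monomial, $\theta_S$ becomes, up to an overall nonzero constant, $\sum_{j\ge0}\pm c_j\,a^{-8j}\,\theta_2^{\,n-8j}(\theta_3\theta_4)^{4j}$. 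Since $\theta_2^{\,n-8j}=2^{\,n-8j}q^{(n-8j)/4}+\cdots$ while $(\theta_3\theta_4)^{4j}=1+\cdots$, the $j$-th summand has lowest exponent $(n-8j)/4$, strictly decreasing in $j$; writing $j_{\max}$ for the largest index with $c_j\ne0$, the lowest-order term of $\theta_S$ comes solely from that summand (its coefficient being a product of nonzero constants, there is no cancellation), so the lowest exponent of $\theta_S$ equals $(n-8j_{\max})/4$. Comparing with the intrinsic value $m/4$ yields $m=n-8j_{\max}\le n$, which already recovers Elkies's basic estimate.

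Now impose the hypothesis $|\chi|\ge n$ for all $\chi\in\Char(L)$, i.e. $m\ge n$. Combined with the bound above, $m=n$, which forces $j_{\max}=0$, that is, $c_j=0$ for all $j\ge1$, hence $\theta_L=\theta_3^{\,n}$. The coefficient of $q^1$ in $\theta_3^{\,n}=(1+2q+2q^4+\cdots)^n$ is $2n$, so $L$ contains exactly $2n$ vectors of norm $1$. If $u,v$ are two such vectors, then $\langle u,v\rangle\in\Z$ with $|\langle u,v\rangle|\le1$ by Cauchy--Schwarz, strictly unless $u=\pm v$; therefore the $2n$ norm-$1$ vectors are $\pm e_1,\dots,\pm e_n$ with $\langle e_i,e_j\rangle=\delta_{ij}$, spanning a sublattice isomorphic to $\Z^n$ of some finite index $k$ in $L$. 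From $1=\disc(\Z^n)=k^2\,\disc(L)=k^2$ we get $k=1$, so $L\cong\Z^n$, as desired.

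The main obstacle is the modular-forms bookkeeping underlying the first two steps: verifying that $\theta_L$ lies in the span of the explicit monomials $\theta_3^{\,n-8j}F^{\,j}$ (for the correct multiplier system on $\Gamma_\theta$), and, above all, pinning down the change-of-cusp transformation precisely enough to know both that $\theta_3\mapsto(\mathrm{const})\cdot\theta_2$ and that the weight-$4$ generator is carried to a form with nonzero constant term at $\infty$ — this is exactly what converts the ``top'' cusp-form coefficient of $\theta_L$ into the ``lowest'' coefficient of $\theta_S$. Everything else — the Poisson-summation modularity of $\theta_L$, the dimension count $\dim M_{n/2}(\Gamma_\theta)=\lfloor n/8\rfloor+1$, and the final elementary lattice argument — is routine. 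One could equally phrase the analytic input in terms of the weight-$n/2$ vector-valued modular form $(\theta_L,\theta_S)$ for the Weil representation of $SL_2(\Z)$, but the computation is the same in substance.
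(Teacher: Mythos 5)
The paper offers no proof of this statement at all — it is quoted from Elkies's paper as a known external result, with the introduction remarking that its only known proof uses modular forms — and your sketch is exactly that argument: write $\theta_L$ of an odd unimodular lattice in the span of the monomials $\theta_3^{\,n-8j}\bigl((\theta_2\theta_4)^4\bigr)^{j}$, apply the shadow transform $\tau\mapsto 1-1/\tau$ so that the largest index $j$ with nonzero coefficient controls the minimal shadow (hence characteristic) norm $n-8j_{\max}\le n$, conclude $\theta_L=\theta_3^{\,n}$ under the hypothesis, and finish with the $2n$ unit vectors generating an orthonormal $\Z^n$ of index one. Modulo the two standard inputs you explicitly flag as bookkeeping (the basis/dimension statement for weight-$n/2$ forms on $\Gamma_\theta$ with the theta multiplier, and the identity realizing $\theta_S$ as the weight-$n/2$ slash of $\theta_L$ under $\tau\mapsto 1-1/\tau$, proved by Poisson summation over the even sublattice), your sketch is correct, including the elementary endgame.
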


Observe that in the construction of $\S$\ref{ss: gluing}, each map $r_i$ restricts to a map
\begin{equation}\label{e: short map}
\Short(\Lambda) \to \Short(\Lambda_i),
\end{equation}
where $\Lambda = \Lambda_1 \oplus_\varphi \Lambda_2$.  Indeed, given $\chi_1 + \chi_2 \in \Short(\Lambda)$ and any $\chi'_i \in \Char(\Lambda_i)$ with $\chi'_i \in [\chi_i]$, $i=1,2$, we have $\chi'_1 + \chi'_2 \in \Char(\Lambda)$, and
\[ |\chi'_1| + |\chi'_2| = |\chi'_1+\chi'_2| \geq |\chi_1 + \chi_2| = |\chi_1| + |\chi_2|, \]
which shows that $|\chi'_i| \geq |\chi_i|$, $i=1,2$.   In general, the restiction \eqref{e: short map} need not surject.  For example, if
\[ \Lambda_1 = \span(e_1 + 2e_2), \, \Lambda_2 = \span(-2e_1+e_2) \subset \Z^2,\]
then $|\Short(\Z^2)| =4$, whereas $|\Short(\Lambda_1)| \geq |C(\Lambda_1)| = \disc(\Lambda_1) = 5$.  However, it is clear that $\Short(\Lambda) \to \Short(\Lambda_1)$ surjects iff $\Short(\Lambda) \to \Short(\Lambda_2)$ does.

Using Elkies's Theorem, we obtain the following refinement of Lemma \ref{l: pre-gluing 2}.

\begin{prop}\label{p: gluing}

Suppose that $\Lambda_1, \Lambda_2$ are a pair of complementary, primitive sublattices of $\Z^n$, and the restriction $\Short(\Z^n) \to \Short(\Lambda_1)$ surjects.
Then there exists a natural isomorphism
\begin{equation}\label{e: phi d}
\varphi: (C(\Lambda_1),d_1) \overset{\sim}{\to} (C(\Lambda_2),-d_2).
\end{equation}
Conversely, suppose that $\Lambda_1$ and $\Lambda_2$ are a pair of integral lattices and there exists an isomorphism as in \eqref{e: phi d}.  Then $\Lambda_1 \oplus_\varphi \Lambda_2 \cong \Z^n$, and the restrictions $\Short(\Z^n) \to \Short(\Lambda_i)$ surject.

\end{prop}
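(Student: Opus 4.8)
The plan is to deduce Proposition~\ref{p: gluing} from Lemma~\ref{l: pre-gluing 2} together with Elkies's Theorem (Theorem~\ref{t: elkies}), using van der Blij's Theorem (Theorem~\ref{t: van der blij}) to connect the $d$-invariant with the $\rho$-invariant. The key structural observation is that once we know $\Lambda_1\oplus_\varphi\Lambda_2\cong\Z^n$, the map $d$ on a sublattice $\Lambda_i$ of $\Z^n$ is controlled by whether short characteristic covectors of $\Lambda_i$ lift to $\Z^n$, where they automatically have norm $n$.

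For the forward direction, I would start from Lemma~\ref{l: pre-gluing 2}, which already supplies a natural isomorphism $\varphi:(C(\Lambda_1),\rho_1)\overset{\sim}{\to}(C(\Lambda_2),-\rho_2)$ covering the discriminant-form isomorphism. It remains to upgrade $\rho$ to $d$. Fix $[\chi_1]\in C(\Lambda_1)$ and let $[\chi_2]=\varphi([\chi_1])$. By the surjectivity hypothesis, choose a short $\psi\in\Short(\Z^n)$ restricting to a short representative of $[\chi_1]$; then, as noted around~\eqref{e: short map}, its other restriction is a short representative of $[\chi_2]$. Since $|\psi|=n$ (as $\psi\in\Short(\Z^n)$) and $n=\sigma(\Z^n)=\sigma(\Lambda_1)+\sigma(\Lambda_2)$, the minimal norms in the two classes satisfy
\[
d_1([\chi_1]) = \frac{|\chi_1|_{\min}-\sigma(\Lambda_1)}{4}, \qquad
d_2([\chi_2]) = \frac{|\chi_2|_{\min}-\sigma(\Lambda_2)}{4},
\]
and $|\chi_1|_{\min}+|\chi_2|_{\min}=|\psi|=n$, so $d_1([\chi_1])+d_2([\chi_2])=0$. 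Hence $\varphi$ carries $d_1$ to $-d_2$, and since it already respects the torsor structure (from Lemma~\ref{l: pre-gluing 2}), it is the desired isomorphism of $d$-invariants.

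For the converse, given an isomorphism as in~\eqref{e: phi d}, restricting $d_i$ to its residue mod $2\Z$ yields an isomorphism of $\rho$-invariants, so Lemma~\ref{l: pre-gluing 2} applies: $\Lambda:=\Lambda_1\oplus_\varphi\Lambda_2$ is integral, unimodular, and positive definite (the latter since both summands are, each being positive definite as a lattice admitting a $d$-invariant in the sense of $\S$\ref{ss: gluing}), with $\Lambda_1,\Lambda_2$ complementary primitive sublattices. I must show $|\chi|\geq n$ for every $\chi\in\Char(\Lambda)$; then Elkies's Theorem gives $\Lambda\cong\Z^n$. Writing $\chi=\chi_1+\chi_2$ with $\varphi([\chi_1])=[\chi_2]$, we get $|\chi|=|\chi_1|+|\chi_2|\geq 4\bigl(d_1([\chi_1])+\tfrac{\sigma(\Lambda_1)}{4}\bigr)+4\bigl(d_2([\chi_2])+\tfrac{\sigma(\Lambda_2)}{4}\bigr)$; but $d_1([\chi_1])+d_2([\chi_2])=0$ by~\eqref{e: phi d}, and $\sigma(\Lambda_1)+\sigma(\Lambda_2)=\rk\Lambda_1+\rk\Lambda_2=n$, so $|\chi|\geq n$. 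Thus $\Lambda\cong\Z^n$. Finally, to see the restrictions $\Short(\Z^n)\to\Short(\Lambda_i)$ surject: given a short $\chi_i\in\Short(\Lambda_i)$, the fact that $d_i([\chi_i])=\tfrac{|\chi_i|-\sigma(\Lambda_i)}{4}$ combined with $d_1([\chi_1])+d_2([\chi_2])=0$ for $[\chi_2]=\varphi([\chi_i])$ (taking $i=1$ for concreteness, symmetric case $i=2$ handled identically) forces $|\chi_1|+|\chi_2|_{\min}=n$; choosing $\chi_2$ short in its class, $\chi_1+\chi_2\in\Char(\Lambda)=\Char(\Z^n)$ has norm $n$, hence lies in $\Short(\Z^n)$ and restricts to $\chi_1$.

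The main obstacle is the bookkeeping in the forward direction, namely verifying carefully that the $d$-lift of $\varphi$ is the \emph{same} bijection already produced in Lemma~\ref{l: pre-gluing 2}—so that the torsor structure is inherited for free rather than re-checked—and that short covectors of $\Z^n$ restrict simultaneously to short covectors in both factors (the norm-additivity argument around~\eqref{e: short map}), including the remark there that surjectivity onto $\Short(\Lambda_1)$ is equivalent to surjectivity onto $\Short(\Lambda_2)$. Everything else is a direct substitution into the defining formulas together with the two cited theorems.
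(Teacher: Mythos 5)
Your proposal is correct and follows essentially the same route as the paper: the forward direction lifts a short representative through the assumed surjection and uses additivity of norms and ranks, and the converse reduces the $d$-isomorphism to a $\rho$-isomorphism, glues via Lemma~\ref{l: pre-gluing 2}, applies Elkies's theorem, and then reverses the norm count to get surjectivity of $\Short(\Z^n)\to\Short(\Lambda_i)$. The only (harmless) deviation is that you verify $|\chi|\geq n$ for every characteristic covector of the glue lattice directly from the minimality in the definition of $d$, whereas the paper checks that a short covector of the glue lattice has norm exactly $n$; these are trivially equivalent.
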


\begin{proof} Set $n_i = \rk(\Lambda_i)$, $i=1,2$.

\noindent ($\implies$) We use the isomorphism $\varphi$ of Lemma \ref{l: pre-gluing 2}.  Suppose that $\chi_1$ is a short representative for its class in $C(\Lambda_1)$.  Since $\Short(\Z^n) \to \Short(\Lambda_1)$ surjects, we have $\chi_1 + \chi_2 \in \Short(\Z^n)$ for some $\chi_2 \in \Short(\Lambda_2)$.  Since
\begin{equation}\label{e: short}
\frac{|\chi_1+\chi_2| -n}{4} = \frac{|\chi_1|-n_1}{4} + \frac{|\chi_2|-n_2}{4} = d_1([\chi_1])+d_2([\chi_2]) = d_1([\chi_1]) + d_2(\varphi([\chi_2]))
\end{equation}
and the left-most term is zero, it follows that $\varphi$ yields the desired isomorphism.

\noindent ($\impliedby$) Select any $\chi \in \Short(\Lambda_1 \oplus_\varphi \Lambda_2)$.  Then $\chi = \chi_1 + \chi_2$ with $\varphi([\chi_1]) = [\chi_2]$ and $\chi_i \in \Short(\Lambda_i)$, $i=1,2$.  Again, \eqref{e: short} holds, and now the right-most term is zero, so $|\chi| = n$ and $\Lambda_1 \oplus_\varphi \Lambda_2 \cong \Z^n$ by Theorem \ref{t: elkies}.  Furthermore, if $\chi_1 \in \Short(\Lambda_1)$ is given and $\chi_2 \in \Short(\Lambda_2)$ is chosen so that $\varphi([\chi_1]) = [\chi_2]$, then \eqref{e: short} applies again to show that $\chi_1 + \chi_2 \in \Short(\Z^n)$.  Hence the restriction $\Short(\Z^n) \to \Short(\Lambda_1)$ surjects.

\end{proof}


\subsection{Rigid embeddings.}

The following Proposition establishes a condition under which a lattice admits an essentially unique embedding into $\Z^n$.  It plays a key role in the proof of Theorem \ref{t: main graph}.
\begin{prop}\label{p: rigid}

Let $\Lambda$ denote a lattice, $B_\Lambda$ a basis for $\Lambda$, $Z_i$ a lattice with orthonormal basis $B_i$, and  $\iota_i : \Lambda \into Z_i$ an embedding, $i=1,2$.  Suppose that $\iota_1$ has the property that
\begin{equation}\label{e: 0/1}
\L \iota_1(x), e \R \in \{0,1\}, \quad \forall x \in B_\Lambda, e \in B_1,
\end{equation}
and
\begin{equation}\label{e: full support}
\forall \, e \in B_1, \exists \, x \in B_\Lambda \quad s.t. \quad \L\iota_i(x), e \R \ne 0;
\end{equation}
and suppose that both restriction maps $Z_i^* \to \Lambda^*$ induce surjections
\[ r_i : \Short(Z_i) \to \Short(\Lambda). \]
Then there exists an embedding $\iota: Z_1 \into Z_2$ such that $\iota_2 = \iota \circ \iota_1$.

\end{prop}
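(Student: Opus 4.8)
The plan is to use the short characteristic covectors of $Z_1$ as the raw material for constructing the embedding $\iota: Z_1 \into Z_2$, basis vector by basis vector. First I would exploit the hypothesis \eqref{e: 0/1} together with \eqref{e: full support}: since each $x \in B_\Lambda$ pairs with each $e \in B_1$ in $\{0,1\}$, the vector $\iota_1(x)$ is literally a $0/1$ sum of elements of $B_1$, and by \eqref{e: full support} every $e \in B_1$ appears in at least one such sum. I would first record the combinatorial consequence of this: for $e, e' \in B_1$, write $e \sim e'$ if $\L \iota_1(x), e \R = \L \iota_1(x), e' \R$ for every $x \in B_\Lambda$; the surjectivity of $r_1: \Short(Z_1) \to \Short(\Lambda)$ should force $e \sim e'$ to imply $e = e'$ (otherwise $e - e'$ and $e + e'$ would give two characteristic covectors of $Z_1$ restricting to the same short covector of $\Lambda$ but with the same norm, and one could flip signs freely, contradicting that short covectors of $\Lambda$ have unique preimages — more precisely, that the restriction is injective enough). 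In other words, the basis $B_1$ is \emph{recovered} from the embedding data $\{\L\iota_1(x), \cdot\R : x \in B_\Lambda\}$ on $B_1$ up to the sign ambiguity $e \mapsto -e$.

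Next I would build the map on basis elements. Fix $e \in B_1$. The goal is to produce $\iota(e) \in Z_2$ — a $\pm$ combination that ought to again be a sum of some subset of $B_2$, or at worst a single element of $B_2$ up to sign. The mechanism is: a short characteristic covector $\chi_1 \in \Short(Z_1)$ is determined by its signs $\L \chi_1, e \R = \pm 1$ on $B_1$; its restriction $r_1(\chi_1) \in \Short(\Lambda)$ then lifts (by surjectivity of $r_2$) to some $\chi_2 \in \Short(Z_2)$, which records signs on $B_2$. Running over all $2^{|B_1|}$ sign choices and comparing how the induced short covector on $\Lambda$ changes when a single sign $\L\chi_1, e\R$ is flipped, one isolates which elements of $B_2$ are "controlled by" $e$: flipping the sign at $e$ changes $r_1(\chi_1)$ by $\mp 2 \iota_1^*(e)$ where $\iota_1^*: Z_1^* \to \Lambda^*$ is the restriction (using that $\iota_1(B_\Lambda)$ are $0/1$ vectors, so $e$ contributes to $\L\chi_1, \iota_1(x)\R$ exactly when $\L\iota_1(x), e\R = 1$), and on the $Z_2$ side this same change is realized by flipping signs on a well-defined subset $S_e \subset B_2$. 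I would then define $\iota(e) := \sum_{f \in S_e} \pm f$ with signs pinned down by consistency. The $\sim$-rigidity step from the first paragraph, applied now on both sides, should show the $S_e$ partition $B_2$ and that $\iota$ is an isometric embedding intertwining $\iota_1$ and $\iota_2$; equivalently, $\iota_2(x) = \iota(\iota_1(x))$ because both sides are the unique $0/1$ vector in $Z_2$ with the prescribed pairings against $\Short(Z_2)$.

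The main obstacle I anticipate is the sign bookkeeping: a priori $\iota(e)$ is only defined up to an overall sign (and perhaps up to an automorphism of $Z_2$ permuting the elements of $S_e$), and one must check these local choices can be made globally coherent so that $\iota$ is additive and respects $\iota_1, \iota_2$ simultaneously. The resolution should be that \eqref{e: full support} for $\iota_1$ forces the partition-block $S_e$ to be a single element — because if $e$ "spawned" two orthonormal vectors in $Z_2$, then pulling back $\Short(Z_2) \to \Short(\Lambda)$ would produce short covectors of $\Lambda$ with extra independent sign freedom not accounted for on the $Z_1$ side, violating the joint injectivity forced by both surjections landing on the \emph{same} finite set $\Short(\Lambda)$. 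Once $|S_e| = 1$ and the sign is the one making $\L\iota(e), \cdot\R$ match $\L e, \cdot\R$ under $\iota_1$-then-$\iota_2$ comparison on overlapping coordinates, additivity of $\iota$ is automatic and $\iota_2 = \iota \circ \iota_1$ follows by evaluating both against the (separating) family $\Short(Z_2)$. I would also remark that \eqref{e: full support} is stated for $\iota_i$ — presumably meaning $i=1$ suffices, or one notes it then holds for $i=2$ as a consequence — and flag that the argument only uses the $0/1$ property for $\iota_1$, with $\iota_2$ constrained purely through the short-covector correspondence.
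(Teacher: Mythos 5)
The first step of your argument is wrong, and the error is fatal to the construction that follows. You claim that surjectivity of $r_1 : \Short(Z_1) \to \Short(\Lambda)$ forces distinct $e, e' \in B_1$ to be distinguished by some $\iota_1(x)$, $x \in B_\Lambda$. Surjectivity gives no injectivity of any kind: take $Z_1 = Z_2 = \Z^2$ with orthonormal basis $\{e_1,e_2\}$, $\Lambda = \span(e_1+e_2)$, and $\iota_1 = \iota_2$ the inclusion. All hypotheses of the proposition hold (\eqref{e: 0/1}, \eqref{e: full support}, and both restrictions carry the four covectors $\pm e_1 \pm e_2$ onto the three elements of $\Short(\Lambda)$), yet $e_1 \sim e_2$ while $e_1 \ne e_2$, and the short covector of $\Lambda$ vanishing on $e_1+e_2$ has the two preimages $\pm(e_1-e_2)$. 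This is not a marginal case: it is exactly what occurs in the intended application, for parallel edges in the cut lattice and for edges in series in the flow lattice. Because lifts are not unique, your set $S_e$ is not well defined --- after flipping $\chi_1$ at $e$, the new lift of the restricted covector can be chosen to differ from the old one on different subsets of $B_2$ (in the example, on $\{e_1\}$ or on $\{e_2\}$) --- and the claim $|S_e|=1$ rests on a ``joint injectivity'' that simply does not hold. You also use without proof that $\iota_2(x)$ has all coordinates in $\{0,\pm 1\}$, so that passing between lifts amounts to sign flips; that is a genuine statement needing an argument.

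The paper's proof is organized precisely to avoid element-by-element matching, which the example shows cannot be done canonically. For $Y \subset B_\Lambda$ it considers $S(Y) = \{\chi \in \Short(\Lambda) \; : \; \L \chi, y \R = |y|, \ \forall y \in Y\}$ and $D(x,Y) = \tfrac12\bigl(\max_{\chi \in S(Y)} \L \chi,x \R - \min_{\chi \in S(Y)} \L \chi,x \R\bigr)$. Since both $r_1$ and $r_2$ surject, $D(x,Y)$ can be evaluated by extremizing inside $Z_1$ and inside $Z_2$: with $Y = \varnothing$ this gives $|\L \iota_2(x), f \R| \in \{0,1\}$; with $Y = \{y\}$ it gives $\L x,y \R = |\supp(\iota_2(x)) \cap \supp(\iota_2(y))|$, which permits flipping signs in $B_2$ to obtain a basis with the $0/1$ property for $\iota_2$; and then inclusion--exclusion over partitions $B_\Lambda = X \cup Y$ shows that the corresponding atoms $\bigcap_{x\in X}\supp(\iota_i(x)) - \bigcup_{y\in Y}\supp(\iota_i(y))$, $i=1,2$, have equal cardinality. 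An arbitrary bijection between each pair of atoms then assembles into $\iota$ with $\iota_2 = \iota \circ \iota_1$; no canonical correspondence of individual basis vectors is available or needed. If you wish to keep your sign-flip heuristic, it must be recast so that only these atom cardinalities, not individual coordinates of $B_1$ and $B_2$, are compared.
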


We work towards the proof of Proposition \ref{p: rigid} through a sequence of Lemmas.
Define
\[ \supp^\pm(x) = \{e \in B_i\; | \; \pm \L x, e \R > 0 \}, \quad \supp(x) = \supp^+(x) \cup \supp^-(x), \quad \forall x \in Z_i, i=1,2.\]
Hence  $\supp^-(\iota_1(x)) = \varnothing$ and $|x| = |\supp(\iota_1(x))|$, $\forall x \in B_\Lambda$.  Note as well that
\[ \L \chi, \iota_i(x) \R = \L r_i(\chi), x \R, \quad \forall x \in B_\Lambda, \chi \in \Short(Z_i), i=1,2.\]
Given a subset $Y \subset B_\Lambda$, define
\[ S(Y) = \{ \chi \in \Short(\Lambda) \; | \; \L \chi, y \R = |y|, \; \forall \, y \in Y \}\]
and \[ S_i(Y) = \{ \chi \in \Short(Z_i) \; | \; \L \chi, \iota_i(y) \R = |y|, \; \forall \, y \in Y \}, \quad i=1,2. \]
In particular, $S(\varnothing) = \Short(\Lambda)$.  Note, crucially, that since $r_i$ surjects, we have
\begin{equation}\label{e: S and r}
S(Y) = \{ r_i(\chi) \; | \; \chi \in S_i(Y) \}, \quad i=1,2.
\end{equation}
On the other hand, \eqref{e: 0/1} implies that
\[ S_1(Y) = \{ \chi \in \Short(Z_1) \; | \; \bigcup_{y \in Y} \supp(\iota_1(y)) \subset \supp^+(\chi) \}. \]
Given an element $x \in B_\Lambda$, define
\[ M(x,Y) = \max\{ \L \chi, x \R \; | \; \chi \in S(Y) \}, \quad  m(x,Y) = \min\{ \L \chi, x \R \; | \; \chi \in S(Y) \}, \]
and
\[ D(x,Y) = \frac{1}{2}(M(x,Y) - m(x,Y)). \]
Hence $M(x,Y)$ is attained by $r_1(\chi_0)$ and $m(x,Y)$ is attained by $r_1(\chi_Y)$, where $\chi_0, \chi_Y \in \Short(Z_1)$ are defined by
\[ \chi_0 = \sum_{e \in B_1} e, \quad \supp^+(\chi_Y) = \bigcup_{y \in Y} \supp(\iota_1(y)).\]
It follows that
\begin{equation}\label{e: D1}
D(x,Y) = | \supp(\iota_1(x)) - \bigcup_{y \in Y} \supp(\iota_1(y)) |.
\end{equation}

\begin{lem}\label{l: 0/1 prep}

We have
\[|  \L \iota_2(x), f \R | \in \{0,1\}, \quad \forall x \in B_\Lambda, f \in B_2. \]
In particular,
\begin{equation}\label{e: supps}
|x| = |\supp(\iota_1(x))| = |\supp(\iota_2(x))|, \quad \forall x \in B_\Lambda.
\end{equation}

\end{lem}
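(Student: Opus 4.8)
The plan is to compare the behavior of short characteristic covectors on $Z_2$ with the combinatorial data extracted from $\iota_1$, using the surjectivity of $r_2 : \Short(Z_2) \to \Short(\Lambda)$ to transfer the sharp $\{0,1\}$-bound already known on the $Z_1$ side. First I would observe that for any $f \in B_2$ the covectors $\chi$ and $\chi - 2\langle \chi, f\rangle f$ are both characteristic for $Z_2$, lie in the same class of $C(Z_2)$, and differ in norm by $-4\langle\chi,f\rangle^2 + \text{(linear in }\langle\chi,f\rangle)$; consequently a short $\chi \in \Short(Z_2)$ must have $\langle\chi,f\rangle = \pm 1$, i.e.\ $\Short(Z_2) = \{\chi \mid \langle\chi,f\rangle=\pm 1 \;\forall f \in B_2\}$, exactly as for $\Z^n$. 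Thus $|\chi|=\dim Z_2$ for every $\chi \in \Short(Z_2)$, and likewise $|\chi|=\dim Z_1$ for every $\chi\in\Short(Z_1)$.

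Next I would pin down $|\langle\iota_2(x),f\rangle|$. Fix $x \in B_\Lambda$ and $f \in B_2$. Using $r_2$-surjectivity, choose $\chi \in \Short(Z_2)$ with $r_2(\chi)$ attaining $M(x,\varnothing) = \max\{\langle\psi,x\rangle \mid \psi\in\Short(\Lambda)\}$ and another with $r_2(\chi')$ attaining $m(x,\varnothing)$; since $\langle\chi,\iota_2(x)\rangle = \langle r_2(\chi),x\rangle$, and since flipping the sign of any single coordinate $\langle\chi,f\rangle$ keeps $\chi$ short, the set $\{\langle\chi,\iota_2(x)\rangle : \chi\in\Short(Z_2)\}$ is $\{\sum_{f} \varepsilon_f \langle\iota_2(x),f\rangle : \varepsilon_f = \pm 1\}$. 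Its diameter is therefore $2\sum_{f\in\supp(\iota_2(x))} |\langle\iota_2(x),f\rangle|$, so $M(x,\varnothing)-m(x,\varnothing) = 2\sum_{f}|\langle\iota_2(x),f\rangle|$; the same computation on the $Z_1$ side with \eqref{e: 0/1} gives $M(x,\varnothing)-m(x,\varnothing) = 2|\supp(\iota_1(x))| = 2|x|$. Hence $\sum_{f\in B_2}|\langle\iota_2(x),f\rangle| = |x|$. On the other hand $|x| = |\iota_2(x)| = \sum_{f} \langle\iota_2(x),f\rangle^2$. Comparing $\sum |a_f| = \sum a_f^2$ for the integers $a_f = \langle\iota_2(x),f\rangle$ forces every $a_f \in \{-1,0,1\}$, which is the first assertion. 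The second assertion \eqref{e: supps} is then immediate: $|x| = \sum_f a_f^2 = |\{f : a_f \ne 0\}| = |\supp(\iota_2(x))|$, and the $\iota_1$ equality was already recorded before the Lemma.

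The step I expect to be the genuine obstacle is establishing that the diameter of $\{\langle\chi,\iota_2(x)\rangle : \chi\in\Short(Z_2)\}$ really equals $M(x,\varnothing)-m(x,\varnothing)$ rather than merely bounding it --- i.e.\ that the sign-flip freedom on $\Short(Z_2)$ is unobstructed and that $r_2$ surjecting onto $\Short(\Lambda)$ lets us realize both extreme values of $\langle\cdot,x\rangle$ over $\Short(\Lambda)$ by short covectors on $Z_2$. The first point is the characteristic-covector sign-flip observation from the opening paragraph (each coordinate may be toggled independently); the second is exactly the hypothesis \eqref{e: S and r} specialized to $Y = \varnothing$. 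Once those are in hand, the numerical identity $\sum|a_f| = \sum a_f^2$ does all the remaining work, and no case analysis on the structure of $Z_2$ or $\Lambda$ is needed.
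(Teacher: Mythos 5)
Your proposal is correct and is essentially the paper's own argument: both compute $D(x,\varnothing)=\tfrac12\bigl(M(x,\varnothing)-m(x,\varnothing)\bigr)$ in two ways --- on the $Z_1$ side via \eqref{e: 0/1} and surjectivity of $r_1$, giving $|x|=|\supp(\iota_1(x))|$, and on the $Z_2$ side via surjectivity of $r_2$ and the $\pm1$ description of $\Short(Z_2)$, giving $\sum_f|\langle\iota_2(x),f\rangle|$ --- and then conclude from $\sum_f|a_f|=\sum_f a_f^2$ that each $a_f\in\{-1,0,1\}$. One cosmetic slip: the element $\chi-2\langle\chi,f\rangle f$ has the \emph{same} norm as $\chi$ (it is the reflection), so to rederive $\Short(Z_2)=\{\chi\mid\langle\chi,f\rangle=\pm1\ \forall f\}$ you should subtract $2\,\mathrm{sign}(\langle\chi,f\rangle)f$ instead; in any case this description is already recorded in the paper for $\Z^n$, so nothing essential is affected.
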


\begin{proof}

Choose $x \in B_\Lambda$.  By \eqref{e: D1} we obtain
\begin{equation}\label{e: M}
D(x,\varnothing) = |\supp(\iota_1(x))| = |x| = |\iota_2(x)| = \sum_{f \in B_2} \L \iota_2(x), f \R^2 .
\end{equation}
On the other hand, $M(x,\varnothing)$ and $m(x,\varnothing)$ are attained by $r_2(\chi_M)$ and $r_2(\chi_m)$, respectively, where $\chi_M, \chi_m \in \Short(Z_2)$ are defined by
\[ \supp^+(\chi_M) = \supp^+(\iota_2(x)), \quad \supp^+(\chi_M) = \supp^-(\iota_2(x)).\]
Hence
\[ D(x,\varnothing) = \sum_{f \in B_2} |\L \iota_2(x), f \R|. \]
Comparing with \eqref{e: M} and the inequality $| \L \iota_2(x), f \R| \leq \L \iota_2(x),f \R^2$ yields
\[ | \L \iota_2(x), f \R| = \L \iota_2(x),f \R^2, \quad \forall f \in B_2,\]
from which the statement of the Lemma follows.

\end{proof}

\begin{lem}\label{l: D2}

We have
\[ \L x, y \R =| \supp(\iota_1(x)) \cap \supp(\iota_1(y)) | = | \supp(\iota_2(x)) \cap \supp(\iota_2(y)) |, \quad \forall x,y \in B_\Lambda. \]
\end{lem}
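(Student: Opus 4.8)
The plan is to handle the two equalities separately. The first is immediate: hypothesis \eqref{e: 0/1} gives $\langle \iota_1(x), e \rangle \in \{0,1\}$ for every $e \in B_1$, so in the expansion
\[
\langle x, y \rangle = \langle \iota_1(x), \iota_1(y) \rangle = \sum_{e \in B_1} \langle \iota_1(x), e \rangle \, \langle \iota_1(y), e \rangle
\]
each summand equals $1$ when $e \in \supp(\iota_1(x)) \cap \supp(\iota_1(y))$ and $0$ otherwise, whence $\langle x, y \rangle = |\supp(\iota_1(x)) \cap \supp(\iota_1(y))|$.

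The second equality requires more care, because the analogous expansion through $\iota_2$ gives only a \emph{signed} count: Lemma~\ref{l: 0/1 prep} permits $\langle \iota_2(x), f \rangle \in \{0, \pm 1\}$, and the $(-1)$-entries are the obstruction. To eliminate them I would compute the quantity $D(x, \{y\})$ in two ways, paralleling the proof of Lemma~\ref{l: 0/1 prep} (which used $D(x, \varnothing)$). On the $\iota_1$ side, \eqref{e: D1} with $Y = \{y\}$ together with the first equality and the fact that $\supp^-(\iota_1(x)) = \varnothing$ yields
\[
D(x, \{y\}) = |\supp(\iota_1(x))| - |\supp(\iota_1(x)) \cap \supp(\iota_1(y))| = |x| - \langle x, y \rangle.
\]
On the $\iota_2$ side, I would use \eqref{e: S and r} to realize $M(x, \{y\})$ and $m(x, \{y\})$ through covectors $\chi \in S_2(\{y\}) \subset \Short(Z_2)$. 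Such a $\chi$ has all coordinates in $\{\pm 1\}$, and the constraint $\langle \chi, \iota_2(y) \rangle = |y| = |\supp(\iota_2(y))|$ (using \eqref{e: supps}) forces the coordinate of $\chi$ at each $f \in \supp(\iota_2(y))$ to be $\operatorname{sign}\langle \iota_2(y), f \rangle$, while leaving the coordinates at $f \notin \supp(\iota_2(y))$ free in $\{\pm 1\}$. Choosing the free coordinates to agree with (resp.\ to oppose) $\iota_2(x)$ computes $M$ (resp.\ $m$); the constrained part is identical in both and cancels in the difference, giving
\[
D(x, \{y\}) = \sum_{f \notin \supp(\iota_2(y))} |\langle \iota_2(x), f \rangle| = |\supp(\iota_2(x)) \setminus \supp(\iota_2(y))| = |x| - |\supp(\iota_2(x)) \cap \supp(\iota_2(y))|,
\]
invoking \eqref{e: supps} once more. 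Equating the two expressions for $D(x, \{y\})$ produces $\langle x, y \rangle = |\supp(\iota_2(x)) \cap \supp(\iota_2(y))|$, as desired.

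I expect the main obstacle to be the sign bookkeeping in the $\iota_2$ computation: one must check that the coordinates of a competitor $\chi \in S_2(\{y\})$ on $\supp(\iota_2(y))$ are entirely pinned down, and those off it genuinely unconstrained, so that the forced part cancels cleanly in $M(x, \{y\}) - m(x, \{y\})$. As a byproduct one could then compare the signed expansion $\langle x, y \rangle = \sum_f \langle \iota_2(x), f \rangle \langle \iota_2(y), f \rangle$ with the unsigned count to conclude that the two sign patterns agree on the common support, but the argument above bypasses the need to establish that directly. The remaining reductions, via \eqref{e: D1}, \eqref{e: S and r}, and \eqref{e: supps}, are routine.
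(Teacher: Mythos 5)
Your proposal is correct and follows essentially the same route as the paper: the first equality via the $\{0,1\}$ expansion through $\iota_1$, and the second by computing $D(x,\{y\})$ on the $\iota_2$ side (using the surjectivity \eqref{e: S and r} and the fact that covectors in $S_2(\{y\})$ are pinned on $\supp(\iota_2(y))$ and free elsewhere, so the pinned part cancels in $M-m$), then comparing with \eqref{e: D1} and \eqref{e: supps}. The paper merely writes the extremizers $\chi_M,\chi_m$ explicitly in terms of $\supp^{\pm}$, which is the sign bookkeeping you flagged; your description of it is accurate.
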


\begin{proof}

Choose $x,y \in B_\Lambda$. We obtain
\[ S_2(\{y\}) = \{ \chi \in \Short(Z_2) \; | \; \supp^\pm(y) \subset \supp^\pm(\chi) \}. \]
It follows that $M(x,\{y\})$ and $m(x,\{y\})$ are attained by $r_2(\chi_M)$ and $r_2(\chi_m)$, respectively, for the elements $\chi_M, \chi_m \in \Short(Z_2)$ defined by
\[ \supp^+(\chi_M) = \supp^+(\iota_2(y)) \cup \supp^+(\iota_2(x)) - \supp^-(\iota_2(y)), \]
\[ \supp^+(\chi_m) = \supp^+(\iota_2(y)) \cup \supp^-(\iota_2(x)) - \supp^-(\iota_2(y)). \]
We obtain
\begin{equation}\label{e: M diff 2}
D(x,\{y\}) = |\supp(\iota_2(x)) - \supp(\iota_2(y))|.
\end{equation}
The first equality in the Lemma follows from \eqref{e: 0/1}, while the second now follows on combination of \eqref{e: D1}, \eqref{e: supps}, and \eqref{e: M diff 2}.

\end{proof}

\begin{lem}\label{l: 0/1}

There exists an orthonormal basis $B'_2$ for $Z_2$ such that 
\begin{equation}\label{e: 0/1 2}
\L \iota_2(x), f \R \in \{0,1\}, \quad \forall x \in B_\Lambda, f \in B'_2.
\end{equation}

\end{lem}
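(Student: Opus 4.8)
The plan is to obtain $B_2'$ from $B_2$ by flipping the signs of a suitable subset of its vectors. Lemma~\ref{l: 0/1 prep} already tells us that $\L \iota_2(x), f \R \in \{-1,0,1\}$ for all $x \in B_\Lambda$ and $f \in B_2$, so it suffices to produce a sign function $\epsilon : B_2 \to \{\pm 1\}$ with $\epsilon(f)\L \iota_2(x), f \R \geq 0$ for every $x \in B_\Lambda$ and $f \in B_2$; then $B_2' := \{\epsilon(f)\,f \; | \; f \in B_2\}$ is again an orthonormal basis for $Z_2$, and \eqref{e: 0/1 2} holds by construction, since each $\epsilon(f)\L\iota_2(x),f\R$ is a nonnegative integer of absolute value at most $1$.

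The crux is to show such an $\epsilon$ exists, and for that I would prove the following sign-consistency statement: for each fixed $f \in B_2$, the integers $\L \iota_2(x), f \R$, as $x$ ranges over $B_\Lambda$, are never of opposite sign. Granting this, one sets $\epsilon(f)$ equal to the common sign of the nonzero such pairings (and $\epsilon(f) = 1$ if they all vanish), which settles the Lemma. To prove the consistency statement I would compare two expressions for $\L x, x' \R$ with $x, x' \in B_\Lambda$. On the one hand, since $\iota_2$ is an isometric embedding and $B_2$ is orthonormal, $\L x, x' \R = \sum_{f \in B_2} \L \iota_2(x), f \R \L \iota_2(x'), f \R$, a sum whose terms each lie in $\{-1,0,1\}$ and are nonzero exactly for $f \in \supp(\iota_2(x)) \cap \supp(\iota_2(x'))$. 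On the other hand, Lemma~\ref{l: D2} gives $\L x, x' \R = |\supp(\iota_2(x)) \cap \supp(\iota_2(x'))|$, which is precisely the number of those nonzero terms. A sum of $\pm 1$'s equal to its own cardinality forces each term to equal $+1$; hence $\L \iota_2(x), f \R$ and $\L \iota_2(x'), f \R$ have the same sign whenever both are nonzero. This is exactly the needed consistency, applied to the pairs $(x,x')$.

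The only real obstacle here is recognizing the right pair of identities to play off one another; once that is done, the argument is a one-line counting observation. I would also note that the analogue of \eqref{e: full support} for $\iota_2$ is not required: a basis vector $f$ orthogonal to all of $\iota_2(B_\Lambda)$ contributes nothing and may simply be left unflipped. Finally, the verification that the sign-flipped $B_2'$ is an orthonormal basis and that \eqref{e: 0/1 2} holds is immediate from the choice of $\epsilon$.
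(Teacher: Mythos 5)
Your proposal is correct and follows essentially the same route as the paper: both compare $\L \iota_2(x),\iota_2(x')\R$, a sum of $\pm 1$ terms over $\supp(\iota_2(x)) \cap \supp(\iota_2(x'))$, with the count $|\supp(\iota_2(x)) \cap \supp(\iota_2(x'))|$ supplied by Lemma \ref{l: D2}, forcing all terms to be $+1$ and hence sign-consistency at each $f \in B_2$, after which one flips signs to obtain $B_2'$. No gaps.
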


\begin{proof}

Choose a pair of elements $x, y \in B_\Lambda$.  The pairing $\L \iota_2(x), \iota_2(y) \R$ is a sum of terms
\begin{equation}\label{e: iota terms}
\L \iota_2(x),f \R \cdot \L \iota_2(y),f \R, \quad f \in \supp(\iota_2(x)) \cap \supp(\iota_2(y)),
\end{equation}
each of which is $\pm 1$.  On the other hand, we have
\[ \L \iota_2(x), \iota_2(y) \R = \L x, y \R =  |\supp(\iota_2(x)) \cap \supp(\iota_2(y))| \]
by Lemma \ref{l: D2}.  It follows that each term \eqref{e: iota terms} is $+1$, so
\begin{equation}\label{e: iota pairing}
\L \iota_2(x),f \R = \L \iota_2(y),f \R, \quad \forall \, f \in \supp(\iota_2(x)) \cap \supp(\iota_2(y)).
\end{equation}
For given a fixed $f \in B_2$, it follows from \eqref{e: iota pairing} that either $\L x, f \R \geq 0$,  $\forall x \in B_\Lambda$, or else $\L x, f \R \leq 0$, $\forall x \in B_\Lambda$.  In the first case (which includes the possibility that $\L f, x \R = 0$, $\forall x \in B_\Lambda$), we declare $f \in B'_2$; otherwise, $-f \in B'_2$.  The resulting orthonormal basis $B'_2$ clearly fulfills \eqref{e: 0/1 2}.

\end{proof}

\begin{proof}[Proof of Proposition \ref{p: rigid}]

Using the basis $B'_2$ of Lemma \ref{l: 0/1}, we obtain
\[ S_2(Y) = \{ \chi \in \Short(Z_2) \; | \; \bigcup_{y \in Y} \supp(\iota_2(y)) \subset \supp^+(\chi) \}. \]
Just as \eqref{e: D1} follows from \eqref{e: 0/1}, it follows from \eqref{e: 0/1 2} that
\begin{equation}\label{e: D}
D(x,Y) = | \supp(\iota_i(x)) - \bigcup_{y \in Y} \supp(\iota_i(y)) |, \quad i=1,2.
\end{equation}
Now apply inclusion-exclusion to \eqref{e: D} to obtain, for all partitions $B_\Lambda = X \cup Y$ and $z \in X$,
\begin{eqnarray*}\label{e: inc exc}
| \bigcap_{x \in X}  \supp(\iota_1(x)) - \bigcup_{y \in Y} \supp(\iota_1(y)) | 
&=& \sum_{X' \subset X - z} (-1)^{|X'|} |\supp(\iota_1(z)) - \bigcup_{y \in X' \cup Y} \supp(\iota_1(y))| \\
&=& \sum_{X' \subset X - z} (-1)^{|X'|} D(z,X' \cup Y) \\
&=& \sum_{X' \subset X - z} (-1)^{|X'|} |\supp(\iota_2(z)) - \bigcup_{y \in X' \cup Y} \supp(\iota_2(y))| \\
&=& | \bigcap_{x \in X} \supp(\iota_2(x)) - \bigcup_{y \in Y} \supp(\iota_2(y)) |.
\end{eqnarray*}
Thus, we can set each pair of {\em atoms} into one-to-one correspondence:
\[ \iota_{X,Y}: \bigcap_{x \in X}  \supp(\iota_1(x)) - \bigcup_{y \in Y} \supp(\iota_1(y)) \overset{\sim}{\to}
\bigcap_{x \in X}  \supp(x) - \bigcup_{y \in Y} \supp(y), \]
for all partitions $B_\Lambda = X \cup Y$.  By \eqref{e: full support}, these atoms partition the sets $B_1, B_2$, and piecing together all the various $\iota_{X,Y}$ yields a bijection
\[ \iota: B_1 \overset{\sim}{\to} \bigcup_{x \in B_\Lambda} \supp(\iota_2(x)) \subset B'_2\]
with the property that
\[ \{ \iota(e) \; | \; e \in \supp(\iota_1(x)) \} = \supp(\iota_2(x)), \quad \forall \, x \in B_\Lambda.\]
Extend $\iota$ by linearity to a map $\iota: Z_1 \to Z_2$.  It is clear that $\iota_2 = \iota \circ \iota_1$ since this relation holds for the basis $B_\Lambda$, and this establishes the Proposition.

\end{proof}


\section{Graphs.}


\subsection{The cut lattice and the flow lattice.}\label{ss: cuts and flows}

(cf. \cite{bacheretal:lattices}, \cite[Ch.14]{godsilroyle:book})  Let $G = (V,E)$ denote a finite, loopless, undirected graph with vertex set $V = \{v_1,\dots,v_m\}$ and edge set $E = \{e_1,\dots,e_n\}$, possibly with parallel edges.  Fix an arbitrary orientation $\O_0$ of $G$.  Doing so endows $G$ with the structure of a one-dimensional CW-complex.  Thus, we obtain a short cellular chain complex 
\[ 0 \to C_1(G;\Q) \overset{\del}{\to} C_0(G;\Q) \to 0, \]
where $\del(e) = v-w$ for an edge $e$ oriented from one endpoint $w$ to another $v$.  We equip $C_1(G;\Q)$ and $C_0(G;\Q)$ with inner products by declaring that $E$ and $V$ form orthonormal bases for the respective chain groups.  Doing so enables us to express the adjoint mapping
\[ \del^* : C_0(G;\Q) \to C_1(G;\Q)\]
by the formula
\[\del^*(v) = \sum_{e \in E} \L \del(e),v \R \cdot e.\]
The splitting
\[ \im(\del^*) \oplus \ker(\del) = C_1(G;\Q)\]
gives rise to a pair of sublattices 
\[ \cC(G) := \im(\del^*) \cap C_1(G;\Z) \quad \text{and} \quad \cF(G) := \ker(\del) \cap C_1(G;\Z)\]  
inside $C_1(G;\Z) \cong \Z^n$.  These are the {\em cut lattice} and {\em flow lattice} of $G$, respectively.  Observe that altering the choice of orientation $\O_0$ preserves the isomorphism types of $\cC(G)$ and $\cF(G)$.


\subsection{Bases.}\label{ss: bases}

We recall the standard construction of a pair of bases for $\cC(G)$ and $\cF(G)$ out of a maximal spanning forest $F$ and orientation $\O$ of $G$.  Select an edge $e_i \in E(G)$.  If $e_i \in E(F)$, then the graph $F \setminus e_i$ contains a pair of connected components $K_1$ and $K_2$ with the property that $e_i$ directs from an endpoint in $K_1$ to an endpoint in $K_2$ in $\O$.  The set of edges between $K_1$ and $K_2$ forms the {\em fundamental cut} $\cut(F,e_i)$.  We define the {\em cut orientation} on $\cut(F,e_i)$ by directing each edge out of its endpoint in $K_1$.  Define
\[ x_i = \sum_{v \in V(K_1)} \del^*(v) = \sum_{e_j \in \cut(F,e_i)} \epsilon_j \cdot e_j \in \cC(G),\]
where $\epsilon_j = \pm 1$ according to whether the orientations on $e_j$ in $\cut(F,e_i)$ and $\O$ agree or differ.  If instead $e_i \notin E(F)$, then there exists a unique {\em fundamental cycle} $\cyc(F,e_i)$ in $F \cup e_i$.  We define the {\em cycle orientation} on $\cyc(F,e_i)$ by orienting its edges cyclically, keeping the orientation on $e_i$ from $\O$.   Define
\[ x_i = \sum_{e_j \in \cyc(F,e_i)} \epsilon_j \cdot e_j \in \cF(G),\]
where $\epsilon_j = \pm 1$ according to whether the orientations on $e_j$ in $\cyc(F,e_i)$ and $\O$ agree or differ.  Define a pair of sets
\[ B_\cC = \{ x_i \; | \; e_i \in E(F) \} \quad \text{and} \quad B_\cF = \{ x_i \; | \; e_i \in E(G \setminus F) \}. \]

\begin{prop}\label{p: primitive}

The cut lattice $\cC(G)$ and flow lattice $\cF(G)$ are complementary, primitive sublattices of $C_1(G;\Z)$ with bases $B_\cC$ and $B_\cF$, respectively.

\end{prop}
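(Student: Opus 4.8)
The plan is to verify the three assertions—spanning, linear independence, and primitivity—for each of $\cC(G)$ and $\cF(G)$, treating the two lattices in parallel since the arguments are formally dual. First I would recall that the splitting $\im(\del^*) \oplus \ker(\del) = C_1(G;\Q)$ from \S\ref{ss: cuts and flows} already establishes that $\cC(G)$ and $\cF(G)$ are orthogonal, and that their ranks sum to $n = |E|$ (indeed $\rk \im(\del^*) = \rk \del = m - c$ and $\rk \ker(\del) = n - m + c$, where $c$ is the number of connected components of $G$, which is also the number of components of any maximal spanning forest $F$; note $|E(F)| = m-c$ and $|E(G\setminus F)| = n - m + c$, matching $|B_\cC|$ and $|B_\cF|$). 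So complementarity is immediate once we know the $B_\cC$, $B_\cF$ are genuine bases.

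Next I would show that $B_\cC \cup B_\cF$ is a basis for $C_1(G;\Z)$, which gives linear independence of each piece at once. The key observation is the triangularity of this set with respect to the orthonormal edge basis $E$: for $e_i \in E(F)$, the cut vector $x_i = \sum_{e_j \in \cut(F,e_i)} \epsilon_j e_j$ has $\L x_i, e_i\R = 1$ (the tree edge $e_i$ always lies in its own fundamental cut, with the cut orientation agreeing with $\O$ by construction), and $\L x_i, e_j \R = 0$ for every \emph{other} tree edge $e_j \in E(F)$, $j \neq i$ (removing $e_i$ from $F$ does not disconnect the endpoints of any other tree edge, so $e_j \notin \cut(F,e_i)$). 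Dually, for $e_i \in E(G \setminus F)$ the flow vector $x_i = \sum_{e_j \in \cyc(F,e_i)} \epsilon_j e_j$ satisfies $\L x_i, e_i\R = 1$ and $\L x_i, e_j\R = 0$ for every other non-tree edge $e_j$, since $\cyc(F,e_i) \subset E(F) \cup \{e_i\}$ contains no non-tree edge besides $e_i$. Ordering $E$ with the tree edges first, the matrix expressing $B_\cC \cup B_\cF$ in terms of $E$ is block lower-triangular with an identity block in the tree-edge rows and another identity block (coming from the flow vectors restricted to the non-tree rows) below; hence it has determinant $\pm 1$, so $B_\cC \cup B_\cF$ is a $\Z$-basis for $C_1(G;\Z) \cong \Z^n$.

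From this, primitivity of each sublattice follows formally. A sublattice spanned by part of a $\Z$-basis of an ambient lattice is a direct summand, hence primitive (equivalently, $\cC(G) = \im(\del^*) \cap C_1(G;\Z)$ is saturated: if $k \cdot c \in \cC(G)$ for some $c \in C_1(G;\Z)$ and integer $k \neq 0$, then $c \in \im(\del^*) \otimes \Q = \im(\del^*)$, so $c \in \cC(G)$; likewise for $\cF(G)$ using $\ker\del$). Alternatively one invokes the criterion recorded in \S\ref{ss: gluing} that complementary sublattices with $\Lambda_1^\perp = \Lambda_2$, $\Lambda_2^\perp = \Lambda_1$ are automatically primitive, and $\cC(G) = \cF(G)^\perp$, $\cF(G) = \cC(G)^\perp$ inside $C_1(G;\Z)$ by the $\Q$-orthogonal splitting together with saturation. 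It remains only to confirm that the spans of $B_\cC$ and $B_\cF$ are \emph{all} of $\cC(G)$ and $\cF(G)$ respectively, not merely finite-index sublattices: but $\span_\Z(B_\cC) \oplus \span_\Z(B_\cF) = C_1(G;\Z) = \cC(G) \oplus \cF(G)$ (the latter by saturation and the rank count), and since $\span_\Z(B_\cC) \subseteq \cC(G)$, $\span_\Z(B_\cF) \subseteq \cF(G)$ are orthogonal, the inclusions must be equalities.

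The main obstacle—though a mild one—is bookkeeping the sign conventions $\epsilon_j$ and the cut/cycle orientations carefully enough to be certain that $\L x_i, e_i \R = +1$ (not $-1$) in both constructions, i.e. that the diagonal entries are genuinely $+1$; this is where one must unwind the definitions of "cut orientation" and "cycle orientation" in \S\ref{ss: bases}, which were set up precisely so that the edge $e_i$ contributes with coefficient $+1$ to $x_i$. Everything else is the standard linear-algebra-of-spanning-trees argument and should be dispatched in a few lines.
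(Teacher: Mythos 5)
There is a genuine error at the heart of your argument: $B_\cC \cup B_\cF$ is \emph{not} in general a $\Z$-basis of $C_1(G;\Z)$. The change-of-basis matrix you describe is not block triangular: with tree-edge rows listed first it has the shape $\left(\begin{smallmatrix} I & A \\ B & I \end{smallmatrix}\right)$, because a cut vector $x_i$ (for $e_i \in E(F)$) generally has nonzero entries in the non-tree rows and a flow vector has nonzero entries in the tree rows; its determinant is, up to sign, the number of maximal spanning forests of $G$, not $\pm 1$. Concretely, take two vertices joined by parallel edges $e_1,e_2$ with $F=\{e_1\}$: then (up to signs) $x_1 = e_1+e_2$ and $x_2 = e_1-e_2$, and $\span_\Z(B_\cC)\oplus\span_\Z(B_\cF)$ has index $2$ in $\Z^2$. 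In this example $\cC(G)\oplus\cF(G)\ne C_1(G;\Z)$ as well, so your assertion that ``$C_1(G;\Z)=\cC(G)\oplus\cF(G)$ by saturation and the rank count'' is also false: complementary saturated sublattices need not sum to the ambient lattice (in general the index here is the forest number, which is exactly why these lattices have nontrivial discriminant groups later in the paper).

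This error propagates to the two steps that need it: the claim that the inclusions $\span_\Z(B_\cC)\subseteq\cC(G)$, $\span_\Z(B_\cF)\subseteq\cF(G)$ are equalities, and your primary primitivity argument (``spanned by part of a $\Z$-basis''). Your parenthetical saturation remark can be made to rescue primitivity, but only if you prove that saturation implies surjectivity of the restriction map $r_i:\; C_1(G;\Z)\to\Lambda_i^*$, which is the paper's definition of primitive; the paper avoids this by observing directly that the edges $E(F)$ and $E(G\setminus F)$ restrict to the dual bases of $B_\cC$ and $B_\cF$, so the restriction maps surject. For the spanning statement, the correct repair uses exactly the $\delta_{ij}$ relations you already established: the rank count shows $B_\cC$ spans $\im(\del^*)$ over $\Q$, so any $x\in\cC(G)$ can be written as $\sum_{e_i\in E(F)} q_i x_i$ with $q_i\in\Q$, and then $q_i=\L x,e_i\R\in\Z$ because $x$ is an integral chain; hence $x\in\span_\Z(B_\cC)$, and similarly for $\cF(G)$. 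With those two replacements the rest of your outline (orthogonality, rank count, complementarity, and the $+1$ diagonal entries from the cut/cycle orientation conventions) is sound and matches the paper.
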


\begin{proof}

Let $\cC'(G) \subset \cC(G)$ and $\cF'(G) \subset \cF(G)$ denote the spans of $B_\cC$ and $B_\cF$, respectively.  Observe that if $e_i,e_j \in E(F)$, then $\L x_i, e_j \R = \delta_{ij}$, while if $e_i,e_j \in E(G \setminus F)$, then $\L x_i, e_j \R = \delta_{ij}$.  It follows at once that $B_\cC$ and $B_\cF$ are bases for $\cC'(G)$ and $\cF'(G)$, and that $E(F)$ and $E(G \setminus F)$ evaluate on $\cC'(G)$ and $\cF'(G)$ precisely as the dual bases $B_\cC^*$ and $B_\cF^*$, respectively. Thus, $\cC'(G)$ and $\cF'(G)$ are primitive sublattices of $C_1(G;\Z)$.

Now,
\[ m = |B_\cC| + |B_\cF| = \rk (\cC'(G)) + \rk(\cF'(G)) \leq \rk (\cC(G)) + \rk (\cF(G))\leq m,\]
where the last inequality follows since $\cC(G)$ and $\cF(G)$ are orthogonal.  It follows that $\cC(G)$ and $\cF(G)$ are complementary, and furthermore that $B_\cC$ and $B_\cF$ are bases for the vector spaces $\cC(G) \otimes \Q = \im(\del^*)$ and $\cF(G) \otimes \Q = \ker(\del)$, respectively.  Thus, any $x \in \cC(G)$ has an expression
\[ x = \sum_{e_i \in E(F)} q_i \cdot x_i, \quad q_i \in \Q.\]
However, since $q_i = \L x, e_i \R \in \Z$, we must in fact have $x \in \cC'(G)$.  Hence $\cC'(G) = \cC(G)$, and similarly $\cF'(G) = \cF(G)$.  The statement of the Lemma now follows.

\end{proof}

The following Lemma ensures a particularly nice choice of spanning forest and orientation (cf. \eqref{e: 0/1} in Proposition \ref{p: rigid}).

\begin{lem}\label{l: special basis}

There exists a maximal spanning forest $F$ and an orientation $\O_1$ such that
\[ \L x_i, e_j \R \in \{ 0, 1 \}, \quad \forall x_i \in B_\cC, \, e_j \in E(G), \]
and an orientation $\O_2$ such that
\[ \L x_i, e_j \R \in \{ 0, 1 \}, \quad \forall x_i \in B_\cF, \, e_j \in E(G). \]

\end{lem}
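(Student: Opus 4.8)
The plan is to choose the spanning forest and orientation so that each fundamental cut, in its cut orientation, and each fundamental cycle, in its cycle orientation, is ``coherently oriented'' relative to a single background orientation. The key observation is that $\langle x_i, e_j \rangle \in \{0, \pm 1\}$ always holds by construction (Proposition \ref{p: primitive}), so the content of the Lemma is solely about controlling the signs $\epsilon_j$.

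For the cut lattice: I would work component by component, so assume $G$ is connected. Fix a spanning tree $F$, root it at a vertex $v_1$, and orient every edge of $F$ \emph{away from the root} (so $F$ becomes an out-arborescence); call this orientation $\O_1$ (on non-forest edges the orientation is irrelevant for $B_\cC$, so orient them arbitrarily). For a forest edge $e_i$ directed from $w$ to $v$, the component $K_1$ of $F \setminus e_i$ is the one \emph{not} containing the root, i.e.\ the subtree ``below'' $e_i$, and $K_2$ contains the root. Then $x_i = \sum_{v' \in V(K_1)} \del^*(v')$, and for a cut edge $e_j$ running between $K_1$ and $K_2$, the coefficient $\langle x_i, e_j\rangle$ is $+1$ precisely when $e_j$ points from $K_2$ into $K_1$ in $\O_1$ and $-1$ when it points out of $K_1$ into $K_2$. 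Since every $e_j \in \cut(F, e_i)$ lies on a path from the root through $e_i$ (or connects $K_1$ to $K_2$ with the root side being $K_2$), the rooted/away-from-root orientation forces all these edges to point \emph{into} $K_1$: indeed, the endpoint of $e_j$ in $K_1$ is a descendant of the lower endpoint of $e_i$, while its $K_2$-endpoint is an ancestor or lies off the path, and for \emph{forest} edges the away-from-root rule gives the sign directly, while \emph{non-forest} edges in the cut can be oriented into $K_1$ by hand since only one orientation per non-forest edge is being chosen and each non-forest edge lies in at most... — here one must be slightly careful, so the cleaner route is: orient each non-forest edge so that it points into the ``smaller'' (farther-from-root) side of whichever fundamental cuts it participates in; a non-forest edge $e_j$ lies in $\cut(F,e_i)$ iff $e_i$ lies on the fundamental cycle of $e_j$, and the set of such $e_i$ forms a path in $F$, so there is a well-defined ``deepest'' endpoint of $e_j$, and orienting $e_j$ toward it makes all the signs $+1$ simultaneously. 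This gives $\langle x_i, e_j\rangle \in \{0,1\}$ for all $i,j$.

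For the flow lattice the argument is dual. Choose the same $F$ but now I want an orientation $\O_2$ making every fundamental cycle coherently oriented. A fundamental cycle $\cyc(F, e_i)$ (for $e_i \notin E(F)$) consists of $e_i$ together with the unique path $P_i$ in $F$ between its endpoints; the cycle orientation keeps $e_i$'s orientation and traverses $P_i$ accordingly. The difficulty is that a single forest edge $e_j$ may lie on many fundamental cycles, and the cycle orientations it inherits from different $e_i$ need not agree \emph{a priori}. But here one can use the rooted structure again: with $F$ rooted and oriented away from the root, $P_i$ runs from one endpoint of $e_i$ up to their common ancestor and back down; if one orients each non-forest edge $e_i$ from its deeper endpoint to its shallower endpoint (``upward''), then $\cyc(F,e_i)$ traverses the up-segment of $P_i$ along $\O_2$ and the down-segment against $\O_2$, giving mixed signs — so instead orient $e_i$ from shallower to deeper, making... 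In fact the honest fix is the classical one: pick a \emph{linear order} on $V$ extending the ancestor relation, orient every edge (forest and non-forest alike) from smaller to larger vertex; then for any cycle the coboundary/cycle space has a basis of $\{0,\pm1\}$ vectors, but to force all signs positive one instead invokes that $\cF(G)$, being the kernel lattice, has a basis coming from an \emph{Eulerian-type} orientation on each 2-edge-connected piece. Cleanest: apply the already-proven cut case to the \emph{planar dual locally}—but the graph need not be planar—so instead I would cite the standard fact that a graph admits an orientation in which every fundamental cycle of a fixed spanning tree is directed (equivalently, the tree edges can be oriented, then each non-tree edge oriented to agree with the tree-path it closes), which is exactly the statement that the matrix $[\langle x_i, e_j\rangle]$ can be taken $\{0,1\}$ by reorienting $G$; concretely, orient $F$ away from the root, and orient each non-forest edge $e_i$ so that on $\cyc(F,e_i)$ the cycle orientation agrees with $\O_2$ on $e_i$ — then the sign $\epsilon_j$ on a forest edge $e_j \in \cyc(F,e_i)$ is $+1$ iff $e_j$ and the cycle-traversal agree, and since on $P_i$ the traversal goes strictly up then strictly down, while $\O_2$ on $F$ goes strictly down, exactly one half has sign $-1$ — which shows this naive choice \emph{fails} and confirms that the forest edges' orientations, not just the non-forest ones, must be arranged per cycle.

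\textbf{The main obstacle.} The cut case is essentially immediate from rooting the tree; the real work is the flow case, where the sign-coherence of \emph{overlapping} fundamental cycles is the crux. The resolution I expect to use: since $\langle x_i, e_j \rangle = \pm 1$ exactly when $e_j \in \cyc(F,e_i)$, the desired reorientation of $G$ is precisely a choice of orientation under which every fundamental cycle is a directed cycle; such an orientation exists iff one can 2-color... no — it exists unconditionally, by orienting $F$ arbitrarily and then, \emph{for each non-forest edge separately}, there is no consistency needed because $x_i$'s entries on \emph{non-forest} edges are $\delta$'s (shown in Proposition \ref{p: primitive}), so the only signs to fix are those on forest edges, and a forest edge $e_j$ appears in $\cyc(F,e_i)$ with a sign determined by the orientation of $e_j$ versus the induced cycle orientation, which is induced by $e_i$'s orientation; thus by first fixing $\O$ on $F$ (say, away from a root) and then choosing each non-forest edge $e_i$'s orientation to make $\epsilon_j = +1$ for the forest edges on the ``going up'' arc and accepting $-1$ on the ``coming down'' arc is wrong — so the correct statement must be that one reorients \emph{forest} edges too, which is impossible to do consistently across all cycles unless one passes to the quotient: the fundamental cycles through a fixed forest edge $e_j$ are indexed by non-forest edges whose tree-path crosses $e_j$, and $e_j$ separates $F$ into two parts, so each such $e_i$ goes from one part to the other in a consistent direction once $e_j$ is deleted — hence orient $e_j$ across that cut, and orient each $e_i$ to agree: this is consistent, and is exactly the dual of the cut construction applied with the roles of $B_\cF$ and the edge set interchanged. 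So the flow case reduces to the cut case by the same ``deepest-endpoint'' trick applied to forest rather than non-forest edges, and the obstacle dissolves; I would present the cut case in full and remark that the flow case follows \emph{mutatis mutandis} (or by the duality $\cC \leftrightarrow \cF$ under interchanging $E(F)$ with $E(G\setminus F)$ in a cographic presentation).
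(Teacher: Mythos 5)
Your argument has a genuine gap: you fix an \emph{arbitrary} spanning tree and try to achieve the sign condition by choosing orientations alone, but for a fixed arbitrary forest the statement is simply false, so the existential quantifier over $F$ must be used in an essential way. Concretely, let $G=K_4$ with vertices $1,2,3,4$ and let $F$ be the star at vertex $1$. The fundamental cycles are the triangles $123$, $124$, $134$; coherently orienting the first two forces $3\to 1$ and $4\to 1$, so the third can never be a directed cycle, whatever orientation is chosen. Dually, the fundamental cuts are the vertex stars at $2,3,4$, and no orientation of $K_4$ makes each of $2,3,4$ a source or a sink (consider the triangle $234$). So no orientation works for this tree, on either the cut or the flow side; in particular the cut case is \emph{not} ``essentially immediate from rooting the tree.'' The failure is exactly the cross-edge phenomenon you keep brushing against: your ``deepest endpoint'' rule for a non-tree edge $e_j$ with endpoints $a,b$ is consistent only when one of $a,b$ is an ancestor of the other; if the tree path from $a$ to $b$ climbs to a common ancestor and descends again, the cuts of the tree edges on the $a$-branch demand $e_j$ point toward $a$ while those on the $b$-branch demand the opposite, and the same overdetermination sinks your final fix for the flow case (``orient each $e_i$ to agree'' imposes one constraint per tree edge met by $e_i$, and these conflict in the example above).

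What is missing is precisely the construction of a good forest, which is the actual content of the paper's proof. The paper iteratively chooses root sets $R_1,R_2,\dots$ (one vertex per component, each new root having a unique neighbor in the previous root set) and lets $F$ consist of the resulting parent edges; one then checks that \emph{every} edge of $G$ joins an ancestor to a descendant of $F$ (levels strictly increase along every edge away from the roots), i.e.\ there are no cross edges --- a DFS forest would serve equally well. Once that comparability property holds, your orientation scheme does work and coincides with the paper's: orient all edges from ancestor to descendant (the paper's $\O_1$, by increasing level) to make every fundamental cut coherent, and reverse the non-forest edges (the paper's $\O_2$) to make every fundamental cycle coherent. Without singling out such a forest, the proof cannot be completed.
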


\begin{proof}

A {\em root set} $R$ in a graph is a subset of its vertices, one in each connected component.  Let $R_1$ be a root set of $G_1 = G$.  Having defined $R_i$ and $G_i$, let $G_{i+1} = G_i - R_i$, choose a root set $R_{i+1}$ in $G_{i+1}$ with the property that each vertex $v_{i+1} \in R_{i+1}$ has a (unique) neighbor $v_i \in R_i$, and distinguish a single edge $e = (v_i,v_{i+1})$.  Let $F$ be the subgraph of $G$ consisting of all such edges.

By induction on $i$, no vertex in $R_i$ is contained in a cycle in $F$, hence $F$ is a forest.  By reverse induction on $i$, $R_i$ is a root set for the subgraph of $F$ induced on $V(G_i)$, hence ($i=1$) $F$ is maximal.  Given an edge $e \in E(G)$, write $e = (v_i,v_j)$, where $v_i \in R_i$, $v_j \in R_j$, and $i < j$ ($i \ne j$ since each $R_i$ is an independent set).  We obtain an orientation $\O_1$ of $G$ by orienting each edge $e$ from $v_i$ to $v_j$, and another orientation $\O_2$ by reversing the orientation on each edge in $E(G \setminus F)$.

Observe that for all $e \in E(F)$, every edge in $\cut(F,e)$ directs the same way in the cut orientation and $\O_1$.  Similarly, for all $e \in E(G \setminus F)$, every edge in $\cyc(F,e)$ directs the same way in the cycle orientation and $\O_2$.  The statement of the Lemma now follows for this choice of $F$, $\O_1$, and $\O_2$.

\end{proof}


\subsection{Short characteristic covectors.}

Fixing an orientation $\O_0$, there exists a 1-1 correspondence
\[ \Short(C_1(G;\Z)) \leftrightarrow  \{ \text{orientations } \O \text{ of } G\}, \]
where $\chi_\O \leftrightarrow \O$ is determined by specifying that $\L \chi_\O, e_i \R = 1$ if $e_i$ gets the same orientation in both $\O_0$ and $\O$ and $-1$ otherwise.  The value $\L \chi_\O, \del^*(v) \R$ is thus {\em minus} the signed degree of $v$ in $\O$: it equals the number of edges in $D$ directed into $v$ minus the number directed out of it, i.e.
\[ \L \chi_\O, \del^*(v) \R = -\deg_\O(v) = \deg_\O^{\, \text{in}}(v) - \deg_\O^{\, \text{out}}(v).\]
We denote the restriction of $\chi_\O$ to $\Short(\cC(G))$ by the same symbol and call it an {\em orientation covector} for $\cC(G)$.

\begin{prop}\label{p: short graph}

The set $\Short(\cC(G))$ consists of precisely the orientation covectors for $\cC(G)$.
\end{prop}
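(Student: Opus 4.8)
The plan is to show two containments. First, every orientation covector is short; second, every short characteristic covector for $\cC(G)$ is an orientation covector. For the first direction, I would fix a spanning forest $F$ and the orientations $\O_1$, $\O_2$ from Lemma~\ref{l: special basis}. Given an arbitrary orientation $\O$ with covector $\chi_\O$, I want to compare $|\chi_\O|$ with the norm of any other representative in $[\chi_\O]$. The key structural fact, which parallels the argument in Proposition~\ref{p: rigid}, is that for the basis $B_\cC$ of $\cC(G)$ arising from $F$ and $\O_1$, we have $\L x_i, e_j \R \in \{0,1\}$. This forces $\L \chi_\O, x_i \R$ to lie in a predictable range, and $\chi_\O$ can be moved within its coset by adding $2x_i$; the norm change is $4(\L\chi_\O, x_i\R + |x_i|)$, which has a definite sign depending on whether $\L \chi_\O, x_i\R$ is near the top or bottom of its range. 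The point is that the orientation covectors are exactly the $\chi$ for which $|\L \chi, e_j\R| = 1$ on the edges of $F$ — equivalently, on a basis of the dual — while other characteristic covectors in the same coset take larger absolute values on those edges. Summing the contributions $\L\chi, e_j\R^2$ over $e_j \in E(F)$ and comparing shows $|\chi_\O|$ is minimal.

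For the converse, I would take $\chi \in \Short(\cC(G))$ and reconstruct an orientation from it. Since $\cC(G) \hookrightarrow C_1(G;\Z) = \Z^n$ is primitive, and (this is the crucial input, to be established alongside or just after this Proposition, cf.\ the remark in the introduction that ``every short characteristic covector for $\cC(G)$ and $\cF(G)$ is the restriction of one for $C_1(G;\Z)$'') every short characteristic covector for $\cC(G)$ lifts to a short characteristic covector $\chi_\O$ of $\Z^n$, i.e.\ to an orientation $\O$. The restriction of $\chi_\O$ to $\cC(G)$ is by definition the orientation covector attached to $\O$, so $\chi$ is an orientation covector. If the lifting statement is not yet available at this point in the text, then the argument reverses: one shows directly that a short characteristic covector $\chi$ satisfies $|\L\chi, e_j\R| = 1$ for $e_j \in E(F)$ (using the same coset-shift-by-$2x_i$ inequality as above, now read as a minimality constraint), and then that $\chi$ is already determined on all of $\cC(G)$ by those values since $E(F)$ evaluates as the dual basis $B_\cC^*$ (Proposition~\ref{p: primitive}); choosing the orientation $\O$ on $E(F)$ to match the signs of $\L\chi, e_j\R$ and extending arbitrarily to $E(G\setminus F)$, the resulting $\chi_\O$ restricts to $\chi$.

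The main obstacle is the comparison of norms within a coset, i.e.\ proving that the orientation covectors genuinely minimize. The subtlety is that shifting $\chi$ by $2x_i$ for a single fundamental cut $x_i$ only controls one coordinate of the dual basis, and the norm $|\chi|$ is not simply a sum over $E(F)$; one must handle the contributions on $E(G\setminus F)$ as well, and these can be large. The clean way around this is to work not with an arbitrary coset representative but to use the $\{0,1\}$-basis of Lemma~\ref{l: special basis}: for $x_i \in B_\cC$ with $\supp(x_i) \subset E$, the quantity $\L\chi, x_i\R$ ranges over an interval of integers of the same parity, of length $2|\supp(x_i) \cap \supp^-(\chi)|$ roughly, and $\chi$ is short precisely when each $\L\chi, x_i\R$ sits at an endpoint — which translates exactly into $\chi$ being $\pm 1$ on every edge, i.e.\ an orientation covector. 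I would phrase this via the $M(x,Y)$, $m(x,Y)$, $D(x,Y)$ bookkeeping already set up for Proposition~\ref{p: rigid}, so that the present Proposition becomes a corollary of that machinery applied to the primitive embedding $\cC(G) \hookrightarrow C_1(G;\Z)$.
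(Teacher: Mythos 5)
Your proposal does not close the hard direction (that every short characteristic covector of $\cC(G)$ is an orientation covector), and the route you suggest for it is circular. The bookkeeping of Proposition~\ref{p: rigid} --- the identities \eqref{e: S and r} and \eqref{e: D1}, and the identification of the maximizers and minimizers of $\L\chi,x\R$ over $S(Y)$ as restrictions of explicit short covectors of $Z_1$ --- is valid only under the hypothesis that $r_1\colon\Short(Z_1)\to\Short(\Lambda)$ surjects. For $(\Lambda,Z_1)=(\cC(G),C_1(G;\Z))$ that surjectivity is precisely what Proposition~\ref{p: short graph} is needed to supply (it is the content of Corollary~\ref{c: cut and cycle}), so you may invoke neither that machinery nor the introduction's remark that every short covector of $\cC(G)$ lifts to one of $C_1(G;\Z)$: the lifting statement \emph{is} the proposition.

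The fallback argument also has a genuine gap at its decisive step. The claim that $\chi$ is short ``precisely when each $\L\chi,x_i\R$ sits at an endpoint of its range,'' and that this ``translates exactly into $\chi$ being $\pm1$ on every edge,'' is both unproven and not well posed: a characteristic covector of the sublattice $\cC(G)$ carries no preassigned values on edges, and the entire content of the statement is that a short one is realized by a $\pm1$ vector on $E(G)$, i.e.\ by an orientation. Shifting $\chi$ by twice elements of $\cC(G)$ yields only the necessary inequalities $|\L\chi,\sum_{v\in T}\del^*(v)\R|\leq|\sum_{v\in T}\del^*(v)|$; converting them into an actual orientation inducing $\chi$ is an in-degree--prescription problem, which the paper solves via Hakimi's theorem: one sets $l(v)=\tfrac12(\deg(v)-\L\chi,\del^*(v)\R)$, verifies the Hall conditions \eqref{e: Hall 1}--\eqref{e: Hall 2} from those inequalities, and extracts the orientation from a perfect matching in an auxiliary bipartite graph. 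Your proposal contains no substitute for this matching step. Moreover, your reconstruction ``choose $\O$ on $E(F)$ to match the signs and extend arbitrarily to $E(G\setminus F)$'' fails on its own terms: the basis vectors $x_i\in B_\cC$ are supported on fundamental cuts, which in general contain non-forest edges, so the restriction of $\chi_\O$ to $\cC(G)$ depends on the choice of $\O$ on $E(G\setminus F)$, and an arbitrary extension will not restrict to $\chi$. Finally, the easy inclusion (orientation covectors are short) needs none of your coset analysis: orientation covectors are exactly $\Short(C_1(G;\Z))$, so by Proposition~\ref{p: primitive} and \eqref{e: short map} their restrictions to $\cC(G)$ are automatically short.
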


\begin{proof}

This follows in essence from a result of Hakimi \cite[Thm.4]{hakimi}; we follow the elegant treatment of Schrijver \cite[Thm.61.1\&Cor.61.1a]{schrijver:book}.  Thus, suppose that $\chi \in \Short(\cC(G))$.  From $|\chi \pm 2 \sum_{v \in T} \del^*(v) | \geq |\chi|$ we obtain
\begin{equation}\label{e: ineq 2}
|\L \chi,  \sum_{v \in T} \del^*(v) \R| \leq | \sum_{v \in T} \del^*(v)|, \quad  \forall \, T \subset V.
\end{equation}
Define a function $l : V \to \Z_{\geq 0}$ by
\[ l(v) = {1 \over 2}(\deg(v) - \L \chi, \del^*(v) \R), \]
and extend $l$ to subsets of $V$ by declaring $l(T) = \sum_{v \in T} l(v)$.  Observe that $l$ satisfies two key properties:
\begin{equation}\label{e: Hall 1}
l(V) = |E|, \text{ and}
\end{equation}
\begin{equation}\label{e: Hall 2}
l(T) \leq e(T), \quad \forall \, T \subset V,
\end{equation}
where $e(T)$ denotes the number of edges with at least one endpoint in $T$.

We seek an orientation $\O$ of $G$ with the property that $\deg^\text{\,in}_\O(v) = l(v)$ for all $v \in V$; then $\L \chi, \del^*(v) \R = -\deg_\O(v)$, so $\chi = \chi_\O$ is an orientation covector.  To produce $\O$, construct a bipartite graph $B$ with two partite classes: $V'$, which contains $l(v)$ copies of $v$ for each $v \in V$; and $E$, the edge set of $G$.  The edge set of $B$ consists of pairs $(v,e)$, where $v \in V'$ denotes a copy of an endpoint of $e \in E$.  Properties \eqref{e: Hall 1} and \eqref{e: Hall 2} ensure that for every subset $T' \subset V'$, there exist at least $|T'|$ elements of $E$ with a neighbor in $V'$.  Thus, Hall's matching theorem implies that $B$ contains a perfect matching $\mathcal{M}$ \cite[Thm.16.7]{schrijver:book}.  Directing each $e \in E$ to the endpoint to which it gets matched in $\mathcal{M}$ produces the desired orientation $\O$.

\end{proof}

\begin{cor}\label{c: cut and cycle}

The restriction maps
\[ \Short(C_1(G;\Z)) \to \Short(\cC(G)), \, \Short(\cF(G))\]
surject, and the inclusion $\cC(G) \oplus \cF(G) \subset C_1(G;\Z)$ induces a natural isomorphism
\[ \varphi: (C(\cC(G)),d_\cC) \overset{\sim}{\to} (C(\cF(G)),-d_\cF). \]

\end{cor}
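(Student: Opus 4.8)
The plan is to deduce Corollary~\ref{c: cut and cycle} directly from Proposition~\ref{p: short graph}, Proposition~\ref{p: primitive}, and Lemma~\ref{l: pre-gluing 2} (or its refinement Proposition~\ref{p: gluing}). The two assertions of the Corollary are really the same fact packaged two ways, so I would establish the surjectivity statement first and then read off the isomorphism.

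First I would handle the surjection $\Short(C_1(G;\Z)) \to \Short(\cC(G))$. Every element of $\Short(\cC(G))$ is an orientation covector by Proposition~\ref{p: short graph}, i.e. it has the form $\chi_\O|_{\cC(G)}$ for some orientation $\O$ of $G$; but $\chi_\O \in \Short(C_1(G;\Z))$ by the very definition of the correspondence between short covectors of $C_1(G;\Z)$ and orientations, and $r_\cC(\chi_\O) = \chi_\O|_{\cC(G)}$, so the restriction is onto. The surjection onto $\Short(\cF(G))$ requires a symmetric argument: one checks the analogue of Proposition~\ref{p: short graph} for the flow lattice, or, more cheaply, one invokes the general remark following \eqref{e: short map} that for complementary primitive sublattices $\Lambda_1, \Lambda_2$ of a unimodular lattice, $\Short(\Lambda) \to \Short(\Lambda_1)$ surjects iff $\Short(\Lambda) \to \Short(\Lambda_2)$ does. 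Since $\cC(G)$ and $\cF(G)$ are complementary primitive sublattices of $C_1(G;\Z) \cong \Z^n$ by Proposition~\ref{p: primitive}, and $\Z^n$ is unimodular, surjectivity onto $\Short(\cC(G))$ forces surjectivity onto $\Short(\cF(G))$.

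Second, for the isomorphism of $d$-invariants I would apply Proposition~\ref{p: gluing}: we have complementary primitive sublattices $\cC(G), \cF(G) \subset \Z^n$ (Proposition~\ref{p: primitive}) with $\Short(\Z^n) \to \Short(\cC(G))$ surjective (just shown), so the forward direction of Proposition~\ref{p: gluing} yields a natural isomorphism $\varphi : (C(\cC(G)), d_\cC) \overset{\sim}{\to} (C(\cF(G)), -d_\cF)$. That this $\varphi$ is the map induced by the inclusion $\cC(G) \oplus \cF(G) \subset C_1(G;\Z)$ is exactly how $\varphi$ was constructed in Lemma~\ref{l: pre-gluing 2} (it sends $[\chi_1] \mapsto [\chi_2]$ precisely when $\chi_1 + \chi_2$ extends to a characteristic covector of the ambient lattice), so nothing further is needed.

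I expect the only real content to be the surjectivity onto $\Short(\cF(G))$: Proposition~\ref{p: short graph} is stated only for the cut lattice, so one must either run the dual argument or appeal to the iff-remark after \eqref{e: short map}. Using the latter keeps the proof to a couple of lines. Everything else—that $\chi_\O$ is short in $C_1(G;\Z)$, that restriction of a characteristic covector is characteristic, and that the glue map of Proposition~\ref{p: gluing} agrees with the inclusion-induced map—is bookkeeping already done in the earlier Lemmas.
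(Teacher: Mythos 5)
Your proposal is correct and follows essentially the same route as the paper, whose proof is exactly the combination of Propositions \ref{p: gluing}, \ref{p: primitive}, and \ref{p: short graph}; you have merely spelled out the details (orientation covectors are restrictions of short covectors of $C_1(G;\Z)$, the ``iff'' remark after \eqref{e: short map} transfers surjectivity from $\cC(G)$ to $\cF(G)$, and the glue map of Lemma \ref{l: pre-gluing 2} is the inclusion-induced one). No changes needed.
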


\begin{proof}

This follows immediately on combination of Propositions \ref{p: gluing}, \ref{p: primitive}, and \ref{p: short graph}. 

\end{proof}


\subsection{Whitney's theorem.}

Now suppose that $G$ is connected (by convention, the empty graph is connected).  A {\em cut-edge} $e \in E(G)$ is one such that $G - e$ is disconnected, and a {\em cut-vertex} $v \in V(G)$ is one such that $G - v$ is disconnected.  The graph $G$ is {\em 2-edge-connected} if it is does not contain a cut-edge and {\em 2-connected} if it does not contain a cut-vertex.  It is straightforward to show that $G$ is 2-edge-connected iff every edge is contained in some cycle, and 2-connected iff every {\em pair} of distinct edges is contained in some cycle.  Thus, a 2-connected graph is 2-edge-connected, and the graph with one vertex and no edge is 2-edge-connected.  A {\em 2-isomorphism} between a pair of graphs is a cycle-preserving bijection between their edge sets.

A special instance of 2-isomorphism arises as follows.  Let $G_1, G_2$ denote a pair of disjoint graphs, and distinguish a pair of distinct vertices $v_i,w_i \in V(G_i)$, $i=1,2$.  Form a graph $G$ by identifying the vertices $v_1, v_2$ into a vertex $v$ and vertices $w_1,w_2$ into a vertex $w$; and similarly, form a graph $G'$ by identifying the vertices $v_1, w_2$ into a vertex $v'$ and vertices $w_1,v_2$ into a vertex $w'$.  We say that $G$ and $G'$ are related by a {\em switch}.  The switch is {\em special} if one of $v_i, w_i$ is an isolated vertex in $G_i$ for some $i$.  In this case, one of $v,w$ is a cut-vertex in $G$ and one of $v',w'$ is a cut-vertex in $G'$.  It is clear that identifying $E(G_i) \subset E(G)$ with $E(G_i) \subset E(G')$, $i=1,2$, defines a 2-isomorphism between $G$ and $G'$.

Conversely, we have the following important fact.

\begin{thm}[Whitney \cite{whitney:switch}]\label{t: switch}

A pair of 2-connected graphs are 2-isomorphic iff they are related by a sequence of switches. \qed

\end{thm}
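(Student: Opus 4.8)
The plan is to prove both directions by reasoning about cycles and cocycles in the two graphs, using the correspondence between edges, orientations, and short characteristic covectors. The non-trivial direction is that a sequence of switches produces 2-isomorphic graphs, which was already sketched just before the statement; so the real content is the forward direction: a 2-isomorphism between 2-connected graphs is realized by switches.

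First I would set up the combinatorial framework. Fix 2-connected graphs $G, G'$ and a 2-isomorphism $\psi: E(G) \to E(G')$ (a bijection carrying cycles to cycles). Since a bond (minimal cut) of a graph is exactly a minimal edge set meeting every cycle an even number of times that is itself minimal with this property — equivalently, bonds are the cocircuits of the cycle matroid — a cycle-preserving bijection is automatically a bond-preserving bijection. Thus $\psi$ induces an isomorphism of the cycle matroids $M(G) \cong M(G')$. The classical form of Whitney's 2-isomorphism theorem is precisely the statement that graphs with isomorphic cycle matroids (i.e. 2-isomorphic graphs, in the sense used here) are related by vertex identifications, splittings, and the switch (twist) operation; and when both graphs are 2-connected, the identification/splitting moves are not needed, leaving only switches.

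The key steps, in order: (1) Observe that for a 2-connected graph, the cocircuits of $M(G)$ are exactly the edge sets of the form $\del^*(S)$ for $\varnothing \neq S \subsetneq V(G)$ with both $S$ and its complement inducing connected subgraphs — these are the \emph{bonds}. Since $\psi$ preserves cycles, it preserves the property of being a minimal edge set hitting every cycle evenly, hence preserves bonds. (2) Proceed by induction on $|E(G)|$. If $G$ has a nontrivial $2$-vertex cut, i.e. there exist vertices $a,b$ and a decomposition $E(G) = E_1 \sqcup E_2$ with $E_i$ the edges of a connected subgraph $H_i$ meeting at exactly $\{a,b\}$, then the corresponding partition $\psi(E_1) \sqcup \psi(E_2)$ of $E(G')$ must — because bonds crossing the cut correspond — realize $G'$ as $H_1', H_2'$ glued at two vertices in one of the two possible ways. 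Applying a switch to one of them reduces the induction to each of the (strictly smaller, or 3-connected) pieces. (3) For the base case where $G$ is 3-connected, a 3-connected graph is determined up to isomorphism by its cycle matroid (Whitney), so $\psi$ comes from a graph isomorphism and no switch is needed.

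The main obstacle I anticipate is step (2): making the "separate along a 2-cut, recurse, reassemble" argument genuinely rigorous, in particular verifying that the two pieces of $G'$ obtained by splitting along the image 2-cut are themselves connected and meet in exactly two vertices (so that a switch is well-defined), and checking that the switch operation composes correctly with the inductively-produced sequences for the pieces. This is the heart of the proof of Whitney's theorem and is somewhat delicate; I would handle it by phrasing everything in terms of the \emph{connectivity structure} (the tree of 3-connected components / the SPQR-type decomposition) of the cycle matroid, which is an isomorphism invariant, and observing that a switch corresponds exactly to reversing the gluing at one internal 2-separation of that tree. Since the paper invokes this as a black box (Theorem \ref{t: switch}, due to Whitney), in practice I would simply cite \cite{whitney:switch} and not reproduce the argument, but the above is how the proof runs.
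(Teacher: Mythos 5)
Your conclusion matches the paper exactly: the paper states Theorem \ref{t: switch} as Whitney's classical result with no proof (the \qed follows the statement, with a pointer to Truemper's short proof), so citing \cite{whitney:switch} as a black box is precisely what is done there. Your sketch of how the proof goes (cycle-matroid isomorphism, reduction along 2-separations, 3-connected base case) is a reasonable account of the standard argument, but none of it is needed for, or present in, the paper itself.
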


\noindent Truemper gave a short, simple proof of Theorem \ref{t: switch} \cite{truemper:switch}.

We now develop a straightforward generalization of Theorem \ref{t: switch} to the case of an arbitrary connected graph $G$.  A {\em block} $B \subset G$ is a maximal 2-connected subgraph of $G$.  In particular, the cut-edges of $G$ constitute its 1-edge blocks, and the cut-vertices of $G$ are the vertices of intersection between distinct blocks of $G$.  Let $T(G)$ denote the set of edges contained in some cycle in $G$; thus, $e \in T(G)$ iff $e$ is not a cut-edge.  A {\em 2-isomorphism} between a pair of connected graphs $G,G'$ is a cycle-preserving bijection between $T(G)$ and $T(G')$.

Given a cut-edge $e \in E(G)$, we contract it to obtain a new graph $G / e$.  We say that $G/e$ is obtained from $G$ by {\em cut-edge contraction}, and conversely that $G$ is obtained from $G/e$ by {\em cut-edge expansion}.  It is clear that both cut-edge contraction and expansion preserve the 2-isomorphism type of a graph.

With these definitions in place, we state the desired generalization of Theorem \ref{t: switch}.

\begin{prop}\label{p: switch}

A pair of connected graphs are 2-isomorphic iff they are related by a sequence of switches and cut-edge contractions and expansions.  Furthermore, only switches are necessary if the graphs are 2-edge-connected.

\end{prop}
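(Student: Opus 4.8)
The plan is to reduce Proposition~\ref{p: switch} to Whitney's Theorem~\ref{t: switch} by passing from a connected graph to its collection of blocks. First I would handle the ``if'' direction, which is essentially definitional: a switch, as defined above, identifies $E(G_i) \subset E(G)$ with $E(G_i) \subset E(G')$, and one checks that this bijection carries cycles to cycles; since a cycle never uses a cut-edge, it lives inside $T(G)$, so the switch restricts to a cycle-preserving bijection $T(G) \to T(G')$. Cut-edge contraction and expansion leave $T(G)$ and the cycle structure literally unchanged. Hence any sequence of such moves is a 2-isomorphism.

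For the ``only if'' direction, let $\phi : T(G) \to T(G')$ be a 2-isomorphism. The first step is to use cut-edge contractions on both $G$ and $G'$ to eliminate all cut-edges, producing graphs $H, H'$ with $E(H) = T(G)$, $E(H') = T(G')$ and with $\phi : E(H) \to E(H')$ now a genuine edge bijection preserving cycles. The key structural observation is that in a graph with no cut-edge, the edge sets of the blocks are exactly the equivalence classes of the relation ``$e \sim e'$ iff $e,e'$ lie on a common cycle'' (together with the reflexive pairs); since $\phi$ preserves cycles, it preserves this relation, hence induces a bijection between the blocks of $H$ and the blocks of $H'$, and on each corresponding pair of blocks $B \leftrightarrow B'$ it restricts to a 2-isomorphism of \emph{2-connected} graphs. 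Whitney's Theorem~\ref{t: switch} then expresses each $\phi|_B : B \to B'$ as a sequence of switches internal to that block. The remaining task is to assemble these block-level switches into switches of the whole graph: I would argue by induction on the number of blocks, peeling off a leaf block $B$ of the block tree of $H$ (one meeting the rest of $H$ in a single cut-vertex $v$). A switch performed ``inside'' $B$ is realized as a switch of $H$ by taking $G_1 = B$, $G_2 = H \setminus (E(B))$, with the two distinguished vertices of $G_2$ both equal to $v$ (so $w_i = v$ in the notation of the switch definition, degenerately), which makes it a \emph{special} switch; this reattaches $B$ to the rest of $H$ without disturbing the other blocks. After matching $B$ with its counterpart $B'$ via such special switches and the internal Whitney switches, induction finishes $H$. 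Finally, undo the cut-edge contractions by cut-edge expansions to return to $G$ and $G'$; this proves the first sentence. If $G, G'$ are already 2-edge-connected there were no cut-edges to contract, and leaf blocks attach along cut-vertices that are genuine vertices of the graph, so all the switches used are honest switches and no contractions/expansions appear --- giving the ``furthermore'' clause.

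I expect the main obstacle to be the bookkeeping in the inductive assembly step: one must verify that realizing a within-block switch as a special switch of the ambient graph does not change any other block, and that the cut-vertex structure is compatible at each stage so that the block tree shrinks predictably. The subtlety is that Whitney's switches for $B$ may move the distinguished vertex $v$ around within $B$, and one has to track where the attachment point of the rest of the graph sits after each switch; choosing the leaf-block order carefully (always peeling the block currently containing the ``active'' cut-vertex) and invoking the \emph{special} case of the switch definition --- where one side contributes an isolated vertex at the gluing point --- is what makes this go through cleanly. Everything else is routine given Theorem~\ref{t: switch} and the elementary block-decomposition facts recalled just before the statement.
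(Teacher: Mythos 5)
Your overall route is the paper's: contract cut-edges, identify the blocks as the equivalence classes of the ``lie on a common cycle'' relation, apply Whitney's Theorem~\ref{t: switch} block by block, and reassemble with special switches. The genuine problem is the step you yourself flag as the main obstacle, and as written it does not work: the move you propose for globalizing a Whitney switch inside a block $B$ --- take $G_1 = B$, $G_2 = H \setminus E(B)$ ``with the two distinguished vertices of $G_2$ both equal to $v$, degenerately'' --- is not a switch at all. The definition requires $v_i \neq w_i$ in each piece, and a \emph{special} switch is one in which one distinguished vertex is \emph{isolated} in its piece, not one in which the two distinguished vertices coincide; moreover, with that choice of $G_1, G_2$ the operation would re-glue the rest of $H$ onto $B$, not perform the internal switch of $B$ that Whitney hands you. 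The correct extension takes the two pieces $B_1, B_2$ of the Whitney switch (glued along $\{a,b\}$) and distributes the components of $H - E(B)$ between $G_1$ and $G_2$ according to which vertex of $B$ they hang from; that is exactly the bookkeeping you defer, and in addition, after each block has been switched into a copy of its counterpart, you still owe an argument that the two block trees of $H$ and $H'$ --- which the 2-isomorphism does not constrain --- can be matched up, which your leaf-block induction does not spell out.

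The paper sidesteps both issues with one normalization: before invoking Whitney, it performs special switches on each graph so that \emph{all} blocks share a single common vertex. Once the graphs are wedges of their blocks, every switch internal to a block extends verbatim to a switch of the whole graph (the rest of the graph is attached at a vertex of the block being switched, and the common-vertex property is preserved), and the final step of matching the two wedges of pairwise-isomorphic blocks is again just a sequence of special switches. Your ``furthermore'' clause is fine in spirit --- no contractions are needed when there are no cut-edges --- but it inherits the same unresolved assembly step. If you either adopt the common-vertex normalization or carry out the appendage-distribution argument honestly (and address the block-tree matching), your proof closes up.
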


\begin{proof}

For the first part, we just need to establish the forward implication.  Write $H \approx H'$ if $H$ is related to $H'$ by a sequence of switches and cut-edge contractions and expansions..  Clearly, $\approx$ defines an equivalence relation on graphs.  Now, suppose that $G$ and $G'$ are a pair of 2-isomorphic, connected graphs.  In each graph, contract all the cut-edges and perform a sequence of special switches so that there is a vertex in common to all remaining blocks (it will be the unique cut-vertex if there are multiple blocks).  The resulting graphs $G_0 \approx G$ and $G_0' \approx G'$ are 2-isomorphic by some mapping $\varphi$.  Put an equivalence relation $\sim$ on $E(H)$ by declaring $e \sim f$ if $e=f$ or $e$ and $f$ belong to some cycle.  Thus, the edge sets of blocks of $H$ are precisely the equivalence classes under $\sim$.  Since $\varphi$ clearly preserves $\sim$, it follows that $\varphi$ pairs the blocks of $G_0$ and $G'_0$, and furthermore defines a 2-isomorphism between each such pair $(B_0,B'_0)$.  By Theorem \ref{t: switch}, it follows that $B_0$ and $B'_0$ are related by a sequence of switches.  Each switch in $B_0$ extends to a switch in $G_0$, the composition of which results in a graph $\overline{G} \approx G_0$ whose blocks are isomorphic in pairs with those of $G'_0$.  A sequence of special switches now transforms $\overline{G}$ into $G'_0$.  Thus, $G \approx G_0 \approx \overline{G} \approx G'_0 \approx G'$, as desired.  Lastly, if $G$ and $G'$ are 2-edge-connected, then $G = G_0$ and $G'=G'_0$, so only switches are necessary to establish $G \approx G'$.

\end{proof}


\subsection{Graph lattices with the same $d$-invariant.}

For a pair of lattices $\Lambda_1, \Lambda_2$, write $\Lambda_1 \simeq \Lambda_2$ if $\Lambda_1 \oplus \Z^k \cong \Lambda_2$ or $\Lambda_1  \cong \Lambda_2 \oplus \Z^k$ for some $k$.  The following Proposition and its proof are essentially due to Bacher, et al. \cite[Prop.5]{bacheretal:lattices}.

\begin{prop}\label{p: main prep}

If $G$ and $G'$ are 2-isomorphic, then $\cF(G) \cong \cF(G')$ and $\cC(G) \simeq \cC(G')$.

\end{prop}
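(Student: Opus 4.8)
The plan is to reduce to the case treated by Whitney's switch theorem via Proposition \ref{p: switch}, since 2-isomorphic connected graphs are related by a sequence of switches and cut-edge contractions and expansions. A cut-edge $e$ of $G$ lies in no cycle, so in any spanning forest $F$ it contributes a basis vector $x_e \in B_\cC$ whose support is $\{e\}$ alone (as $\cut(F,e) = \{e\}$), and it contributes nothing to $B_\cF$; hence $\cF(G/e) \cong \cF(G)$ while $\cC(G) \cong \cC(G/e) \oplus \span(e) \cong \cC(G/e) \oplus \Z$. Thus cut-edge contraction/expansion leaves $\cF$ unchanged and changes $\cC$ by an orthogonal $\Z$-summand, so it suffices to treat a single switch.

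For a switch, I would argue at the level of cycle spaces inside the chain groups. Let $G$ and $G'$ be related by identifying $v_1,v_2 \mapsto v$, $w_1,w_2 \mapsto w$ (for $G$) versus $v_1,w_2\mapsto v'$, $w_1,v_2\mapsto w'$ (for $G'$), starting from disjoint graphs $G_1,G_2$ with distinguished vertex pairs. The key observation is that a subset of $E = E(G_1)\sqcup E(G_2)$ spans a cycle (i.e.\ lies in $\ker\del$) in $G$ if and only if it does so in $G'$: decomposing such a set as $Z_1 \sqcup Z_2$ with $Z_i \subset E(G_i)$, the boundary condition in $G$ says $\del_{G_1}(Z_1)$ is supported on $\{v_1,w_1\}$ with opposite signs and likewise $\del_{G_2}(Z_2)$ on $\{v_2,w_2\}$, matched up after identification — and exactly the same pair of conditions characterizes cycles of $G'$ (the identification pattern is different, but the constraint on each $G_i$ separately is identical, up to swapping the roles of $v_2,w_2$). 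Since $\cF(G) = \ker(\del)\cap C_1(G;\Z)$ with $E$ as the common orthonormal basis, this gives a canonical isometry $\cF(G) \cong \cF(G')$ fixing $E$. For the cut side, since $\cC$ and $\cF$ are complementary primitive orthogonal sublattices of the fixed $\Z^{|E|}$, we get $\cC(G) = \cF(G)^\perp$ and $\cC(G') = \cF(G')^\perp$ inside the same $\Z^{|E|}$; because the two flow lattices coincide as subsets, so do the two cut lattices, giving $\cC(G) \cong \cC(G')$ (hence certainly $\simeq$).

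Finally I would assemble the pieces: by Proposition \ref{p: switch}, $G$ and $G'$ are connected by finitely many moves each of which is either a switch (preserving both $\cF$ up to isomorphism and $\cC$ up to isomorphism, by the previous paragraph) or a cut-edge contraction/expansion (preserving $\cF$ and changing $\cC$ by an orthogonal $\Z^{\pm 1}$ summand). Composing, $\cF(G)\cong\cF(G')$ and $\cC(G)\simeq\cC(G')$, as desired. (Alternatively, one can invoke Corollary \ref{c: cut and cycle} and the $d$-invariant gluing relation to transfer the statement about $\cF$ to $\cC$ up to stabilization, which is exactly the content of $\simeq$.)

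I expect the main obstacle to be the switch case: specifically, pinning down cleanly that the cycle space is literally unchanged as a subset of $\bigoplus_{e\in E}\Z e$, rather than merely abstractly isomorphic. The subtlety is bookkeeping the two boundary constraints on $G_1$ and $G_2$ and checking that swapping which of $v_2,w_2$ is glued to which of $v_1,w_1$ does not alter which edge-subsets satisfy them — this is where one must be careful about signs and the fact that a cycle's boundary restricted to each side is forced to be a multiple of $v_i - w_i$. Once that is nailed down, everything else (the cut side via orthogonal complements, and the cut-edge moves) is routine.
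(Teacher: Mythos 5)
Your reduction via Proposition \ref{p: switch} and your treatment of cut-edge contraction are fine and agree with the paper. The gap is exactly the point you flagged as the crux of the switch case: the claim that the flow lattice is \emph{literally unchanged as a subset} of $\bigoplus_{e \in E} \Z e$, so that one gets ``a canonical isometry $\cF(G) \cong \cF(G')$ fixing $E$,'' is false. The matching constraint across the two sides differs by a sign between $G$ and $G'$: for a chain $x+y$ with $x \in C_1(G_1;\Z)$, $y \in C_1(G_2;\Z)$, being a flow in $G$ forces $\L \del x, v_1 \R + \L \del y, v_2 \R = 0$, while being a flow in $G'$ forces $\L \del x, v_1 \R + \L \del y, w_2 \R = 0$, which by the augmentation identity equals $\L \del x, v_1 \R - \L \del y, v_2 \R = 0$. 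Concretely, take $G_1$ a single edge $e_1$ directed from $w_1$ to $v_1$ and $G_2$ a single edge $e_2$ directed from $w_2$ to $v_2$: then $\cF(G) = \span(e_1 - e_2)$ while $\cF(G') = \span(e_1 + e_2)$, which are distinct sublattices of $\Z e_1 \oplus \Z e_2$. So the unsigned cycle \emph{matroid} is unchanged by a switch, but the signed cycle space is not, and your subsequent step identifying $\cC(G)$ with $\cC(G')$ as equal orthogonal complements inside one copy of $\Z^{|E|}$ inherits the same problem.

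The repair is small and is precisely the paper's device: instead of the identity on $E$, use the isometry of the ambient lattice that negates the edges of $G_2$ (equivalently, compare the identifications $(x,y) \mapsto x+y$ with $C_1(G;\Z)$ and $(x,y) \mapsto x-y$ with $C_1(G';\Z)$). Under this sign flip the two matching conditions above become the same condition, so the two sublattices of $\widetilde{\cF}(G_1) \oplus \widetilde{\cF}(G_2)$ coincide and $\cF(G) \cong \cF(G')$; and since the sign flip is an isometry of $C_1(G;\Z) \cong C_1(G';\Z)$, it carries orthogonal complements to orthogonal complements, giving $\cC(G) \cong \cC(G')$ exactly as you intended. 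With that one correction your argument matches the paper's proof.
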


\begin{proof}

By Proposition \ref{p: switch}, it suffices to establish the statement of the Proposition under the assumption that $G'$ is obtained from $G$ by a switch or a cut-edge contraction.  

Suppose first that $G' = G/e$ for a cut-edge $e \in E(G)$.  An orientation on $G$ induces one on $G'$ and identifies $e$ with a basis element in $C_1(G;\Z)$.  We obtain a natural isomorphism between $e^\perp \subset C_1(G;\Z)$ and $C_1(G';\Z)$.  This isomorphism clearly carries $\cF(G) \subset e^\perp$ onto $\cF(G')$ and $\cF(G)^\perp \cap e^\perp$ onto $\cF(G')^\perp = \cC(G')$.  Note as well that $\cC(G) = \cF(G)^\perp =( \cF(G)^\perp \cap e^\perp) \oplus (e) \cong \cC(G') \oplus \Z$. Thus, $\cF(G) \cong \cF(G')$ and $\cC(G) \simeq \cC(G')$.

Next, suppose that $G$ and $G'$ are related by a switch.  Choose an orientation $\O_i$ of $G_i$, $i=1,2$, and let $\O, \O'$ denote the induced orientations of $G, G'$.  Doing so leads to natural isomorphisms
\[ C_1(G_1;\Z) \oplus C_1(G_2;\Z) \overset{\sim}{\to} C_1(G;\Z), \quad (x,y) \mapsto x+y \]
and
\[ C_1(G_1;\Z) \oplus C_1(G_2;\Z) \overset{\sim}{\to} C_1(G';\Z), \quad (x,y) \mapsto x-y .\]
Define
\[ \widetilde{\cF}(G_i) = \{ x \in C_1(G_i;\Z) \; | \;  \L \del x,u \R = 0, \forall u \ne v_i,w_i\}, \quad  i=1,2.\]
Observe that $\im(\del)$ is contained in the kernel of the augmentation map $C_0 \to \Z$ defined by sending all vertices to $1$.  It follows that $x \in \widetilde{\cF}(G_i)$ satisfies
\begin{equation}\label{e: aug}
\L \del x, v_i \R + \L \del x, w_i \R = 0, \quad i=1,2.
\end{equation}
Thus, the preceding isomorphisms restrict to isomorphisms
\[ \{ (x,y) \in \widetilde{\cF}(G_1) \oplus\widetilde{\cF}(G_2) \; | \; \L \del x , v_1 \R + \L \del y, v_2 \R = 0 \} \overset{\sim}{\to} \cF(G)\]
and
\[ \{ (x,y) \in \widetilde{\cF}(G_1) \oplus \widetilde{\cF}(G_2) \; | \; \L \del x, w_1 \R - \L \del y, v_2 \R = 0 \} \overset{\sim}{\to} \cF(G'),\]
respectively.  By \eqref{e: aug}, the two sublattices of $\widetilde{\cF}(G_1) \oplus \widetilde{\cF}(G_2)$ appearing here coincide.  Thus, $\cF(G) \cong \cF(G')$.  Since this isomorphism is induced  by the isomorphism $C_1(G;\Z) \cong C_1(G';\Z)$, it follows that their orthogonal complements are isomorphic as well: $\cC(G) \cong \cC(G')$.  This completes the proof of the Proposition.

\end{proof}

We now state our main combinatorial result, an extension of Theorem \ref{t: main graph preview}.

\begin{thm}\label{t: main graph}

The following assertions are equivalent for a pair of connected graphs $G, G'$:

\begin{enumerate}

\item $G$ and $G'$ are 2-isomorphic;

\item $\cF(G) \cong \cF(G')$;

\item $\cC(G) \simeq \cC(G')$;

\item the $d$-invariants of $\cF(G)$ and $\cF(G')$ are isomorphic; and

\item the $d$-invariants of $\cC(G)$ and $\cC(G')$ are isomorphic.

\end{enumerate}

\end{thm}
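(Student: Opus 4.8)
The plan is to prove Theorem~\ref{t: main graph} by establishing a cycle of implications, using the lattice-theoretic machinery already developed (Corollary~\ref{c: cut and cycle}, Proposition~\ref{p: gluing}, Proposition~\ref{p: rigid}, Proposition~\ref{p: main prep}) together with Elkies's theorem. The easy direction is (1)$\implies$(2)$\wedge$(3), which is precisely Proposition~\ref{p: main prep}. The implications (2)$\implies$(4) and (3)$\implies$(5) are also immediate, since isomorphic lattices have isomorphic $d$-invariants (and the $d$-invariant is insensitive to adding $\Z^k$ summands, by additivity, so $\cC(G)\simeq\cC(G')$ indeed gives isomorphic $d$-invariants). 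So the real content is to close the cycle: I would show (4)$\implies$(1) and (5)$\implies$(1), and in fact (4) and (5) are equivalent to one another via Corollary~\ref{c: cut and cycle}, which identifies $(C(\cC(G)),d_\cC)$ with $(C(\cF(G)),-d_\cF)$ naturally. Thus it suffices to prove a single implication, say (4)$\implies$(1): \emph{if $\cF(G)$ and $\cF(G')$ have isomorphic $d$-invariants, then $G$ and $G'$ are $2$-isomorphic.}

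Here is the strategy for (4)$\implies$(1). Suppose $\varphi\colon (C(\cF(G)),d_\cF)\to(C(\cF(G')),d_{\cF'})$ is an isomorphism. Composing with the natural isomorphism of Corollary~\ref{c: cut and cycle} applied to $G'$, we obtain an isomorphism $(C(\cF(G)),d_\cF)\to(C(\cC(G')),-d_{\cC'})$. Now invoke the converse half of Proposition~\ref{p: gluing}: the glue lattice $\Lambda:=\cF(G)\oplus_\varphi\cC(G')$ is isomorphic to $\Z^n$ (where $n=\rk\cF(G)+\rk\cC(G')$; note the ranks agree with what one needs since the discriminant groups are isomorphic and $\rk\cF(G)=|E(G)|-|V(G)|+1$ while $\rk\cC(G')=|V(G')|-1$, so one must first check $|E(G)|=|E(G')|$, which follows because isomorphic discriminant groups force $\disc\cF(G)=\disc\cF(G')$, i.e. equal number of spanning trees is not quite enough---instead equality of edge counts comes out of matching the short-covector combinatorics below, or one builds it in). Inside $\Lambda\cong\Z^n$ we now have $\cF(G)$ and $\cC(G')$ as complementary primitive sublattices, and by Proposition~\ref{p: gluing} the restriction maps $\Short(\Z^n)\to\Short(\cF(G))$ and $\Short(\Z^n)\to\Short(\cC(G'))$ both surject. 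At this point I would use Proposition~\ref{p: rigid} twice, or rather a symmetric packaging of it: using Lemma~\ref{l: special basis}, pick spanning-forest bases $B_\cF$ for $\cF(G)$ and $B_\cC$ for $\cC(G')$ with the $\{0,1\}$-pairing property against the respective edge sets; the hypotheses of Proposition~\ref{p: rigid} are met (the $\{0,1\}$ condition~\eqref{e: 0/1}, full support~\eqref{e: full support} since every cycle/cut edge lies in its own fundamental cycle/cut, and surjectivity of the short-covector restrictions from Corollary~\ref{c: cut and cycle} and Proposition~\ref{p: gluing}). Rigidity then gives a single orthonormal basis of $\Z^n$ that is compatible with both the standard edge basis of $C_1(G;\Z)\supset\cF(G)$ and that of $C_1(G';\Z)\supset\cC(G')$, which is exactly a bijection $E(G)\to E(G')$.

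The final step is to check that this bijection $\psi\colon E(G)\to E(G')$ is a $2$-isomorphism, i.e. carries cycles of $G$ to cycles of $G'$. The point is that $\psi$ is set up so that $\cF(G)$, viewed inside $\Z^n$ via the $E(G)$-labelling, equals $\cC(G')$'s \emph{orthogonal complement} inside the same $\Z^n$ viewed via the $E(G')$-labelling---that is, $\psi$ identifies $\cF(G)$ with $\cF(G')$ as sublattices of $\Z^{E}$. Since the cycle space over $\Z$ determines (and is determined by) the integral flow lattice, and elements of $\cF(G)$ supported with $\pm1$ coefficients on a minimal support are exactly the signed simple cycles, a flow-lattice isomorphism induced by a coordinate bijection is automatically cycle-preserving; one spells this out by noting the simple cycles are the $\pm1$-valued vectors of minimal support in $\cF(G)$, a property visibly preserved by $\psi$. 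Hence $G$ and $G'$ are $2$-isomorphic, closing the cycle of implications and proving the theorem.

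The main obstacle I anticipate is not any single implication but the bookkeeping around \emph{which} lattices to glue and making sure the rank/edge-count matches up before one can even form $\cF(G)\oplus_\varphi\cC(G')$: one needs $\overline{\cF(G)}\cong\overline{\cC(G')}$ as groups carrying compatible forms, which is where Lemma~\ref{l: rho => disc} and the fact (Corollary~\ref{c: cut and cycle}) that $d_\cC$ on $G'$ is $-d_\cF$ on $G'$ come in, and one must then argue $|E(G)|=|E(G')|$ rather than merely $\disc\cF(G)=\disc\cF(G')$. I expect to resolve this by running the rigidity argument first at the level of $\Short$-sets (which live in $\{\pm1\}^{E}$ and hence ``see'' the edge count directly) so that the equality of edge counts falls out of the bijection between atoms in Proposition~\ref{p: rigid} rather than being assumed; alternatively, absorbing any discrepancy into a $\Z^k$ summand and using the $\simeq$ relation, exactly as in the statement of clause~(3). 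The second, more delicate point is verifying the hypothesis~\eqref{e: full support} of Proposition~\ref{p: rigid} and the surjectivity of both short-covector restriction maps simultaneously for the pair $(\cF(G),\cC(G'))$ inside the \emph{glued} $\Z^n$, as opposed to inside the ambient $C_1(G;\Z)$ and $C_1(G';\Z)$ separately where Corollary~\ref{c: cut and cycle} supplies them; here the converse clause of Proposition~\ref{p: gluing} is exactly what bridges the gap, so the logical order---glue, apply Elkies via Proposition~\ref{p: gluing}, then apply Proposition~\ref{p: rigid}---is what makes the argument go through.
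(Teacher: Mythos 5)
Your route is the paper's route (reduce everything to one implication via Corollary \ref{c: cut and cycle}, glue a cut lattice to a flow lattice, apply Elkies through Proposition \ref{p: gluing}, then use Proposition \ref{p: rigid} to rigidify), but the step you yourself flag as the ``main obstacle'' is a genuine gap, and your two suggested fixes do not close it. Proposition \ref{p: rigid} applied to $\Lambda=\cF(G)\subset C_1(G;\Z)$ and $\Lambda=\cC(G')\subset C_1(G';\Z)$ only produces \emph{embeddings} $\iota\colon C_1(G;\Z)\into Z$ and $\iota'\colon C_1(G';\Z)\into Z$ into the single glue lattice $Z=\cF(G)\oplus_\varphi\cC(G')$; this gives the inequalities $|E(G)|,|E(G')|\le \rk Z=\rk\cF(G)+\rk\cC(G')$, which by themselves say only $\rk\cC(G)\le\rk\cC(G')$ and $\rk\cF(G')\le\rk\cF(G)$, and no bijection $E(G)\to E(G')$ yet exists. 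The equality of edge counts does not ``fall out of the atoms'' of Proposition \ref{p: rigid} (the atoms only partition the edges lying in the images, i.e.\ they reprove the inequalities), and ``absorbing the discrepancy into a $\Z^k$ summand'' is not meaningful here, since the target of the bijection must be the full edge set for the final cycle-preservation argument. The paper's missing ingredient is to perform the gluing \emph{in both directions}: form the second glue lattice $Z'=\cC(G')\oplus_\psi\cF(G)$ (equivalently, with the roles of $G$ and $G'$ exchanged), apply Proposition \ref{p: rigid} again to get two more embeddings, and observe that $\rk C_1(G;\Z)+\rk C_1(G';\Z)=\rk Z+\rk Z'$, so all four rank inequalities are equalities and the embeddings are isomorphisms; only then does $f=(\iota')^{-1}\circ\iota$ restrict to a bijection $E(G)\to E(G')$. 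Once that is in place, your concluding argument (support-minimal $\pm1$ vectors of $\cF$ are the signed circuits, and $f$ preserves supports) is a fine substitute for the paper's Eulerian-subgraph decomposition.

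A second, smaller gap: the theorem is stated for \emph{connected} graphs, and your verification of hypothesis \eqref{e: full support} (``every cycle/cut edge lies in its own fundamental cycle/cut'') is false for the flow side when $G$ has cut-edges: a cut-edge lies in no cycle at all, so the corresponding coordinate of $C_1(G;\Z)$ pairs trivially with all of $\cF(G)$ and Proposition \ref{p: rigid} cannot apply; moreover cut-edges are invisible to $\cF(G)$, so no argument could recover a bijection of full edge sets in that case. The paper handles this by first contracting all cut-edges, noting this changes neither the 2-isomorphism type (as defined via $T(G)$) nor $\cF$, and changes $\cC$ only by $\Z^k$ summands, and then runs the argument for 2-edge-connected graphs, where \eqref{e: full support} holds for the flow lattice because every edge lies in a cycle (and for the cut lattice because every edge lies in a cut). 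You need to add this reduction explicitly before invoking Lemma \ref{l: special basis} and Proposition \ref{p: rigid}.
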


\begin{proof}

(1)$\implies$(2),(3) follow from Proposition \ref{p: main prep}; (2)$\implies$(4) and (3)$\implies$(5) are clear (using additivity of the $d$-invariant in the second case); and (4)$\iff$(5) follows from Corollary \ref{c: cut and cycle}.  We proceed to establish (5) $\implies$(1), from which the Theorem follows.

Note that contracting all the cut-edges in a connected graph results in a 2-edge-connected graph  with the same 2-isomorphism type.  Hence it suffices to establish (5)$\implies$(1) under the assumption that both graphs are 2-edge-connected.  Thus, suppose that $(C(\cC(G)),d_\cC) \cong (C(\cC(G')),d'_\cC)$ for a pair of 2-edge-connected graphs $G, G'$. By Corollary \ref{c: cut and cycle}, there exists a natural isomorphism $(C(\cC(G')),d'_\cC) \overset{\sim}{\to} (C(\cF(G')),-d'_\cF)$.  Consequently, we obtain an isomorphism
\[ \varphi: (C(\cC(G)),d_\cC) \overset{\sim}{\to} (C(\cF(G')),-d'_\cF).\]
Proposition \ref{p: gluing} implies that the glue lattice
\[ Z_2 := \cC(G) \oplus_\varphi \cF(G') \]
admits an orthonormal basis, and furthermore that the restriction maps
\[ \Short(Z_2) \to \cC(G), \, \cF(G') \]
surject.

Now, let $(\Lambda, Z_1)$ denote either pair $(\cC(G),C_1(G;\Z))$ or $(\cF(G'),C_1(G';\Z))$.  By Corollary \ref{c: cut and cycle}, the restriction map $\Short(Z_1) \to \Lambda$ surjects, and by Lemma \ref{l: special basis}, $Z_1$ admits an orthonormal basis $B_1$ such that \eqref{e: 0/1} holds.  Every edge of $G$ is contained in some cut, and by 2-edge-connectivity, every edge of $G'$ is contained in some cycle.  It follows in either case that \eqref{e: full support} holds.  Thus, Proposition \ref{p: rigid} applies and furnishes embeddings $\iota$, $\iota'$ such that the following diagram commutes:
\begin{equation}\label{e: diagram}
\xymatrix {
C_1(G;\Z) \; \ar@{^{(}->}[rr]^-\iota & & Z_2 & & \; C_1(G';\Z) \ar@{_{(}->}[ll]_-{\iota'} \cr
& \cC(G) \ar@{_{(}->}[ul]^-{\iota^{\phantom{o}}_1} \ar@{^{(}->}[ur]_-{\iota^{\phantom{o}}_2} & & \cF(G') \ar@{_{(}->}[ul]^-{\iota'_2} \ar@{^{(}->}[ur]_-{\iota'_1} &
}
\end{equation}

Switching the roles of $G$ and $G'$, we can repeat the same construction with respect to the glue lattice
\[ Z'_2 := \cC(G') \oplus_\psi \cF(G),\]
using an isomorphism
\[ \psi: (C(\cC(G')),d'_\cC) \overset{\sim}{\to} (C(\cF(G)),-d_\cF).\]
We obtain two more embeddings from Proposition \ref{p: rigid}, leading to a total of four inequalities
\begin{equation}\label{e: four ineq}
\rk(C_1(G;\Z)), \, \rk(C_1(G';\Z)) \leq \rk(Z_2), \, \rk(Z'_2).
\end{equation}
On the other hand, 
\begin{eqnarray*}
\rk(C_1(G;\Z)) + \rk(C_1(G';\Z)) &=& \rk(\cC(G)) + \rk(\cF(G)) + \rk(\cC(G')) + \rk(\cF(G')) \\
&=& \rk(Z_2) + \rk(Z'_2).
\end{eqnarray*}
Hence each inequality \eqref{e: four ineq} is an equality, so the embeddings $\iota$, $\iota'$ are actually isomorphisms.  Thus, we obtain a composite isomorphism
\[ f:= (\iota')^{-1} \circ \iota :  C_1(G;\Z) \overset{\sim}{\to} C_1(G';\Z), \]
and restricting $f$ to the orthonormal bases induces a bijection
\[f_E: E(G) \overset{\sim}{\to} E(G').\]

We claim that $f_E$ is a 2-isomorphism.  First note that \eqref{e: diagram} and Propositions \ref{p: gluing} and \ref{p: primitive} show that $f$ carries $\cF(G)$ isomorphically onto $\cF(G')$.  Now let $C$ denote a cycle in $G$.  With an orientation $\O$ of $G$ fixed and an arbitrary edge $e \in E(C)$ distinguished, we obtain an element $x(C) \in \cF(G) \subset C_1(G;\Z)$ as in $\S$\ref{ss: bases}.  Thus we obtain an element $f(x(C)) \in \cF(G') \subset C_1(G';\Z)$ with the property that
\[ |\L f(x(C)), e' \R| \in \{0,1\}, \quad \forall e' \in C_1(G';\Z), |e'| = 1. \]
With an orientation $\O'$ of $G'$ fixed, it follows that the subgraph $f_E(E(C))$ is an (oriented) Eulerian subgraph of $G'$.  Hence it decomposes into an edge-disjoint union of directed cycles.  Choose one and denote it by $C'$.  By symmetry, $f_E^{-1}$ carries $C'$ onto a non-empty Eulerian subgraph of $C$; but since $C$ is a cycle, it follows that $f_E^{-1}(E(C')) = E(C)$.  Hence $f_E$ carries the cycle $C$ to the cycle $C'$. Since $C$ was arbitrary, it follows that $f_E$ is a 2-isomorphism, as claimed.

\end{proof}


\section{Conway Mutation.}\label{s: mutation}


\subsection{Planar graphs.}

By abuse of terminology, we regard a plane drawing $\Gamma$ of a planar graph $G$ as an embedding in the sphere $S^2 = \bR^2 \cup \{ \infty \}$.

Connectivity properties of $G$ are reflected by the topology of $\Gamma$ in the following way.  Suppose that $\{v, w\}$ is a cut-set in $G$, where $v,w \in V(G)$ need not be distinct.  Then there exists a circle $S^1 \subset S^2$ such that $S^1 \cap \Gamma = \{v,w\}$ and both components of $S^2 - S^1$ contain a vertex of $G$.  Conversely, given such a circle with $S^1 \cap \Gamma = \{v,w\}$, it follows that $\{v,w\}$ is a cut-set in $G$.

Choose either disk bounded by $S^1$ and reglue it by an orientation-reversing homeomorphism that fixes $v$ and $w$.  Doing so results in another plane drawing $\Gamma'$ of $G$, and we say that $\Gamma, \Gamma'$ differ by a {\em flip}.  Conversely, we have the following result.

\begin{prop}[Mohar-Thomassen \cite{mt:book}, Thm.2.6.8\footnote{As remarked on \cite[p.3]{mt:book}, the results of that book are stated for {\em simple} graphs, i.e. those without parallel edges, but most results (including this one) apply, {\em mutatis mutandis}, to graphs with parallel edges.}]\label{p: flip}

Any two plane drawings of a 2-connected planar graph $G$ are related by a sequence of flips and isotopies in the sphere. \qed

\end{prop}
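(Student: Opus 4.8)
The plan is to pass from plane drawings to their combinatorial shadows and then induct on the number of edges, peeling off $2$-cuts and invoking Whitney's uniqueness theorem for $3$-connected planar graphs at the base of the recursion.

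\emph{Combinatorial reformulation.} Since $G$ is $2$-connected, the boundary walk of every face of a plane drawing $\Gamma$ is a cycle of $G$; record the set $\mathcal{F}(\Gamma)$ of these facial cycles. I would first observe that $\mathcal{F}(\Gamma)$ determines $\Gamma$ up to homeomorphism of $S^2$: one recovers $S^2$ from $G$ by attaching a disk along each cycle in $\mathcal{F}(\Gamma)$, so two drawings with the same facial cycles differ by a homeomorphism of $S^2$ fixing $G$ setwise, and, up to isotopy, such a homeomorphism is either the identity, a reflection, or realized by a flip along a circle meeting $\Gamma$ in a cut set. In the other direction, a flip alters $\mathcal{F}(\Gamma)$ only in a controlled, explicitly describable way (it reverses the cyclic arrangement of the bridges across the relevant $2$-cut). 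Thus it suffices to prove that the facial-cycle sets of any two drawings of $G$ are connected by a sequence of such combinatorial flips, while keeping an eye on the convention under which a global reflection counts as an isotopy of the sphere.

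\emph{Induction, with Whitney at the base.} Induct on $|E(G)|$. If $G$ is $3$-connected, Whitney's theorem identifies the facial cycles intrinsically -- they are exactly the induced non-separating cycles of $G$ -- so $\mathcal{F}(\Gamma) = \mathcal{F}(\Gamma')$ and no flip is needed. If $G$ is $2$-connected but not $3$-connected, fix a genuine $2$-cut $\{u,v\}$ and let $G_1,\dots,G_k$ (with $k \geq 2$) be the $\{u,v\}$-bridges, each enlarged to a $2$-connected graph $G_i^+$ by a virtual edge joining $u$ and $v$; since $k \geq 2$, each $G_i^+$ has strictly fewer edges than $G$. Every drawing of $G$ restricts, bridge by bridge, to a drawing of each $G_i^+$; conversely a drawing of $G$ is assembled from drawings of the $G_i^+$ by deleting the virtual edges and arranging the bridges cyclically around $u$ and around $v$ with a choice of side for each. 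Applying the inductive hypothesis to each $G_i^+$ makes the individual bridge drawings agree up to flips internal to $G$, while flips at the cut $\{u,v\}$ generate the remaining freedom in the cyclic arrangement and the side choices; chaining these moves connects $\Gamma$ to $\Gamma'$.

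\emph{Main obstacle.} The delicate part is the bookkeeping at a $2$-cut: one must treat parallel edges between $u$ and $v$ as singleton bridges, guarantee that the enlarged pieces $G_i^+$ are genuinely smaller (forcing the chosen $\{u,v\}$ to be a true cut, available precisely because $G$ is not $3$-connected), track how nested or crossing $2$-cuts interact, and confirm that every rearrangement of the cyclic order and sides of the bridges is realized by a flip of the \emph{full} drawing rather than merely of a piece. A cleaner if heavier route replaces the ad hoc bridge analysis by the Tutte (SPQR) decomposition of a $2$-connected graph into $3$-connected pieces, polygons, and bonds glued along a tree of virtual edges: each piece has an essentially unique embedding by Whitney, the embeddings of $G$ correspond bijectively to independent flip choices at the virtual edges, and flips of $\Gamma$ realize exactly these transitions. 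I would adopt whichever formulation keeps the write-up tightest, and I expect the verification that flips at a $2$-cut generate the full gluing symmetry to be the only genuinely fiddly point.
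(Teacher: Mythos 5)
The first thing to note is that the paper does not prove Proposition \ref{p: flip} at all: it is quoted from Mohar--Thomassen and used as a black box, so there is no internal argument to compare yours with. What you propose is a reconstruction of the cited theorem, and your strategy --- Whitney's unique-embedding theorem for $3$-connected planar graphs as the base case, plus a decomposition of a $2$-connected graph along its $2$-cuts (equivalently the Tutte/SPQR tree), with flips accounting for the regluing freedom --- is indeed the standard route to such a statement.

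As a proof, however, the sketch stops short of the actual content. The step you defer as ``fiddly'' --- that flips of the \emph{full} drawing realize every change of cyclic arrangement and side choice of the bridges at a $2$-cut, and that these moves, across all the possibly nested or crossing $2$-cuts, connect any two embeddings --- \emph{is} the theorem; the rest is framing. Two concrete problems should also be repaired. First, the induction is not well-founded as stated: if one of the $\{u,v\}$-bridges is a single parallel edge $uv$, then for another bridge $G_i$ the enlarged graph $G_i^{+}$ has $|E(G_i)|+1$ edges, which equals $|E(G)|$ when $k=2$ (indeed $G_i^{+}\cong G$ in that case), so the claimed strict decrease fails; this is exactly the multigraph regime the paper's footnote warns about, and you need a different induction parameter or a separate treatment of parallel classes. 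Second, the base case ``$\mathcal{F}(\Gamma)=\mathcal{F}(\Gamma')$, hence no flip is needed'' identifies equality of facial-cycle systems with isotopy, whereas equal facial structures only give a homeomorphism of $S^2$ that may reverse orientation; for a chiral $3$-connected map (e.g.\ the graph of the snub cube) a drawing and its mirror image are not related by any orientation-preserving isotopy and admit no flips, so the statement is true only if ``isotopy'' is understood to allow an overall reflection (compare the explicit ``isotopy and reversal'' in Proposition \ref{p: 2bridge}). You flag this convention in passing, but it must be pinned down, not postponed, since it determines whether your base case is even correct.
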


Alternatively, choose either disk bounded by $S^1$ and reglue it by a homeomorphism that exchanges $v$ and $w$.  Doing so results in a plane drawing $\Gamma'$ of a graph $G'$ related to $G$ by a switch. We say that $\Gamma, \Gamma'$ differ by a {\em planar switch}.  The planar switch is {\em positive} or {\em negative} according to whether the homeomorphism preserves or reverses orientation. Note that a positive and negative planar switch differ by composition with a flip.

Lastly, suppose that there exists a pair of disks $D^2_1, D^2_2$ such that $D^2_1 \cap \Gamma = \Gamma \cap D^2_2 = D^2_2 \cap D^2_1 = \{v \}$ for some $v \in V(\Gamma)$.  Exchange $D^2_1$ and $D^2_2$ by a homeomorphism that preserves $v$; doing so results in another plane drawing $\Gamma'$ of $G$, and we say that $\Gamma, \Gamma'$ differ by a {\em swap}.  The swap is {\em positive} or {\em negative} according to whether the homeomorphism preserves or reverses orientation.

Examples of flips, planar switches, and swaps appear in Figures \ref{f: mutation1}-\ref{f: mutation3}.

\begin{lem}\label{l: swap}

Any two plane drawings $\Gamma, \Gamma'$ of a connected planar graph $G$ are related by a sequence of flips, swaps, and isotopies.

\end{lem}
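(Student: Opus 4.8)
\textbf{Proof plan for Lemma \ref{l: swap}.}

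The plan is to reduce the connected case to the 2-connected case handled by Proposition \ref{p: flip}, using the block structure of $G$ and an induction on the number of blocks (equivalently, on the number of edges, or on the number of cut-vertices). First I would observe that if $G$ has at most one block then it is 2-connected (or a single vertex/edge), so Proposition \ref{p: flip} already gives the conclusion, and flips alone suffice. So assume $G$ has a cut-vertex $v$, and fix the two plane drawings $\Gamma, \Gamma'$. The key structural fact I want to exploit is that in a plane drawing, the blocks of $G$ incident to a cut-vertex $v$ sit inside pairwise-disjoint disks meeting only at $v$: around $v$ the drawing looks like finitely many "petals", one for (the sub-drawing induced by) each block through $v$, arranged cyclically. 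A swap at $v$ is exactly the operation that interchanges two such petals by a homeomorphism of a pair of disks meeting at $v$; thus swaps let me freely permute the cyclic order of the petals around $v$, and (via the sign of the swap) reflect an individual petal.

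The main step is then: pick a cut-vertex $v$ and one block $B$ through it. In $\Gamma$, the sub-drawing of $G$ lying in $B$'s petal at $v$ is a plane drawing of the subgraph $G_B \subseteq G$ consisting of $B$ together with everything of $G$ that is separated from the rest of $G$ by $v$ through $B$; call the complementary part $G_v$, so $G = G_B \cup G_v$ with $G_B \cap G_v = \{v\}$, and both are connected with strictly fewer blocks than $G$. By the inductive hypothesis, the two induced plane drawings of $G_B$ are related by flips, swaps, and isotopies, and likewise for $G_v$; each such flip/swap/isotopy of a summand is carried out inside its petal disk and hence extends to a flip/swap/isotopy of the ambient drawing of $G$. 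Performing these, I may assume $\Gamma$ and $\Gamma'$ induce isotopic drawings of $G_B$ and of $G_v$ separately. What can still differ is (a) the cyclic position of $B$'s petal among all petals at $v$, and (b) the orientation with which $B$'s petal is inserted; both discrepancies are removed by a sequence of swaps at $v$ (swaps to rotate the petal into position, and a swap of the appropriate sign to fix its reflection, noting as remarked after Proposition \ref{p: flip} that sign can be adjusted by a flip). Iterating over all blocks through all cut-vertices — i.e. completing the induction — aligns $\Gamma$ and $\Gamma'$ block by block, after which they are isotopic. Hence $\Gamma$ and $\Gamma'$ are related by flips, swaps, and isotopies.

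The part I expect to be the main obstacle is making the "petal" picture precise enough to justify that (i) every flip, swap, or isotopy of a summand $G_B$ or $G_v$ genuinely extends to the stated operation on the whole drawing (one must keep the homeomorphism supported in the petal disk and identity near $v$, so that the rest of $G$ is untouched), and (ii) the only residual freedom at a cut-vertex, once each summand is pinned down up to isotopy, is the cyclic arrangement and reflection of the petals, which is exactly what swaps generate. Both are intuitively clear from planarity but require a careful choice of the separating circles and disks; once that bookkeeping is set up, the induction is routine and no nontrivial calculation is involved.
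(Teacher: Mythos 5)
Your proposal is correct and follows essentially the same route as the paper's own (sketched) proof: reduce to the 2-connected case via Proposition \ref{p: flip}, split $G$ at a cut-vertex $v$ into subgraphs meeting only in $v$, use swaps to normalize the arrangement of the ``petal'' disks at $v$, and induct, carrying out the inductive operations inside the individual disks. The only cosmetic differences are that the paper decomposes into all branches at $v$ simultaneously and inducts on $|V|$, whereas you peel off one block-branch at a time and induct on the number of blocks; the bookkeeping issue you flag (supporting the operations in the petal disks) is precisely the point the paper also leaves at the level of a sketch.
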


\begin{proof}[Proof sketch.]

If $G$ is 2-connected, then Proposition \ref{p: flip} applies at once, so suppose otherwise and fix a cut-vertex $v \in V(G)$.  Decompose $G$ uniquely into a maximal collection of subgraphs $G_1,\dots,G_k$ that intersect pairwise in $v$.  For each $i$, let $\Gamma_i \subset \Gamma, \Gamma'_i \subset \Gamma'$ denote the induced plane drawings of $G_i$.  Reindexing the subgraphs if necessary, there exists a sequence of disks $D^2_1, \dots, D^2_k \subset S^2$ whose boundaries intersect pairwise in $v$ and such that
\[ \Gamma_i = \Gamma \cap (D^2_i \setminus \text{int} (\bigcup_{j=1}^{i-1} D^2_j)), \quad \forall i.\]
A sequence of at most $k-2$ swaps results in a plane drawing $\overline{\Gamma}$ of $G$ such that there exist disks $\overline{D^2_1},\dots,\overline{D^2_k}$ that intersect pairwise in $v$ and satisfy $\overline{\Gamma}_i = \overline{\Gamma} \cap \overline{D^2_i}$.  Similarly, a sequence of swaps and isotopies transforms $\Gamma'$ into a plane drawing $\overline{\Gamma}'$ of $G$ such that $\overline{\Gamma}'_i = \overline{\Gamma}' \cap \overline{D^2_i}$ for this same collection of disks.  By induction on $|V|$, there exists a sequence of flips, swaps, and isotopies supported in $\overline{D^2_i}$ that transforms $\overline{\Gamma}_i \subset S^2_i := \overline{D^2_i} / \del \overline{D^2_i}$ into $\overline{\Gamma}'_i \subset S^2_i$.  This sequence, together with another sequence of swaps in $S^2$, transforms $\overline{\Gamma}$ into $\overline{\Gamma}'$.  Thus, $\Gamma$ and $\Gamma'$ are related in the desired manner.

\end{proof}

\begin{cor}\label{c: flip and switch}

Let $\Gamma, \Gamma'$ denote plane drawings of a pair of 2-isomorphic, 2-edge-connected planar graphs $G,G'$.  Then $\Gamma$ and $\Gamma'$ are related by a sequence of flips, planar switches, swaps, and isotopies.

\end{cor}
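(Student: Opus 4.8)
The plan is to express the 2-isomorphism combinatorially and then realize each elementary step at the level of plane drawings, using Lemma \ref{l: swap} to renormalize drawings freely between steps. By Proposition \ref{p: switch} there is a sequence of switches $G = G^{0}, G^{1}, \dots, G^{n} = G'$, and since a switch preserves both 2-edge-connectivity and planarity, every $G^{i}$ is again 2-edge-connected and planar. Arguing by induction on $n$, and using the fact that Lemma \ref{l: swap} relates any two plane drawings of a fixed connected planar graph, it suffices to prove the following: for a single switch $G \to H$ of 2-edge-connected planar graphs, some plane drawing of $G$ and some plane drawing of $H$ are related by flips, planar switches, swaps, and isotopies.

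Write the switch as an amalgamation of disjoint graphs $A$ and $B$ over a vertex pair $\{v,w\}$, with $H$ the regluing. Suppose first that both $A$ and $B$ are connected. Then each of them contains a $v$--$w$ path, so $A + vw$ and $B + vw$ are minors of the planar graph $G$ and hence planar; equivalently, $A$ admits a plane embedding with $v$ and $w$ on a common face, and likewise $B$. Placing such embeddings in the two closed disks cut off by a circle $S^{1} \subset S^{2}$ passing through marked points $v$ and $w$ (and otherwise disjoint from the drawings) yields a plane drawing $\Delta$ of $G$ with $S^{1} \cap \Delta = \{v,w\}$ and with $A$, $B$ on opposite sides; the planar switch along $S^{1}$---regluing the $B$-side by a homeomorphism that exchanges $v$ and $w$---then carries $\Delta$ to a plane drawing of $H$, which settles this case.

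In general at most one of $A, B$ can be disconnected, since $G$ is connected; say $B = B_{v} \sqcup B_{w}$ with $v \in B_{v}$, $w \in B_{w}$ and $A$ connected. Then $G$ is $A$ carrying a pendant connected subgraph $B_{v}$ at $v$ and $B_{w}$ at $w$, while $H$ is obtained by swapping the two attaching vertices; this switch is the composite of the two pendant-block moves that carry $B_{v}$ from $v$ to $w$ and $B_{w}$ from $w$ to $v$. It therefore remains to realize a single pendant-block move, for which I would first treat the elementary case: if a connected graph $C$ is attached to the connected graph $A$ at a single vertex $p$, and $q$ is a neighbour of $p$ in $A$, then some drawing of $A \cup_{p} C$ and some drawing of $A \cup_{q} C$ differ by one planar switch---draw $A$ with the edge $pq$ on its outer face, place $C$ in a small disk attached at $p$ inside that face, and perform the planar switch along a circle through $p$ and $q$ enclosing $C$ on one side, which moves the attaching vertex from $p$ to $q$ while leaving the embedding of $A$ untouched. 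Chaining such elementary moves along a $p$--$q$ path in the connected graph $A$, and renormalizing by Lemma \ref{l: swap} between consecutive steps so that the next edge of the path lies on the outer face with $C$ suitably placed, transports $C$ between any two vertices of $A$.

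The step I expect to be the main obstacle is this pendant-block move: one must check that the elementary slide across a single edge is genuinely a planar switch in the sense of the text, that it does not disturb the rest of the drawing, and that the successive slides can indeed be strung together along a path. By contrast, the both-connected case is comparatively formal once one records the minor argument establishing the planarity of $A + vw$ and $B + vw$.
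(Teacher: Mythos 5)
Your proposal is correct, and its skeleton is the same as the paper's: reduce to a single switch via Proposition \ref{p: switch}, realize that switch by planar switches, and use Lemma \ref{l: swap} to pass between arbitrary drawings of a fixed connected planar graph. The difference is in how a single abstract switch is realized. The paper simply asserts that the given drawing $\Gamma$ admits a planar switch producing a drawing of $G'$, and then invokes Lemma \ref{l: swap}; you instead first build an adapted drawing (both sides embedded in complementary disks meeting along a circle through $v,w$, justified by the minor argument for $A+vw$ and $B+vw$), and in the case where one side of the switch is disconnected you replace the single planar switch by a chain of elementary edge-slides of a pendant piece, renormalizing with Lemma \ref{l: swap} between slides. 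This extra care is not wasted: when one side of the switch is disconnected (a special switch, which does occur in the sequences produced by Proposition \ref{p: switch} for $2$-edge-connected graphs with cut vertices), the vertices $v$ and $w$ need not be cofacial in any drawing, so no single circle through $v,w$ meeting the drawing only in $\{v,w\}$ exists and no single planar switch can realize the move; and even in the both-connected case the bridges of the two sides may interleave in the given $\Gamma$, so one must first renormalize the drawing, as you do. Thus your argument fills in precisely the points the paper's ``clearly'' glosses over, at the cost of a longer sequence of moves, which is harmless since the Corollary allows arbitrary sequences. The only points worth recording explicitly are the ones you already flag implicitly: intermediate graphs in your chains (including those with a pendant piece attached at an interior vertex of the path) remain connected and planar, so Lemma \ref{l: swap} applies, and each elementary slide is literally a planar switch in the paper's sense because the guiding circle meets the drawing only in the two chosen vertices.
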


\begin{proof}

Suppose first that $G, G'$ are related by a single switch.  In this case, there clearly exists a planar switch of  $\Gamma$ that results in a plane drawing $\Gamma'_0$ of $G'$.  By Lemma \ref{l: swap}, there exists a sequence of flips, swaps, and isotopies that transforms $\Gamma'_0$ into $\Gamma'$.  The general case of the Corollary now follows from Proposition \ref{p: switch}.

\end{proof}


\subsection{Between diagrams and graphs.}\label{ss: mutation}

\begin{figure}
\centering
\includegraphics[width=1in]{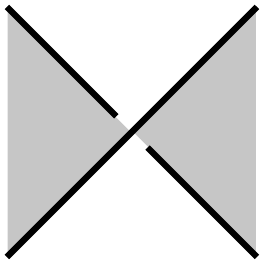}
\caption{Coloring convention for alternating diagrams.}  \label{f: coloringconvention}
\end{figure}

Let $D \subset S^2$ denote a connected, alternating diagram for a link $L$.  Color the regions of $D$ black and white in checkerboard fashion according to the coloring convention displayed in Figure \ref{f: coloringconvention}.  We obtain a planar graph by drawing a vertex in each black region and an edge for each crossing that joins a pair of black regions.  Examples appear in Figures \ref{f: mutation1}-\ref{f: mutation3}.  The result is the {\em Tait graph} $G$ of $D$, equipped with a natural (isotopy class of) plane drawing $\Gamma$.  This process is clearly reversible: given a connected plane drawing $\Gamma$, we obtain from it a connected, alternating link diagram $D$.

For concreteness, write $S^3 = \bR^3 \cup \{ \infty \}$, where $\bR^3$ has coordinates $x,y,z$, and let $S^2 \subset S^3$ denote the $xy$-coordinate plane together with $\infty$.  Suppose that the unit ball $B^3 \subset \bR^3$ meets $L$ in the four points $\{ (\pm 1/ \sqrt{2}, \pm 1 / \sqrt{2}, 0) \}$, and these are all regular points for $D$.  The sphere $\del B^3$ is a {\em Conway sphere} for $L$, and the circle $\del B^3 \cap S^2$ is a {\em Conway circle} for $D$.  More generally, a Conway circle for $D$ refers to any circle $S^1 \subset S^2$ meeting $D$ in four regular points; by a suitable isotopy, we can arrange that $S^1$ arises in the manner just described.  We operate on $B^3 \cap L$ by performing a $180^\circ$ rotation about one of the three coordinate axes.  The result is a link $L'$ and a corresponding diagram $D' \subset S^2$.  We say that the links $L, L'$ differ by a ({\em Conway}) {\em mutation}, and a pair of links are {\em mutants} if they differ by a sequence of isotopies and mutations.  We make similar definitions at the level of diagrams, requiring all isotopies to take place in $S^2$.  Thus, mutant diagrams present mutant links, but the converse does not hold in general.

\begin{lem}\label{l: graph mutation}

Let $D$ denote a connected, alternating diagram and $\Gamma$ the associated plane drawing of its Tait graph.  A mutation of $D$ effects one or two flips, a planar switch, or a swap in $\Gamma$.  Conversely, a flip, planar switch, or swap in $\Gamma$ effects a mutation of $D$.

\end{lem}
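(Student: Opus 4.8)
The plan is to translate the three-dimensional picture of a mutation into the two-dimensional picture on $S^2$ and then read off what happens to the Tait graph. First I would set up the dictionary: a Conway circle $S^1 \subset S^2$ for $D$ meets $D$ in four regular points, and these four points divide $S^1$ into four arcs. In the checkerboard coloring, consecutive arcs alternate between ``facing a black region'' and ``facing a white region,'' so two of the four arcs lie in black regions and two lie in white regions. The key case distinction is how $S^1$ sits with respect to the black regions: either $S^1$ passes through two distinct black regions, or through one black region twice (i.e.\ the two black arcs lie in the same black face). Accordingly, $S^1 \cap \Gamma$ is a set $\{v, w\}$ of vertices of $\Gamma$ (possibly $v = w$): $S^1$ crosses into a black region means it crosses the vertex drawn there. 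Thus every Conway circle for $D$ restricts to exactly the kind of circle that Proposition \ref{p: flip} / the discussion of flips, switches, and swaps is phrased in terms of—a circle meeting $\Gamma$ in a (multi-)set of two vertices—and conversely such a circle, perturbed to be disjoint from the crossings it doesn't pass through, is a Conway circle.

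Next I would analyze the three possible $180^\circ$ rotations axis by axis. Take the Conway sphere $\partial B^3$ with $B^3 \cap L$ the standard four points, so the Conway circle is $\partial B^3 \cap S^2$. Rotation about the $z$-axis (the axis perpendicular to the diagram plane) preserves $S^2$ and regluing by it is, on the level of the diagram, an orientation-preserving rotation of the inside disk; on $\Gamma$ this is exactly a flip (when $v \neq w$ and the two black arcs are in distinct regions) or a switch/swap (when the configuration is degenerate). Rotation about the $x$- or $y$-axis (an axis lying in $S^2$) does not preserve $S^2$, but it sends $S^2$ to itself set-theoretically after the regluing and what one sees on the diagram is the inside disk reglued by an orientation-reversing homeomorphism of $S^2$ fixing the two marked points on $S^1$; tracking which two of the four boundary points are fixed, this is either a flip composed with an orientation-reversing move, hence a negative planar switch, or—when $v$ and $w$ coincide—a swap, or it requires two flips to realize (the ``one or two flips'' clause) depending on whether the two fixed points of the rotation lie on the same black/white arc pair. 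The bookkeeping here is finite: there are three axes and, for each, a small number of configurations of $\{v,w\}$ relative to the coloring, and in each cell of this table one checks which of flip, two flips, planar switch, swap results. The converse direction is then immediate: given a flip, planar switch, or swap of $\Gamma$ along a circle through $\{v,w\}$, push the circle off the nearby crossings to make it a Conway circle, and the corresponding reglueing homeomorphism of the disk is realized by one of the three coordinate rotations, effecting a mutation of $D$.

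The main obstacle is purely the case analysis: correctly matching each of the three rotation axes, together with each configuration of the Conway circle relative to the checkerboard coloring (does it separate the two black regions it meets, are those two black arcs in the same or different black faces, are the two fixed points of the rotation on ``black'' arcs or ``white'' arcs), to the right one of \{flip, two flips, planar switch, swap\}. In particular one must be careful that a rotation about an in-plane axis can fail to carry $S^2$ to $S^2$ on the nose, so that the induced operation on $D$ is not literally one of the moves on $\Gamma$ but is \emph{isotopic in $S^2$} to such a move—this is where the ``one or two flips'' flexibility is needed, since composing with a flip can fix up the orientation. Once the table is drawn up, each entry is a routine check using the explicit coordinates for $B^3 \cap L$ and the rotations, so I would organize the proof as: (1) the circle/vertex dictionary; (2) the $z$-axis rotation gives a flip (degenerating to planar switch or swap); (3) the $x$- and $y$-axis rotations give a planar switch, a swap, or a pair of flips; (4) the converse by reversing these identifications.
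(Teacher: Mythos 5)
Your overall strategy is the same as the paper's: set up the dictionary between Conway circles for $D$ and circles meeting $\Gamma$ in one or two vertices, split into the cases where the circle meets two distinct black regions or one black region twice, analyze the three coordinate rotations case by case, and then reverse the dictionary for the converse. But the entries of your table are wrong in a way that reflects a genuine misreading of the geometry, not just deferred bookkeeping. With the normalization that $(\pm 1,0,0)$ lie in black regions, the rotation $\rho_z$ \emph{exchanges} the two vertices $v,w$ and acts orientation-preservingly on $S^2$, so it is a positive planar switch, not a flip; the flip is $\rho_x$, which fixes both vertices and acts on the plane as an orientation-reversing reflection; and $\rho_y$, which exchanges $v,w$ and reverses orientation, is the negative switch. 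In particular your claim that the in-plane rotations ``do not preserve $S^2$'' is false --- a $180^\circ$ rotation about an axis lying in $S^2$ restricts to a reflection of $S^2$ --- and the ``one or two flips'' clause is not an orientation fix-up: two flips arise precisely in the degenerate case $v=w$ under $\rho_y$ (the other two rotations there giving the positive and negative swaps).

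The more serious gap is in your converse, which you call ``immediate.'' A flip is allowed along a circle meeting $\Gamma$ in a \emph{single} cut-vertex $v$, and such a circle meets $D$ in only two points; it is not a Conway circle, and no perturbation ``pushing it off nearby crossings'' makes it one, since the count of intersection points with $D$ is two, not four. The paper handles this case by a dedicated isotopy of $D$ that introduces two additional intersection points with the circle, after which a rotation about the $x$-axis followed by an isotopy realizes the flip as a mutation. Without this step (or some substitute), the converse direction of the Lemma is not established for flips at cut-vertices, and these are needed later (e.g.\ in Lemma \ref{l: swap} and Corollary \ref{c: flip and switch}) when the Tait graph is 2-edge-connected but not 2-connected.
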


\begin{proof}

\begin{figure}
\centering
\includegraphics[width=6in]{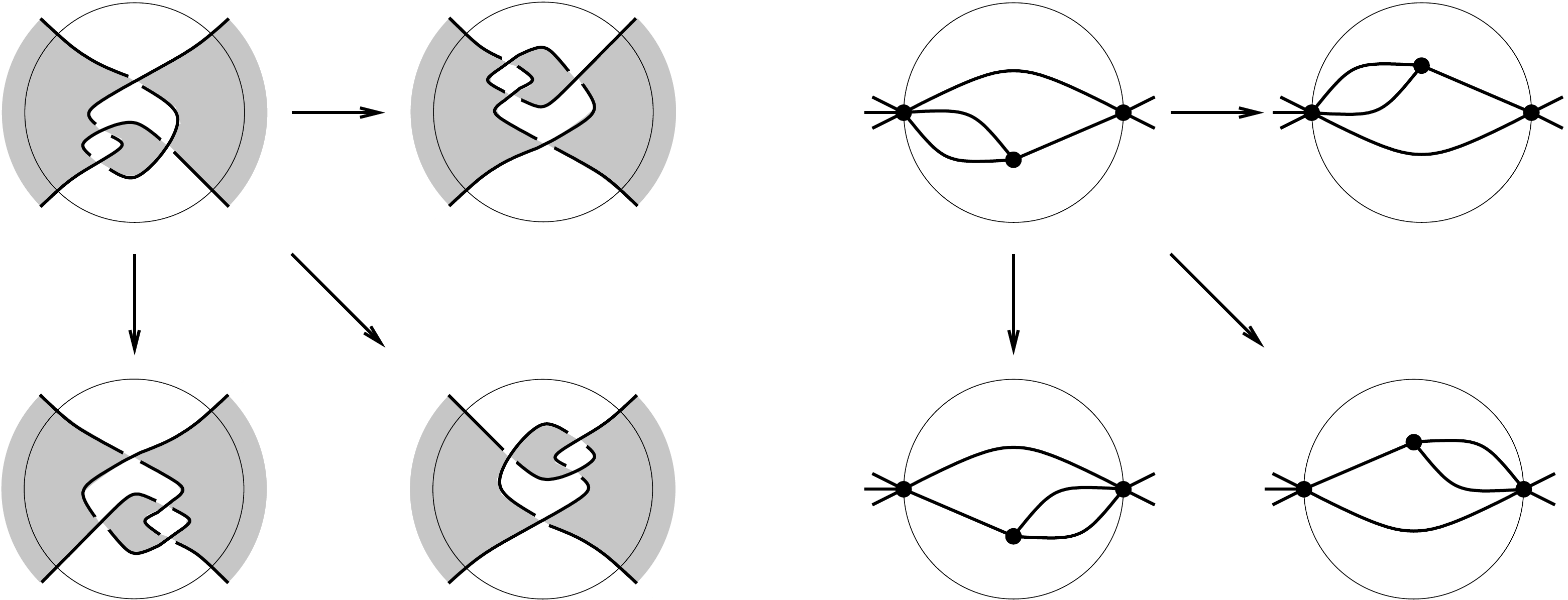}
\put(-347,140){$\rho_x$}
\put(-410,82){$\rho_y$}
\put(-338,88){$\rho_z$}
\put(-105,140){\small flip}
\put(-96,88){\small $+$ switch}
\put(-200,82){\small $-$ switch}
\caption{Between mutations and graph operations (1).}  \label{f: mutation1}
\end{figure}

Draw $D$ and $\Gamma$ simultaneously in $S^2$, and choose a Conway circle for $D$ with respect to which to mutate.  Choose coordinates so that the Conway circle arises in the concrete manner described above, and let $D^2_0$, $D^2_\infty \subset S^2$ denote the two disks that it bounds.  By rotating the diagram $90^\circ$ if necessary, we may assume that the points $(\pm1,0,0)$ lie in black regions of $D$.

For the forward implication, we distinguish two cases, depending on whether or not $(-1,0,0)$ belongs to the same black region as $(1,0,0)$ or not.  Suppose first that it belongs to a different black region.  By an isotopy of  $\Gamma$, we may assume that the points $(\pm1,0,0)$ represent distinct vertices $v,w \in V(\Gamma)$.  Let $\Gamma_1 \subset \Gamma$ denote the subgraph induced on the regions of $D^2_0$  and $\Gamma_2 \subset \Gamma$ the subgraph induced on the regions of $D^2_\infty$.  By inspection, rotation of the unit disk about the $x$-axis corresponds to a flip of $\Gamma_1 \subset \Gamma$; rotation about the $y$-axis corresponds to a negative planar switch; and rotation about the $z$-axis corresponds to a positive planar switch (Figure \ref{f: mutation1}).  This establishes the forward implication of the Lemma in this case.

Suppose instead that $(-1,0,0)$ belongs to the same black region as $(1,0,0)$.  By an isotopy of $D$, and possibly a change of coordinates that exchanges $D^2_0$ and $D^2_\infty$, we may assume that the interval $\{ (t,0,0), -1 \leq t \leq 1\} $ is supported in this black region.  Thus, the diagram meets the unit disk in a split pair of (possibly knotted) strands.  By an isotopy of $\Gamma$, we may assume that $(0,0,0)$ represents a vertex $v \in V(\Gamma)$.  Let $\Gamma_1 \subset \Gamma$ denote the subgraph induced on the regions of $D^2_0 \cap \{y \geq 0\}$ and $\Gamma_2 \subset \Gamma$ the subgraph induced on the regions of $D^2_0 \cap \{y \leq 0\}$.  By inspection, rotation of the unit disk about the $x$-axis corresponds to a negative swap of $\Gamma_1, \Gamma_2 \subset \Gamma$; rotation about the $y$-axis corresponds to flipping both $\Gamma_1$ and $\Gamma_2$; and rotation about the $z$-axis corresponds to positive swap (Figure \ref{f: mutation2}).  This establishes the forward implication of the Lemma in this case.

\begin{figure}
\centering
\includegraphics[width=6in]{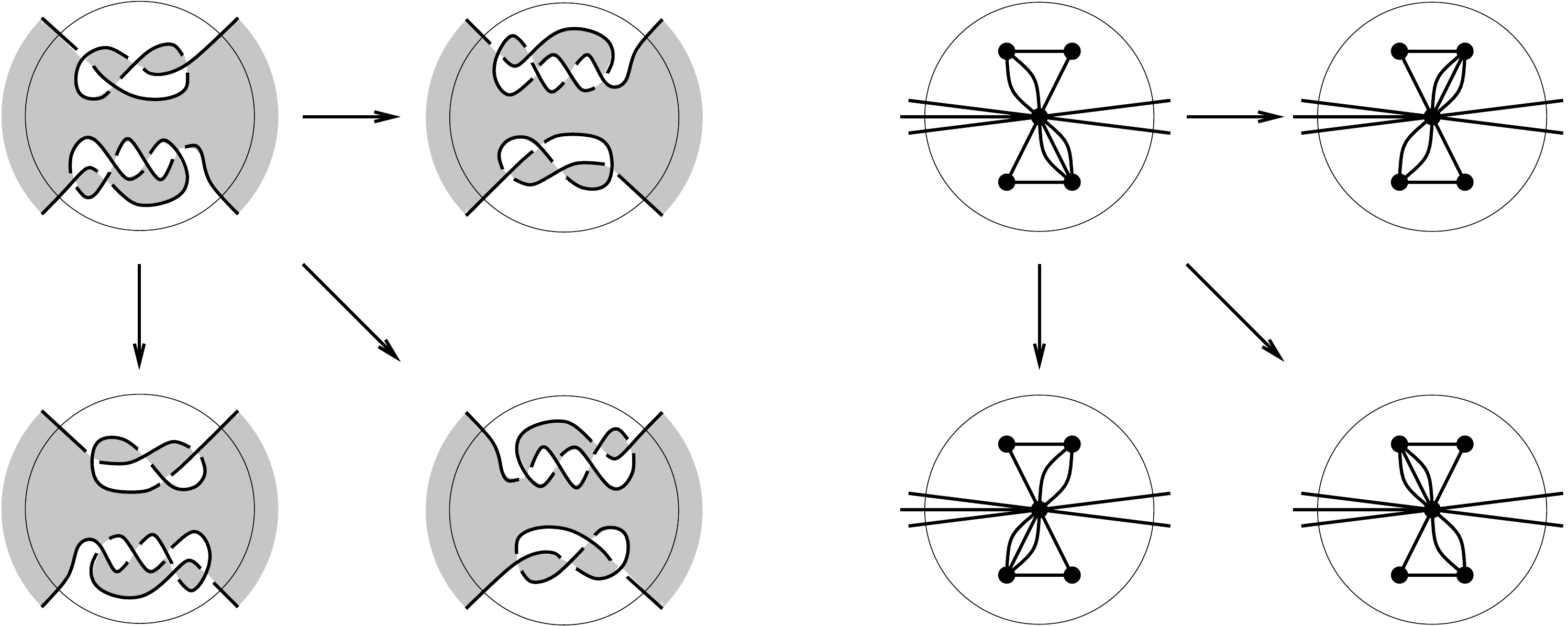}
\put(-343,147){$\rho_x$}
\put(-410,87){$\rho_y$}
\put(-333,90){$\rho_z$}
\put(-108,157){\small $-$ swap}
\put(-91,91){\small $+$ swap}
\put(-169,90){\small two}
\put(-170,80){\small flips}
\caption{Between mutations and graph operations (2).}  \label{f: mutation2}
\end{figure}

For the reverse direction, we distinguish several cases as well.  First consider the case of a flip or a planar switch (positive or negative) that involves a pair of distinct vertices $v,w \in V(\Gamma)$.  In each case, there exists a circle $S^1 \subset S^2$ such that $S^1 \cap \Gamma = \{ v,w \}$, and it is a Conway circle for $D$.  Just as in the first case of the forward implication considered above, a flip and the two types of planar switch correspond to three types of Conway mutation with respect to this circle.

Next consider the case of a swap involving a vertex $v$.  In this case, the boundary of a regular neighborhood of the disks $D^2_1, D^2_2$ involved is a Conway circle for $D$.  By an isotopy we may arrange so that the disk $D^2_0$ bounded by this circle is the unit disk and $\{ (t,0,0), -1 \leq t \leq 1\} $ is supported in the black region that contains $v$.  Now a positive swap corresponds to rotating about the $z$-axis, while a negative swap corresponds to rotating about the $x$-axis.  In either case we obtain a mutation.

Lastly, consider the case of a flip involving a single vertex $v$.  In this case, the circle $S^1 \subset S^2$ involved in the flip meets $D$ in a pair of points.  Apply an isotopy of $D$ as in Figure \ref{f: mutation3} to introduce two new intersection points with $S^1$.  A rotation about the $x$-axis followed by an isotopy effects a rotation in the $y$-axis in the original disk, and this corresponds to the flip.  Thus we obtain a mutation in this case as well.

\begin{figure}
\centering
\includegraphics[width=4in]{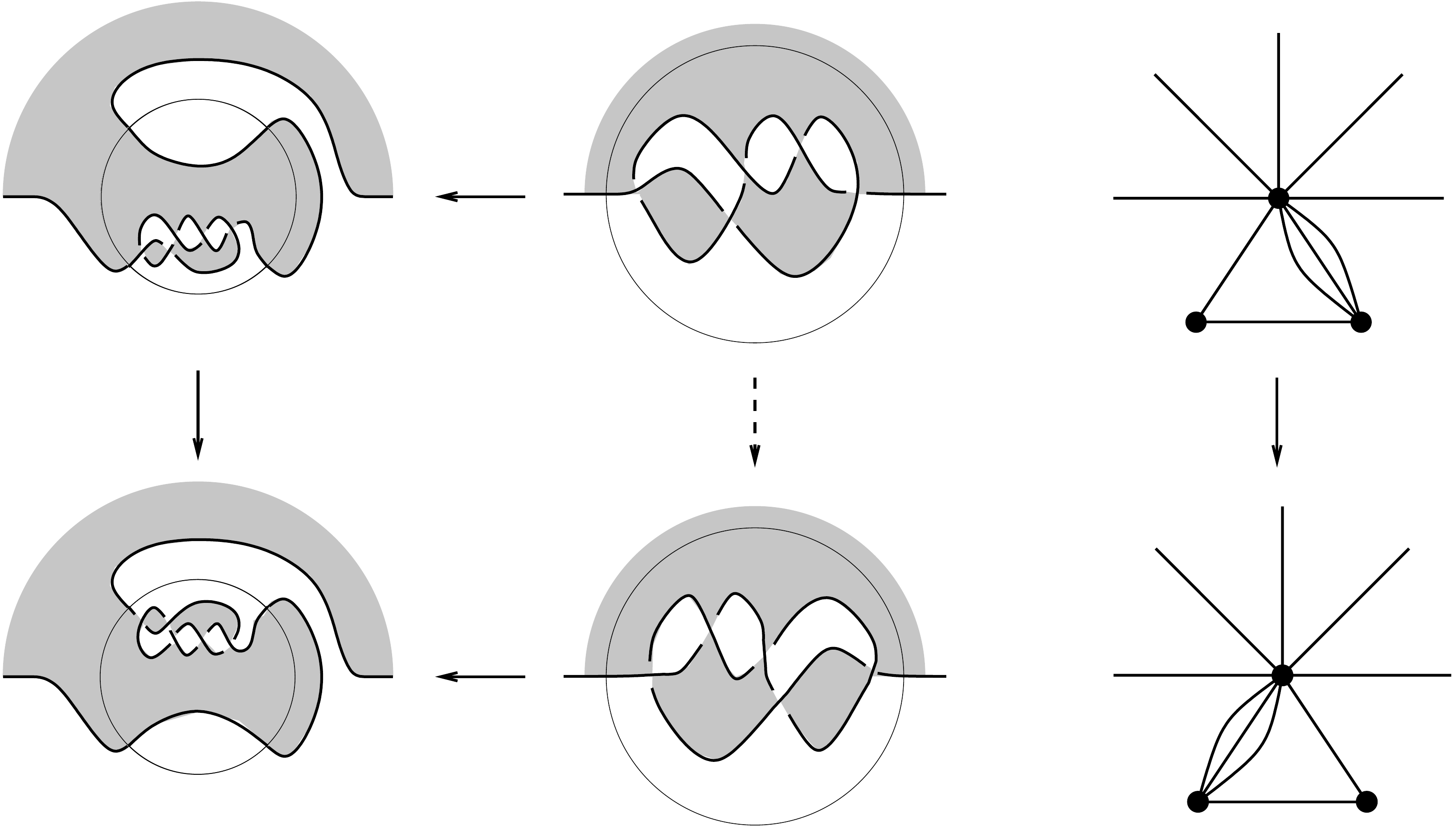}
\put(-197,132){$\sim$}
\put(-263,82){$\rho_x$}
\put(-197,35){$\sim$}
\put(-30,78){\small flip}
\caption{Between mutations and graph operations (3).}  \label{f: mutation3}
\end{figure}

\end{proof}

\begin{cor}[cf. \cite{ck:mutation}, $\S$3, Prop.1]\label{c: mutant graphs}

A pair of connected, reduced alternating diagrams are mutants iff their Tait graphs are 2-isomorphic.

\end{cor}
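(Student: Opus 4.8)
The plan is to thread together the dictionary of Lemma \ref{l: graph mutation} with Corollary \ref{c: flip and switch}, using that the passage from a connected plane drawing to the corresponding connected alternating diagram (\S\ref{ss: mutation}) is a bijection up to isotopy in $S^2$. I would first record the standard observation that an alternating diagram is reduced exactly when its Tait graph is 2-edge-connected --- a nugatory crossing is precisely one whose edge in the Tait graph is a cut-edge --- so that the Tait graphs $G, G'$ of the reduced diagrams $D, D'$ fall within the scope of Corollary \ref{c: flip and switch}.

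For the forward implication I would fix a sequence $D = D_0, D_1, \dots, D_k = D'$ of connected alternating diagrams whose consecutive terms differ by a single mutation or an isotopy of $S^2$ (the intermediate terms need not be reduced) and track the Tait graphs $G_j$. An isotopy carries $G_{j-1}$ to an isomorphic graph; and by Lemma \ref{l: graph mutation} a mutation effects on the plane-drawn Tait graph one or two flips, a planar switch, or a swap. Flips and swaps do not change the underlying graph, and a planar switch changes it by a switch, which preserves the 2-isomorphism type (as remarked before Theorem \ref{t: switch}). So each step preserves the 2-isomorphism type, and $G$ and $G'$ come out 2-isomorphic.

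For the converse I would start from a 2-isomorphism between $G$ and $G'$; since these are 2-edge-connected planar graphs carrying the plane drawings $\Gamma, \Gamma'$ coming from $D, D'$, Corollary \ref{c: flip and switch} produces a finite sequence of flips, planar switches, swaps, and isotopies of $S^2$ relating $\Gamma$ to $\Gamma'$. I would then replay this sequence one operation at a time at the level of diagrams: by the reverse half of Lemma \ref{l: graph mutation}, each flip, planar switch, or swap on the current plane-drawn Tait graph is realized by a mutation of the current diagram, and each isotopy of the drawing is realized by an isotopy of the diagram. Because the drawing-to-diagram correspondence is a bijection up to isotopy, the diagram attached to the terminal drawing $\Gamma'$ is $D'$, so $D$ and $D'$ are mutants.

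I do not anticipate a serious obstacle: the two nontrivial inputs --- Corollary \ref{c: flip and switch} and Lemma \ref{l: graph mutation} --- are already in hand, and what remains is bookkeeping. The two points needing care are that intermediate diagrams in a mutation sequence may fail to be reduced, which forces one to use the general notion of 2-isomorphism for graphs with cut-edges in the forward direction (harmless, since flips, swaps, and switches all respect it), and that the black/white checkerboard colouring must be followed through the orientation-reversing rotations, which is already built into Lemma \ref{l: graph mutation}.
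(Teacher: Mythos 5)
Your proposal is correct and follows the same route as the paper: the paper's proof is precisely the combination of Corollary \ref{c: flip and switch} and Lemma \ref{l: graph mutation}, together with the observation that a connected diagram is reduced iff its Tait graph is 2-edge-connected. You have simply written out the bookkeeping (tracking Tait graphs through a mutation sequence in one direction, replaying flips, planar switches, and swaps as mutations in the other) that the paper leaves implicit.
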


\begin{proof}

This follows at once from Corollary \ref{c: flip and switch} and Lemma \ref{l: graph mutation}, noting that a diagram is connected and reduced iff its Tait graph is 2-edge-connected.

\end{proof}


\subsection{Two-bridge links.}

We apply Corollary \ref{c: mutant graphs} to derive the result concerning two-bridge links quoted in $\S$\ref{ss: repercussions}.

\begin{prop}\label{p: 2bridge}

A pair of 2-bridge link diagrams in standard position are mutants iff they coincide up to isotopy and reversal \textup{(}i.e. an orientation-reversing homeomorphism of $S^2$\textup{)}.

\end{prop}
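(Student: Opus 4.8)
The plan is to translate the statement into the language of Tait graphs and then apply Corollary~\ref{c: mutant graphs}. Recall that a 2-bridge link admits a standard alternating diagram, and that the Tait graph of such a diagram (for a suitable checkerboard coloring) is a cycle of ``blocks of parallel edges'' -- more precisely, for a continued fraction expansion $[a_1,a_2,\dots,a_k]$ with all $a_i>0$ the Tait graph $G$ is obtained from a cycle on $k$ vertices by replacing alternate edges by bundles of $a_i$ parallel edges, so that $G$ is 2-edge-connected and planar. Thus by Corollary~\ref{c: mutant graphs} a pair of 2-bridge diagrams in standard position are mutants if and only if their Tait graphs $G, G'$ are 2-isomorphic. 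So the heart of the Proposition is the combinatorial assertion: \emph{two such ``necklace'' graphs are 2-isomorphic if and only if the associated diagrams coincide up to isotopy and reversal of $S^2$.}

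First I would set up notation: describe precisely the family of graphs arising as Tait graphs of standard 2-bridge diagrams, recording the cyclic sequence of edge-multiplicities $(m_1,\dots,m_\ell)$ read around the necklace (here consecutive ``parallel classes'' and ``series classes'' of the necklace alternate, and one may encode everything by the cyclic word of multiplicities of the maximal sets of parallel edges). Then I would identify the invariants of such a graph under 2-isomorphism. The key structural fact is that in a necklace graph the partition of $E(G)$ into maximal classes of parallel edges is intrinsic (two edges are parallel iff together they bound a cycle of length $2$, a cycle-preserving notion), and collapsing each parallel class to a single edge yields an honest cycle $C_\ell$; the multiplicities are then attached cyclically to the edges of $C_\ell$. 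A 2-isomorphism must carry parallel classes to parallel classes and hence induces an automorphism of $C_\ell$, i.e. a cyclic rotation or reflection of the multiplicity word. Conversely any such rotation/reflection is realized by a 2-isomorphism (indeed by switches, via Proposition~\ref{p: switch}, or just directly). So $G\cong_2 G'$ iff the cyclic multiplicity words agree up to rotation and reflection.

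The remaining step is the dictionary between cyclic multiplicity words up to rotation/reflection and 2-bridge diagrams in standard position up to isotopy and reversal. Here I would invoke the standard correspondence: the cyclic word $(m_1,\dots,m_\ell)$ read off the necklace is exactly (a symmetric form of) the continued-fraction data of the 2-bridge link, and the ambiguity in this data -- which fraction $p/q$ it represents -- is precisely $q \mapsto q$ versus $q \mapsto q^{-1} \pmod p$ together with reversal of the expansion, which is exactly what rotation and reflection of the cyclic word produce. An orientation-reversing homeomorphism of $S^2$ carrying one standard diagram to another reverses the cyclic order of the necklace, i.e. reflects the word, while an isotopy of $S^2$ can at most rotate it (since the standard position fixes the diagram up to the symmetries of the necklace). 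Assembling these equivalences: $D$ and $D'$ are mutants $\iff G\cong_2 G' \iff$ the cyclic words agree up to rotation/reflection $\iff$ $D, D'$ coincide up to isotopy and reversal. I would write this out carefully, perhaps with a small figure of a necklace graph.

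The main obstacle I anticipate is \textbf{not} the 2-isomorphism analysis of necklace graphs (which is elementary) but rather pinning down the ``standard position'' conventions precisely enough that ``coincide up to isotopy and reversal'' is an honest statement about diagrams in $S^2$ and matches the rotation/reflection ambiguity of the necklace word exactly -- in particular making sure no further diagrammatic isotopies (e.g. ones that would permute the $m_i$ nontrivially, or flype-type moves) sneak in, and handling the low-complexity edge cases ($\ell$ small, some $m_i=1$ so that ``maximal parallel class'' is a single edge and the intrinsic partition needs the right definition, and the unknot/Hopf link boundary cases). Once the conventions are fixed, each implication is a routine check.
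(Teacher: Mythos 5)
Your overall scaffolding matches the paper's: reduce via Corollary~\ref{c: mutant graphs} to the statement that the Tait graphs are 2-isomorphic, then show this forces the two standard diagrams to agree up to isotopy and reversal. But there is a genuine gap at the heart of your argument: the structural claim that the Tait graph of a standard 2-bridge diagram is a ``necklace'' (a cycle with some edges replaced by bundles of parallel edges) is false in general. The correct structure --- and the one the paper takes as the definition of standard position --- is a graph with a Hamiltonian cycle $H$ together with extra edges \emph{all incident to one hub vertex} $v_0$ and drawn in a single face of $S^2 - H$. For a continued fraction such as $[2,3,2]$ the Tait graph is a 4-cycle plus a chord ($K_4$ minus an edge), which has no parallel edges at all; collapsing maximal parallel classes does not produce a cycle $C_\ell$, so there is no cyclic multiplicity word and no induced rotation/reflection to speak of. (Conversely, a necklace with three or more bundles of size at least 2 is the Tait graph of a pretzel-type diagram, not a 2-bridge one, so your model family is neither contained in nor contains the relevant family.) Because of this, the central combinatorial step of your proposal --- ``2-isomorphic necklaces iff cyclic words agree up to rotation/reflection'' --- does not prove the Proposition. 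The paper's proof instead analyzes how arbitrary cycles of $G$ intersect $E(H)$ and the bundles $E_i$ of hub edges: it shows a 2-isomorphism must carry the Hamiltonian cycle to the Hamiltonian cycle and the ordered data $(E_i),(F_i)$ to the corresponding data in possibly reversed order, whence $G \cong G'$; note also the paper's caveat that the resulting graph isomorphism need not induce the given 2-isomorphism, a subtlety your dihedral-symmetry picture glosses over.

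A secondary issue: your final dictionary step invokes the continued-fraction ambiguity $q \mapsto q^{\pm 1} \pmod p$, i.e.\ essentially the classification of 2-bridge links. This is the very statement the paper wants to \emph{re-derive} from Theorem~\ref{t: main} together with Proposition~\ref{p: 2bridge} (see the discussion of repercussions), so leaning on it risks circularity and is in any case unnecessary: once one knows $G \cong G'$, the standard-position constraints already determine the plane drawing, hence the diagram, up to isotopy and reversal, which is how the paper concludes. Your instinct that the delicate point is pinning down the ``standard position'' conventions was right, but the fix is to adopt the hub-and-Hamiltonian-cycle model and redo the 2-isomorphism analysis there, not to refine the necklace picture.
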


\begin{proof}

First observe that the Proposition is obvious in the case that one of the diagrams is the standard diagram for the unknot or two-component unlink.  Excluding these cases, let $D$ denote a 2-bridge link diagram in standard position.  This means that $D$ is connected, reduced, and alternating; its Tait graph $G$ contains a Hamiltonian cycle $H$; every edge in $E(G) - E(H)$ is incident with a fixed vertex $v_0 \in V(G)$; and in the plane drawing $\Gamma$ of $G$, the interiors of all these edges lie in one fixed region of $S^2 - H$.  Note that these conditions specify the image of $\Gamma$ up to isotopy and reversal.

Let $e,f \in E(H)$ denote the edges incident with $v_0$, and let $v_1,\dots,v_k$ denote the neighbors of $v_0$ in $G - \{e,f\}$, chosen with respect to some cyclic order on $H$ (thus, $k=0$ iff $G = H$).  Let $E_i$ denote the set of edges between $v_0$ and $v_i$ in $G - \{e,f\}$, for $i=0,\dots,k+1$, and let $F_i$ denote the edge set of the path directed from $v_i$ to $v_{i+1}$ along $H$, for $i=0,\dots,k$.  Here we take $v_{k+1} = v_0$, so $E_0 = E_{k+1} = \varnothing$, and $F_0 = E(H)$ iff $k=0$.

Now suppose that $D'$ is another 2-bridge link diagram in standard position, and it is a mutant of $D$.  By Corollary \ref{c: mutant graphs},  there exists a 2-isomorphism $\varphi: E(G) \to E(G')$, where $G'$ denotes the Tait graph of $D'$.  We argue that, in fact, $G \cong G'$.

Since $G'$ is 2-edge-connected, it follows that $\varphi(E(H))$ is the edge set of a Hamiltonian cycle $H'$ in $G'$.  Define $v'_0, \dots, v'_{k'+1}$, $E'_0,\dots, E'_{k'+1}$, $F'_0,\dots,F'_{k'}$ with respect to $G'$ as above.  Observe that the edge sets $\varnothing$ and $\bigcup_{t=i}^j F_t$, $i \leq j$, are the intersections between cycles of $G$ with $E(H)$.  Also, any cycle that meets $H$ in $F_i$ uses an edge from $E_i$ and an edge from $E_{i+1}$.  The same applies to $G'$, mutatis mutandis.  It follows that $\varphi$ carries $F_0,\dots,F_k$ to $F'_0, \dots, F'_{k'}$ in (possibly reverse) order, and $E_0,\dots,E_k$ to $E'_0, \dots, E'_{k'}$ in the same order.   In particular, $k = k'$.  Since the $E$'s are sets of parallel edges and the $F$'s are edge sets of paths, it follows that $G \cong G'$.  (Note, however, that this isomorphism does not necessarily induce $\varphi$.)

Since $G \cong G'$ and the images of $\Gamma, \Gamma'$ are unique up to isotopy and reversal, it follows that $D$ and $D'$ coincide up to isotopy and reversal as well.

\end{proof}


\subsection{Heegaard Floer homology.}\label{ss: HF}

We recall here the necessary input from Heegaard Floer homology.  We work with the simplest version of this theory, namely the invariant $\widehat{HF}(Y)$, defined over the field $\bF_2$, for a rational homology sphere $Y$.  The invariant takes the form of a finite-dimensional vector space over $\bF_2$, graded by spin$^c$ structures on $Y$ and rational numbers:
\[ \widehat{HF}(Y) = \bigoplus_{\s \in \Spc(Y)} \widehat{HF}(Y,\s), \quad \widehat{HF}(Y,\s) = \bigoplus_{d \in \Q} \widehat{HF}_d(Y,\s). \]
$\Spc$ structures on $Y$ form a torsor over the group $H^2(Y;\Z)$.  We may therefore regard the invariant as a pair $(\Spc(Y),\widehat{HF}(Y,\cdot))$, where $\widehat{HF}(Y,\cdot)$ takes values in the set of finitely generated, rationally graded vector spaces over $\bF_2$.  From this point of view, a pair of rational homology spheres $Y_1,Y_2$ have isomorphic Heegaard Floer homology groups if there exists an isomorphism $(\Spc(Y_1), \widehat{HF}(Y_1,\cdot)) \overset{\sim}{\to} (\Spc(Y_2), \widehat{HF}(Y_2,\cdot))$.

The invariant has a fundamental non-vanishing property: $\widehat{HF}(Y,\s) \ne 0$, $\forall \, \s \in \Spc(Y)$.  Furthermore, there exists a distinguished grading, the {\em correction term} $d(Y,\s) \in \Q$, with the property that $\widehat{HF}_{d(Y,\s)} (Y,\s) \ne 0$.  By definition, the pair $(\Spc(Y),d(Y,\cdot))$ is the {\em $d$-invariant} of $Y$.  The space $Y$ an {\em L-space} if $\rk \, \widehat{HF}(Y,\s) = 1$, $\forall \, \s \in \Spc(Y)$.  In this case, $\widehat{HF}(Y,\s)$ is supported in the single grading $d(Y,\s)$.  Thus, for an $L$-space $Y$, the isomorphism type of its Heegaard Floer homology groups determines and is determined by that of its $d$-invariant.

\begin{thm}[Ozsv\'ath-Szab\'o \cite{os:doublecover}, Thm.3.4]\label{t: osz}

Let $L$ denote a non-split alternating link and $G$ the Tait graph of an alternating diagram of $L$.  The branched double-cover $\Sigma(L)$ is an L-space, and the {\em (}Heegaard Floer{\em )} $d$-invariant of $\Sigma(L)$ is isomorphic to {\em minus} the {\em (}lattice theoretic{\em )} $d$-invariant of $\cF(G)$. \qed

\end{thm}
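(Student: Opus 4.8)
The plan is to realize $\Sigma(L)$ as the boundary of an explicit negative-definite four-manifold built from the diagram, and then to pin down its correction terms by trapping them between the Ozsv\'ath--Szab\'o correction-term inequality applied to that four-manifold and to a second one bounding $-\Sigma(L)$. We may assume the alternating diagram $D$ is reduced, since deleting a nugatory crossing changes neither $L$, nor $\Sigma(L)$, nor $\cF(G)$ (it deletes a cut-edge from the Tait graph, which preserves the flow lattice by Proposition~\ref{p: main prep}), and $L$ non-split means $D$ is connected. Now push the interior of the checkerboard surface whose associated graph is $G$ into $B^4$ and let $X$ be the double branched cover of $B^4$ along it; then $\partial X\cong\Sigma(L)$, a handle decomposition with one $0$-handle and $b_2(X)$ two-handles (this uses that $D$ is connected) shows $X$ is simply connected, and the intersection form of $X$ is the Goeritz form of $D$. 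The decisive consequence of alternatingness, built into the coloring convention of Figure~\ref{f: coloringconvention}, is that this form is \emph{definite}; with these conventions $X$ is negative definite, so $\cF(G)\cong(\Z^r,-Q_X)$ for the intersection form $Q_X$ and $r=b_2(X)$. Moreover the Goeritz presentation identifies $H_1(\Sigma(L);\Z)$ with $\overline{\cF(G)}$ and the linking form with $b_{\cF(G)}$, which fixes the torsor isomorphism $\Spc(\Sigma(L))\overset{\sim}{\leftrightarrow}C(\cF(G))$ in the statement: $\spc$ structures on $X$ restrict to $\Sigma(L)$ and correspond to characteristic covectors of $\cF(G)$ modulo $2\cF(G)$. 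Applying the same construction to the mirror diagram $\overline D$ (reduced alternating, with Tait graph the planar dual $G^{*}$ since mirroring interchanges the two checkerboard colorings, so that $\cF(G^{*})\cong\cC(G)$), and using $\Sigma(\overline L)=-\Sigma(L)$, produces a simply connected negative-definite four-manifold $X'$ with $\partial X'\cong-\Sigma(L)$ and $\cC(G)\cong(\Z^{r'},-Q_{X'})$.

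Next I would show that $\Sigma(L)$ is an $L$-space by induction on the crossing number, using the unoriented skein exact triangle for $\widehat{HF}$ of branched double-covers from \cite{os:doublecover}. The two resolutions of a crossing of $D$ are alternating diagrams $D_0,D_\infty$ of links $L_0,L_\infty$ with Tait graphs $G/e$ and $G\setminus e$; discarding nugatory crossings (which do not affect the double cover) and split unknot summands (across which $\Sigma$ is a connected sum, so $\widehat{HF}$ is a tensor product) leaves diagrams of fewer crossings to which induction applies, and $\det L_0+\det L_\infty=\det L$ is exactly the deletion--contraction recursion for the number of spanning trees. The exact triangle then gives $\rk\widehat{HF}(\Sigma(L))\le\rk\widehat{HF}(\Sigma(L_0))+\rk\widehat{HF}(\Sigma(L_\infty))=\det L$. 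On the other hand $\rk\widehat{HF}(Y)\ge|H_1(Y;\Z)|$ for every rational homology sphere (at least one generator in each of the $|H^2(Y;\Z)|$ $\spc$ structures, by the non-vanishing of $\widehat{HF}$), and $|H_1(\Sigma(L);\Z)|=\det L$, so equality holds: $\Sigma(L)$ is an $L$-space, and in particular its Heegaard Floer homology is recorded entirely by its $d$-invariant.

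For the $d$-invariant I would apply the correction-term inequality of \cite{os:absgr} to the negative-definite $X$: for every $\spc$ structure on $X$ restricting to a given $\s\in\Spc(\Sigma(L))$ one has $c_1^2+b_2(X)\le 4\,d(\Sigma(L),\s)$. Maximizing over such $\spc$ structures and rewriting in lattice terms, where $c_1^2=-|\chi|_{\cF(G)}$ for the corresponding characteristic covector $\chi$ and $b_2(X)=r=\rk\cF(G)$, this reads $d(\Sigma(L),\s)\ge-d_{\cF(G)}([\chi_\s])$. The same inequality applied to $X'$ gives $d(-\Sigma(L),\overline\s)\ge-d_{\cC(G)}([\chi'_{\overline\s}])$, i.e. $d(\Sigma(L),\s)\le d_{\cC(G)}([\chi'_{\overline\s}])$. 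Now Corollary~\ref{c: cut and cycle} furnishes the natural isomorphism $(C(\cC(G)),d_\cC)\overset{\sim}{\to}(C(\cF(G)),-d_\cF)$ arising from the unimodular splitting $\cC(G)\oplus\cF(G)\subset C_1(G;\Z)\cong\Z^n$, and this is the same splitting underlying the identification of $\Spc(\Sigma(L))$ with $\Spc(-\Sigma(L))$, so $d_{\cC(G)}([\chi'_{\overline\s}])=-d_{\cF(G)}([\chi_\s])$. Hence $-d_{\cF(G)}([\chi_\s])\le d(\Sigma(L),\s)\le-d_{\cF(G)}([\chi_\s])$, forcing equality for every $\s$; since all the identifications respect the torsor structures, the $d$-invariant of $\Sigma(L)$ is isomorphic to $-d_{\cF(G)}$, which is the assertion.

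The hard part will be the compatibility of the various torsor identifications: that the topological identification of $\Spc(\Sigma(L))$ with the $\overline{\cF(G)}$-torsor $C(\cF(G))$ coming from $X$, the analogous one for $X'$ and $\cC(G)$, and the lattice isomorphism of Corollary~\ref{c: cut and cycle}, all literally agree, so that the sandwich pins down the full torsor map and not merely the numerical values of $d$. Unwinding this amounts to tracking the inclusion $\cC(G)\oplus\cF(G)\subset C_1(G;\Z)$ against the closed four-manifold obtained by gluing the two checkerboard branched covers along $\Sigma(L)$, and the sign conventions (the $\pm$ of Figure~\ref{f: coloringconvention}, which is precisely what makes both checkerboard forms definite --- one a Gram matrix of $\cF(G)$, the other of $\cC(G)$ --- and makes the mirror's Tait graph equal to $G^{*}$) are where the care is needed. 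The remaining ingredients, the branched-double-cover skein exact triangle and the negative-definite correction-term inequality, I would import wholesale from \cite{os:doublecover} and \cite{os:absgr}.
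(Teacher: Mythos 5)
There is nothing in the paper to compare your argument against: Theorem \ref{t: osz} is quoted from Ozsv\'ath--Szab\'o \cite{os:doublecover} with a \qed\ and is never proved here --- it is one of the three imported ``black box'' results (alongside Elkies's and Whitney's theorems). What you have written is essentially a reconstruction of the Ozsv\'ath--Szab\'o argument itself: branched double covers of $B^4$ along the two pushed-in checkerboard surfaces give definite fillings of $\pm\Sigma(L)$ whose intersection forms are the Goeritz forms (hence Gram matrices for $\cF(G)$ and $\cC(G)$), the skein exact triangle plus $\det L_0+\det L_\infty=\det L$ gives the L-space property by induction, and the correction-term inequality of \cite{os:absgr} applied to both fillings traps $d(\Sigma(L),\cdot)$ between $-d_{\cF}$ and $d_{\cC}=-d_{\cF}$. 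That outline is sound, including the standard facts you use without proof (simple connectivity of the fillings via the Akbulut--Kirby handle picture, $\cF(G^*)\cong\cC(G)$ for the mirror, $\rk\,\widehat{HF}(Y)\ge |H_1(Y;\Z)|$), modulo the usual bookkeeping in the induction (resolutions may fail to be reduced; this is exactly what the quasi-alternating formalism of \cite{os:doublecover} streamlines).

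One concrete remark on the step you flag as ``the hard part'': you do not actually need the two $\Spc$-identifications (via $X$ and via $X'$) to agree literally with the isomorphism of Corollary \ref{c: cut and cycle} in order to force equality. Both identifications are bijections onto $C(\cF(G))$ and $C(\cC(G))$ respectively, and Corollary \ref{c: cut and cycle} gives $\sum d_{\cC} = -\sum d_{\cF}$ over all classes. Hence the two families of inequalities yield
\[
\sum_{\s}\bigl(d(\Sigma(L),\s)+d_{\cF}([\chi_\s])\bigr)\;+\;\sum_{\s}\bigl(d_{\cC}([\chi'_\s])-d(\Sigma(L),\s)\bigr)\;=\;0,
\]
a sum of nonnegative terms, so every term vanishes and $d(\Sigma(L),\s)=-d_{\cF}([\chi_\s])$ for each $\s$ without any compatibility check. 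The statement that this equality is an isomorphism of pairs in the sense of Definition \ref{d: torsor isomorphism} then only requires the standard fact that, for the simply connected definite filling $X$, restriction of $\spc$ structures identifies $\Spc(\Sigma(L))$ with $\Char(\cF(G))\ (\mod 2\cF(G))$ affinely over $H^2(\Sigma(L);\Z)\cong\overline{\cF(G)}$; the gluing of the two fillings along $\Sigma(L)$, which you propose for the compatibility check, is thus not needed for the theorem as stated (though it is exactly the geometric counterpart of the lattice gluing $\cC\oplus_\varphi\cF\subset \Z^n$ used elsewhere in the paper).
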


We arrive at last to the proof of our main topological result.

\begin{proof}[Proof of Theorem \ref{t: main}.]

(1)$\implies$(2) is clear, (2)$\implies$(3) is the observation of Viro, and (3)$\implies$(4) is clear.  It stands to establish (4)$\implies$(1).  By Theorem \ref{t: osz}, the $d$-invariants of $\cF(G)$ and $\cF(G')$ are isomorphic, where $G, G'$ denote the Tait graphs of $D, D'$.  By Theorem \ref{t: main graph}, it follows that $G$ and $G'$ are 2-isomorphic, so by Corollary \ref{c: mutant graphs}, it follows that $D$ and $D'$ are mutants.  This completes the proof of the Theorem.
\end{proof}

\bibliographystyle{myalpha}
\bibliography{/Users/Josh/Desktop/Papers/References}

\end{document}